\def\draft{n}
\theoremstyle{plain}
\newtheorem{theorem}{Theorem}[section]
\newtheorem{proposition}[theorem]{Proposition}
\newtheorem{defprop}[theorem]{Definition-Proposition}
\theoremstyle{definition}
\newtheorem{definition}[theorem]{Definition}
\newtheorem{figcap}[theorem]{Figure}
\theoremstyle{remark}
\newtheorem{comment}[theorem]{Comment}
\newtheorem{discussion}[theorem]{Discussion}
\newlength{\standardunitlength}
\newcommand{\ad}{\operatorname{ad}}
\newcommand{\im}{\operatorname{im}}
\newcommand{\tr}{\operatorname{tr}}
\def\qed{{\linebreak[1]\null\hfill\text{$\Box$}}}
\newlength{\globalparindent}
\newenvironment{myitemize}{
        \begin{list}{$\bullet$}{\setlength{\leftmargin}{16pt}
        \setlength{\labelwidth}{12pt}
        \setlength{\labelsep}{4pt}}
}{
        \end{list}
}
\def\arXiv#1{{\href{http://front.math.ucdavis.edu/#1}{arXiv:\linebreak[0]#1}}}
\long\def\@makecaption#1#2{%
    \vskip 10pt
    \setbox\@tempboxa\hbox{%\ifvoid\tinybox\else\box\tinybox\fi
      \small\sf{\bfcaptionfont #1. }\ignorespaces #2}%
    \ifdim \wd\@tempboxa >\captionwidth {%
        \rightskip=\@captionmargin\leftskip=\@captionmargin
        \unhbox\@tempboxa\par}%
      \else
        \hbox to\hsize{\hfil\box\@tempboxa\hfil}%
    \fi}
\font\bfcaptionfont=cmssbx10 scaled \magstephalf
\newdimen\@captionmargin\@captionmargin=2\parindent
\newdimen\captionwidth\captionwidth=\hsize
\def\citeweb#1{{\cite{WKO4}/\href{\web/#1}{\tt #1}}}
\def\draftcut{\if\draft y \cleardoublepage \fi}
\definecolor{lightred}{RGB}{255, 217, 217}
\def\lightred#1{\setlength{\fboxsep}{0pt}\colorbox{lightred}{#1}}
\def\gray{\color{gray}}
\def\yellowm#1{{\setlength{\fboxsep}{0pt}\colorbox{yellow}{$#1$}}}
\def\yellowt#1{{\setlength{\fboxsep}{0pt}\colorbox{yellow}{#1}}}
\def\aAS{{\overrightarrow{AS}}}
\def\act{{\hspace{-1pt}\sslash\hspace{-0.75pt}}}
\def\aIHX{{\overrightarrow{IHX}}}
\def\aSTU{{\overrightarrow{STU}}}
\def\atder{\operatorname{\mathfrak{der}}}
\def\atdiv{\operatorname{div}}
\def\atsder{\operatorname{\mathfrak{sder}}}
\def\attder{\operatorname{\mathfrak{tder}}}
\def\attr{\operatorname{\mathfrak{tr}}}
\def\bbQ{{\mathbb Q}}
\def\bbR{{\mathbb R}}
\def\bbZ{{\mathbb Z}}
\def\BCH{\operatorname{BCH}}
\def\calA{{\mathcal A}}
\def\calE{{\mathcal E}}
\def\calK{{\mathcal K}}
\def\calP{{\mathcal P}}
\def\calS{{\mathcal S}}
\def\calU{{\mathcal U}}
\def\Cap{{\mathit C\!a\!p}}
\def\CW{\text{\it CW}}
\def\dA{\text{\it dA}}
\def\der{\operatorname{der}}
\def\dS{\text{\it dS}}
\def\endpar#1{~\hfill\fbox{\footnotesize\ref{#1}}}
\def\FA{\text{\it FA}}
\def\FG{\text{\it FG}}
\def\FL{\text{\it FL}}
\def\fraka{{\mathfrak a}}
\def\frakg{{\mathfrak g}}
\def\frakt{{\mathfrak t}}
\def\lie{\operatorname{\mathfrak{lie}}}
\def\PuB{{\mathit P\!u\!B}}
\def\PwB{{\mathit P\!w\!B}}
\def\remove{\setminus}
\def\SolKV{\text{SolKV}}
\def\sder{\operatorname{sder}}
\def\TAut{\operatorname{TAut}}
\def\tder{\operatorname{tder}}
\def\TW{\text{\it TW}}
\gdef\SetFigFont#1#2#3#4#5{%
  \reset@font\fontsize{#1}{#2pt}%
  \fontfamily{#3}\fontseries{#4}\fontshape{#5}%
  \selectfont}%
\gdef\SetFigFont#1#2#3#4#5{%
  \reset@font\fontsize{#1}{#2pt}%
  \fontfamily{#3}\fontseries{#4}\fontshape{#5}%
  \selectfont}%
\def\wTFo{{\mathit w\!T\!F^o}}
\def\imagetop#1{\vtop{\null\hbox{#1}}}
\def\begin{picture}(0,0)%
\gdef\SetFigFont#1#2#3#4#5{%
  \reset@font\fontsize{#1}{#2pt}%
  \fontfamily{#3}\fontseries{#4}\fontshape{#5}%
  \selectfont}%
\gdef\SetFigFont#1#2#3#4#5{%
  \reset@font\fontsize{#1}{#2pt}%
  \fontfamily{#3}\fontseries{#4}\fontshape{#5}%
  \selectfont}%
\def\begin{picture}(0,0)%
\gdef\SetFigFont#1#2#3#4#5{%
  \reset@font\fontsize{#1}{#2pt}%
  \fontfamily{#3}\fontseries{#4}\fontshape{#5}%
  \selectfont}%
\gdef\SetFigFont#1#2#3#4#5{%
  \reset@font\fontsize{#1}{#2pt}%
  \fontfamily{#3}\fontseries{#4}\fontshape{#5}%
  \selectfont}%
\def\begin{picture}(0,0)%
\gdef\SetFigFont#1#2#3#4#5{%
  \reset@font\fontsize{#1}{#2pt}%
  \fontfamily{#3}\fontseries{#4}\fontshape{#5}%
  \selectfont}%
\gdef\SetFigFont#1#2#3#4#5{%
  \reset@font\fontsize{#1}{#2pt}%
  \fontfamily{#3}\fontseries{#4}\fontshape{#5}%
  \selectfont}%
\def\shortmathinclude#1{{%
  \newline\vspace{0mm}
  {\imagetop{}\
  \imagetop{\includegraphics[scale=0.16]{ComputerTalk/#1.eps}}}
  \newline\vskip 1mm
}}
\def\mathinclude#1{{\label{C:#1}%
  \newline\vspace{0mm}
  {\imagetop{}\
  \imagetop{\includegraphics[scale=0.16]{ComputerTalk/#1.eps}}}
  \newline\vskip 1mm
}}
\def\shortdialoginclude#1{{\label{C:#1}%
  \noindent
  \imagetop{}\
\imagetop{\includegraphics[scale=0.16]{ComputerTalk/Human-#1.eps}}
  \newline
  \vskip 1mm\noindent
  \imagetop{}\
\imagetop{\includegraphics[scale=0.16]{ComputerTalk/Machine-#1.eps}}
  \newline
}}
\def\shortdialogincludewithlink#1{{\label{C:#1}%
  \noindent
  \imagetop{}\
\imagetop{\includegraphics[scale=0.16]{ComputerTalk/Human-#1.eps}}
  \newline
  \vskip 1mm\noindent
  \imagetop{}\
\imagetop{\includegraphics[scale=0.16]{ComputerTalk/Machine-#1.eps}}
  \hfill\parbox[t]{1in}{
    \rightline{Fuller output:}
    \rightline{\citeweb{#1.nb}}
  }
  \newline
}}
\def\dialoginclude#1{{\label{C:#1}%
  \noindent
  \imagetop{}\
\imagetop{\includegraphics[scale=0.16]{ComputerTalk/Human-#1.eps}}
  \newline
  \vskip 1mm\noindent
  \imagetop{}\
\imagetop{\includegraphics[scale=0.16]{ComputerTalk/Machine-#1.eps}}
  \newline
}}
\def\dialogincludewithlink#1{{\label{C:#1}%
  \noindent
  \imagetop{}\
\imagetop{\includegraphics[scale=0.16]{ComputerTalk/Human-#1.eps}}
  \newline
  \vskip 1mm\noindent
  \imagetop{}\
\imagetop{\includegraphics[scale=0.16]{ComputerTalk/Machine-#1.eps}}
  \hfill\hspace{-1in}\parbox[t]{1in}{
    \rightline{Fuller output:}
    \rightline{\citeweb{#1.nb}}
  }
  \newline
}}
\def\ACQuote#1{{\label{C:#1}%
  \newline
  \null%
  \imagetop{\includegraphics[width=6mm]{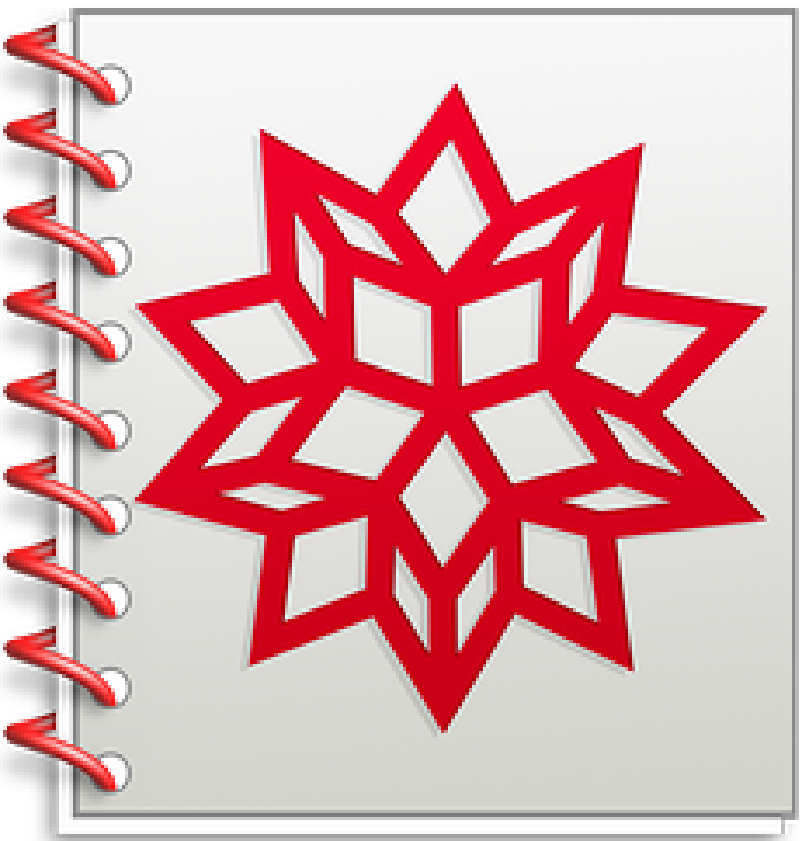}}%
  \raisebox{-2mm}{\href{\web/AwCalculus.m}{\tiny\tt AC}}%
  \ \imagetop{\includegraphics[scale=0.16]{ComputerTalk/AC#1.eps}}%
  \newline
}}
\def\FLQuote#1{{\label{C:#1}%
  \newline
  \null%
  \imagetop{\includegraphics[width=6mm]{figs/NotebookIcon.eps}}%
  \raisebox{-2mm}{\href{\web/FreeLie.m}{\tiny\tt FL}}%
  \ \imagetop{\includegraphics[scale=0.16]{ComputerTalk/FL#1.eps}}%
  \newline
}}
\def%
\gdef\SetFigFont#1#2#3#4#5{%
  \reset@font\fontsize{#1}{#2pt}%
  \fontfamily{#3}\fontseries{#4}\fontshape{#5}%
  \selectfont}%
\gdef\SetFigFont#1#2#3#4#5{%
  \reset@font\fontsize{#1}{#2pt}%
  \fontfamily{#3}\fontseries{#4}\fontshape{#5}%
  \selectfont}%
\def%
\gdef\SetFigFont#1#2#3#4#5{%
  \reset@font\fontsize{#1}{#2pt}%
  \fontfamily{#3}\fontseries{#4}\fontshape{#5}%
  \selectfont}%
\gdef\SetFigFont#1#2#3#4#5{%
  \reset@font\fontsize{#1}{#2pt}%
  \fontfamily{#3}\fontseries{#4}\fontshape{#5}%
  \selectfont}%
\def%
\gdef\SetFigFont#1#2#3#4#5{%
  \reset@font\fontsize{#1}{#2pt}%
  \fontfamily{#3}\fontseries{#4}\fontshape{#5}%
  \selectfont}%
\gdef\SetFigFont#1#2#3#4#5{%
  \reset@font\fontsize{#1}{#2pt}%
  \fontfamily{#3}\fontseries{#4}\fontshape{#5}%
  \selectfont}%
\def%
\gdef\SetFigFont#1#2#3#4#5{%
  \reset@font\fontsize{#1}{#2pt}%
  \fontfamily{#3}\fontseries{#4}\fontshape{#5}%
  \selectfont}%
\gdef\SetFigFont#1#2#3#4#5{%
  \reset@font\fontsize{#1}{#2pt}%
  \fontfamily{#3}\fontseries{#4}\fontshape{#5}%
  \selectfont}%
\def%
\def%
\def%
\def\ds{\displaystyle}
\def\inlineeq{\refstepcounter{equation}{\rm(\theequation)}}
\def\ob#1{\overbracket[0.5pt][1pt]{#1}}
\def\smallunderbrace#1{\mathop{\vtop{\m@th\ialign{##\crcr
   $\hfil\displaystyle{#1}\hfil$\crcr
   \noalign{\kern3\p@\nointerlineskip}%
   \tiny\upbracefill\crcr\noalign{\kern3\p@}}}}\limits}
\def\glosm#1#2{{\label{g:#1}\yellowm{#2}}}
\def\glosi#1#2#3{{\item[{#2}] #3~\hfill\pageref{g:#1}}}
\newcounter{tunnel}
\begin{document}
\newdimen\captionwidth\captionwidth=\hsize
\setcounter{secnumdepth}{4}

\title{{Finite Type Invariants of w-Knotted Objects IV: Some Computations}}

\author{Dror~Bar-Natan}
\address{
  Department of Mathematics\\
  University of Toronto\\
  Toronto Ontario M5S 2E4\\
  Canada
}
\email{drorbn@math.toronto.edu}
\urladdr{http://www.math.toronto.edu/~drorbn}

\date{First edition Nov.\ 15, 2015, this edition Nov.~17,~2015. Electronic version
and related files at~\cite{WKO4}, \url{\web}. The \arXiv{????.????}
edition may be older}

\subjclass[2010]{57M25}
\keywords{
  w-knots,
  w-tangles,
  Kashiwara-Vergne,
  associators,
  double tree,
  Mathematica,
  free Lie algebras%
}

\thanks{This work was partially supported by NSERC grant RGPIN 262178.}

\begin{abstract}
  In the previous three papers in this series, \cite{WKO1}--\cite{WKO3},
Z.~Dancso and I studied a certain theory of ``homomorphic expansions'' of
``w-knotted objects'', a certain class of knotted objects in 4-dimensional
space. When all layers of interpretation are stripped off, what remains
is a study of a certain number of equations written in a family of spaces
${\mathcal A}^w$, closely related to degree-completed free Lie algebras
and to degree-completed spaces of cyclic words.

The purpose of this paper is to introduce mathematical and computational
tools that enable explicit computations (up to a certain degree)
in these ${\mathcal A}^w$ spaces and to use these tools to solve the
said equations and verify some properties of their solutions, and as
a consequence, to carry out the computation (up to a certain degree)
of certain knot-theoretic invariants discussed in \cite{WKO1}--\cite{WKO3}
and in my related paper \cite{KBH}.

\end{abstract}

\maketitle

\setcounter{tocdepth}{3}
\tableofcontents

\draftcut\section{Introduction} Within the previous three
papers in this series~\cite{WKO1}--\cite{WKO3}\footnoteT{Also within
my~\cite{KBH}, and within papers by Alekseev, Enriquez,
and Torossian~\cite{AT,
AlekseevEnriquezTorossian:ExplicitSolutions}, and within
Kashiwara's and Vergne's~\cite{KashiwaraVergne:Conjecture},
and also within many older papers about Drinfel'd associators
(e.g. Drinfel'd's~\cite{Drinfeld:QuasiHopf, Drinfeld:GalQQ} and my
\cite{Bar-Natan:NAT}.} a number of intricate equations written in various
graded spaces related to free Lie algebras and to spaces of cyclic words
were examined in detail, for good reasons that were explained there and
elsewhere. The purpose of this paper is to introduce mathematical tools
(on the upper parts of pages) and computational tools (on the lower
parts of pages, below the bold dividing lines\footnotemarkC) that allow for
the explicit solution of these equations, at least up to a certain degree.

\footnotetextC{
If you are not interested in the actual computations, it is safe to
ignore the parts of pages below the bold dividing lines and restrict to
``strict'' mathematics, which is always above these lines. {\bf Alert.} If
you are interested in the computations, note that the computational
footnotes are sometimes long and crawl across page boundaries. This
footnote is the first example.

The programs described in this paper were written in
Mathematica~\cite{Wolfram:Mathematica} and are available at~\cite{WKO4}.
Before starting with any computations, download the packages
\href{\web/FreeLie.m}{\tt FreeLie.m} and
\href{\web/AwCalculus.m}{\tt AwCalculus.m} and type within Mathematica:
(the interactive Mathematica session demonstrated in this paper is
available as \citeweb{WKO4Session.nb})

\dialoginclude{Initialization} \rule{0in}{6pt}
% For unknown reasons the \rule above prevents a page break here.

The last input (``human'') line above declares that by default we wish the
computer to print series within graded spaces (such as free Lie algebras)
to degree 4. Note that we \lightred{highlight in pink} input lines that
affect later computations.
}

The equations we have in mind arise in other papers and appear throughout
this paper. Yet to help our impatient readers orient themselves,
Figure~\ref{fig:flash} contains a ``flash summary'' of the most important
equations and their topological and algebraic significance.

\begin{figure}
\[
  \def\YB{$R^{12}R^{13}R^{23}=R^{23}R^{13}R^{12}$}
  \def\RFour{$R^{23}R^{13}V=R^{12,3}$}
  \def\RFourAT{{\cite{AT}:
    $F(x+y)=\log e^xe^y$
  }}
  \def\UniCap{$VV^\ast=1;\quad VC^{12}=C^1C^2$}
  \def\UniCapAT{{\cite{AT}:
    $j(F)\in\im(\tilde{\delta})$
  }}
  \def\Twist{$\Theta=V^{-1}RV^{21}$}
  \def\Pentagon{$\Phi\Phi^{1,23,4}\Phi^{234}=\Phi^{12,3,4}\Phi^{1,2,34}$}
  \def\Buckle{$(\Phi^{-1})^{13,2,4}\Phi^{132}R^{23}\Phi^{-1}\Phi^{12,3,4}$}
  \input{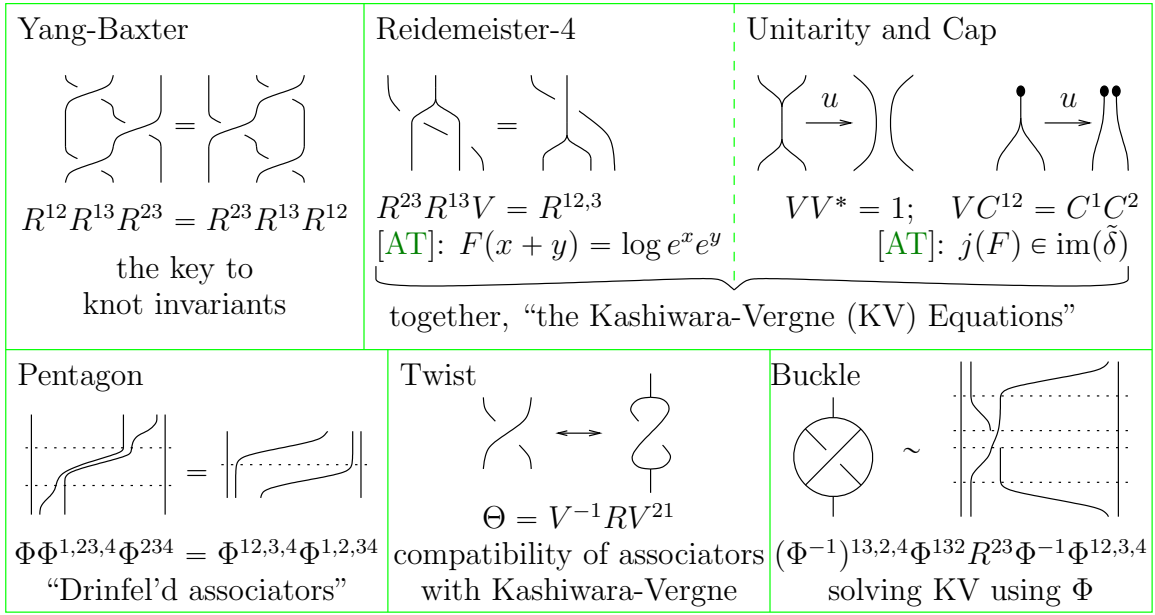}
\]
\caption{The most important equations.} \label{fig:flash}
\end{figure}

Why bother? What do limited explicit computations add, given that these
intricate equations are known to be soluble, and given that the conceptual
framework within which these equations make sense is reasonably well
understood~\cite{WKO1}--\cite{WKO3}? My answers are three:

\begin{enumerate}[leftmargin=*,labelindent=0pt]

\item Personally, my belief in what I can't compute decays quite rapidly
as a function of the complexity involved. Even if the overall picture is
clear, the details will surely go wrong, and sooner or later, something
bigger than a detail will go wrong. Even a limited computation may
serve as a wonderful sanity check. In situations such as ours, where
many signs and conventions need to be decided and may well go wrong,
even a low-degree computation increases my personal confidence level by a
great degree. Given computations that work to degree 6 (say), it is hard
to imagine that a detail was missed or that conventions were established
in an inconsistent manner. In fact, if the computer programs are clear
enough and are shown to work, these programs become the authoritative
declarations of the details and conventions.

\item The computational tools introduced here may well be useful in other
contexts where free Lie algebras and/or cyclic words arise.

\item The papers~\cite{WKO1,WKO2} (and likewise~\cite{KBH}) are about
equations, but even more so, about the construction of certain knot and
tangle invariants. With the tools presented here, the invariants of
arbitrary knotted objects of the types studied in~\cite{WKO1,WKO2,KBH}
may be computed.

\end{enumerate}

The equations of~\cite{WKO1}--\cite{WKO3} always involve group-like,
or ``exponential'' elements, and are written in some spaces of ``arrow
diagrams'' that go under the umbrella name $\calA^w$. Hence a crucial
first step is to find convenient presentations for the group-like
elements $\calA^w_{\exp}$ in $\calA^w$-spaces. It turns out that there
are (at least) two such presentations, each with its own advantages and
disadvantages. Hence in Section~\ref{sec:Aw} we recall $\calA^w$ briefly
(\ref{subsec:Aw}), then discuss some free-Lie-algebra preliminaries
(\ref{subsec:FL}), then describe the Alekseev-Torossian-\cite{AT}-inspired
``lower-interlaced'' presentation $E_l$ of $\calA^w_{\exp}$
(\ref{subsec:AT}), then describe the \cite{KBH}-inspired ``factored''
presentation $E_f$ of $\calA^w_{\exp}$ and its stronger precursor
``split'' presentation $E_s$ (\ref{subsec:Ef}), and then describe how
to convert between the two primary presentations (\ref{subsec:Conversion}).

We then present our computations in Section~\ref{sec:Computations}: Some
knot and tangle invariants are computed in
Section~\ref{subsec:TangleInvariants} and solutions of the Kashiwara-Vergne
(KV) equations in Section~\ref{subsec:SolKV}. In Section~\ref{subsec:tau} we
discuss the ``Twist Equation'' and compute dimensions of spaces of
solutions of the linearized KV equations, with and without the Twist
Equation. In Section~\ref{subsec:Associators} we compute a Drinfel'd
associator, in Section~\ref{subsec:wAssociators} we compute associators in
$\calA^w$ starting from a solution of the KV equations, and in
Section~\ref{subsec:KVandAssoc} we show how to compute a solution of KV
from a Drinfel'd associator. The last computational result is in
Section~\ref{subsec:Trivolution}, where we give computational support to
the existence of an action of the symmetric group $S_4$ on the set of
solutions of the Kashiwara-Vergne Equations.

\begin{figure}
\[ \xymatrix@C=0.8in@R=0.8in{
  \left\{\TW_l(S)\right\}
    \ar@`{p+(16,16),p+(-16,16)}_{\ast,{\gray dm},\ldots}
    \ar@/^/[r]^\Gamma
    \ar[d]^{E_l}
  & \left\{\TW_s(S)\right\}
    \ar@`{p+(16,16),p+(-16,16)}_{\ast,\#,dm,\ldots}
    \ar@/^/[l]^{\Lambda=\Gamma^{-1}}
    \ar@{^{(}->}[r]
    \ar[d]^{E_s}
    \ar[dl]^(0.4){E_f}
  & \left\{\TW_s(H;T)\right\}
    \ar@`{p+(16,16),p+(-16,16)}_{\ast,\#,dm,hm,tm,tha,\ldots}
    \ar[d]^{E_s}
  \\
  \left\{\calA^w_{\exp}(S)\right\}
    \ar@`{p+(-16,-16),p+(16,-16)}_{\ast,dm,\ldots}
    \ar@/^/[r]^{\delta}
  & \left\{\calA^w_{\exp}(S;S)\right\}
    \ar@`{p+(-16,-16),p+(16,-16)}_{\ast,\#,dm,\ldots}
    \ar@/^/[l]^{\delta^{-1}}
    \ar@{^{(}->}[r]
  & \left\{\calA^w_{\exp}(H;T)\right\}
    \ar@`{p+(-16,-16),p+(16,-16)}_{\ast,\#,dm,hm,tm,tha,\ldots}
} \]
\caption{The main spaces and maps appearing in this paper.}
\label{fig:diagram}
\end{figure}
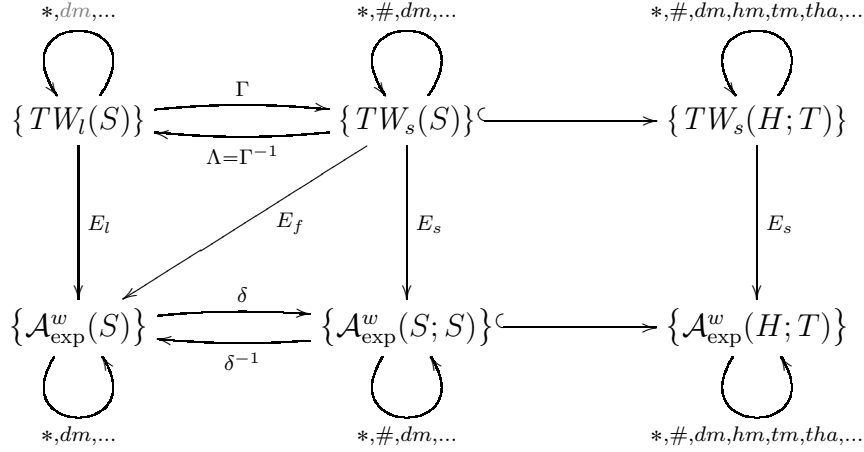

We conclude this introduction with a description of the commutative
diagram in Figure~\ref{fig:diagram} which displays the main
spaces and maps appearing in this paper, as described in detail in
Section~\ref{sec:Aw}. The bottom row of this diagram consists of spaces of
``group-like'' elements inside spaces $\calA^w$ of ``arrow diagrams'';
these are the spaces that have direct knot-theoretic significance. The
top row are spaces of ``trees and wheels'', or more precisely, various
elements of free Lie algebras and various cyclic words. They are the
spaces of ``primitives'' corresponding to the group-like elements at the
bottom, via various ``exponentiation'' maps $E_l$, $E_f$, and $E_s$. In
this paper we study\footnoteC{Or ``implement'', in computer-speak.} the
spaces on the bottom row by means of their presentations by elements in
the top row.

The collection $\left\{\calA^w_{\exp}(S)\right\}$ of spaces
we primarily wish to study (and in which most of the equations of
Figure~\ref{fig:flash} are written) appears on the bottom left. There
are many binary and unary operations acting on the spaces within
$\left\{\calA^w_{\exp}(S)\right\}$ as indicated by the circular
self-arrow appearing there, which is labelled with the most important
of these operations, the binary $\ast$ and the unary $dm$. On the top
left of the diagram are the spaces $\left\{\TW_l(S)\right\}$ of trees
and wheels which represent $\left\{\calA^w_{\exp}(S)\right\}$
via the $E_l$ presentation. The same collection of operations acts here
too, though notice that the operation $dm$ is grayed-out, because we
have no direct implementation for it in $\TW_l$ language.

On the bottom right is a bigger collection of spaces,
$\left\{\calA^w_{\exp}(H;T)\right\}$, which contains as a subset the
collection $\left\{\calA^w_{\exp}(S;S)\right\}$ (bottom middle), which
is isomorphic in a non-trivial manner (via $\delta$ and $\delta^{-1}$) to
$\left\{\calA^w_{\exp}(S)\right\}$. A richer collection of
operations act on $\left\{\calA^w_{\exp}(H;T)\right\}$, and the most
important of those are $\ast$, $\#$, $dm$, $hm$, $tm$, and $tha$. 

On the top right is the collection $\left\{\TW_s(H;T)\right\}$ of spaces
of trees and wheels which represent $\left\{\calA^w_{\exp}(H;T)\right\}$
via the $E_s$ presentation. When restricted to $H=T=S$,
this is the collection $\left\{\TW_s(S)\right\}$ representing
$\left\{\calA^w_{\exp}(S;S)\right\}$, and representing our primary
interest $\left\{\calA^w_{\exp}(S)\right\}$ via $E_f$, the
composition of $E_s$ with $\delta^{-1}$.

Note that $\TW_l$ and $\TW_s$ are set-theoretically the same spaces
of trees and wheels. Yet the operations $\ast$, $dm$, etc.\ act on
them in a different manner, and hence they deserve to have different
names\footnoteT{Much as in group theory, a direct product $N\times H$
is set-theoretically the same as a semi-direct product $N\rtimes H$, yet
it is wrong to refer to them by the same name.}. Note also that $\TW_l$
and $\TW_s$ are in fact isomorphic via structure-preserving isomorphisms
(denoted $\Gamma$ and $\Lambda=\Gamma^{-1}$). These isomorphisms are
compositions of the relatively simple-minded $\delta$ and $\delta^{-1}$
with the more complex ``exponentiations'' $E_l$ and $E_s$ and their
inverses. Thus the isomorphisms $\Gamma$ and $\Lambda$ are non-linear
and quite complicated.

  \parpic[l]{\,%
  \,\,
  \hspace{-6pt}}%
  \noindent%
 We will occasionally comment on the relationship between
the constructs appearing in this papers and three related topics:
``topology'', or more precisely certain aspects of the theory of
2-knots, ``Lie theory'', or more precisely certain classes of formulas
that make sense in arbitrary finite-dimensional Lie algebras, and
``Alekseev-Torossian'', or more precisely, issues related to the
paper~\cite{AT}. These comments will in general be incomplete and should
be regarded as ``hints for the already initiated'' --- people familiar
with the papers \cite{WKO1, WKO2, WKO3, KBH, AT} will hopefully find
that these comments help to put the current paper in context. These
comments will always be labelled by one (or more) of the three logos at
the head of this paragraph, which correspond, in order, to ``topology'',
``Lie theory'', and ``Alekseev-Torossian''.

  \parpic[l]{%
  \hspace{-6pt}}%
  \noindent%
 Within the study of simply-knotted (ribbon) 2-knots, or more
precisely w-knotted-objects as they
appear in \cite{WKO1, WKO2, KBH}, the rows of
Figure~\ref{fig:diagram} correspond to the extra row
\[ \xymatrix@C=0.8in{
  \left\{\calK^w(S)\right\}
    \ar@`{p+(-16,-16),p+(16,-16)}_{\ast,dm,\ldots}
    \ar@/^/[r]^{\delta}
  & \left\{\calK^w(S;S)\right\}
    \ar@`{p+(-16,-16),p+(16,-16)}_{\ast,\#,dm,\ldots}
    \ar@/^/[l]^{\delta^{-1}}
    \ar@{^{(}->}[r]
  & \left\{\calK^w(H;T)\right\},
    \ar@`{p+(-16,-16),p+(16,-16)}_{\ast,\#,dm,hm,tm,tha,\ldots}
} \]
via the ``associated graded'' procedure
described in~\cite{WKO2}. Here $\calK^w(S)$ is the set of
$S$-labelled w-tangles~\cite{WKO2}, $\calK^w(H;T)$ is the set of w-knotted
$H$-labelled hoops and $T$-labelled balloons~\cite{KBH}, $\calK^w(S;S)$ is
the same but with $H=T=S$, and $\delta$ is the same as in~\cite{KBH}. This
correspondence is further recalled throughout the rest of this paper.

  \parpic[l]{%
  \hspace{-6pt}}%
  \noindent%
 The corresponding Lie-theoretic spaces (compare
\cite[Section~\ref{1-subsec:LieAlgebras}]{WKO1}) are
\[ \xymatrix@C=0.8in{
  \left\{\calU(I\frakg)^{\otimes S}\right\}
    \ar@`{p+(-16,-16),p+(16,-16)}_{\ast,dm,\ldots}
    \ar@/^/[r]^-{\delta}
  & \left\{\calU(\frakg)^{\otimes S}\otimes\calS(\frakg^\ast)^{\otimes S}\right\}
    \ar@`{p+(-16,-16),p+(16,-16)}_{\ast,\#,dm,\ldots}
    \ar@/^/[l]^-{\delta^{-1}}
    \ar@{^{(}->}[r]
  & \left\{\calU(\frakg)^{\otimes H}\otimes\calS(\frakg^\ast)^{\otimes T}\right\}.
    \ar@`{p+(-16,-16),p+(16,-16)}_{\ast,\#,dm,hm,tm,tha,\ldots}
} \]
This correspondence is further recalled throughout the rest of this paper.

  \parpic[l]{%
  \hspace{-8pt}}%
  \noindent%
 In~\cite{AT} there is no good counterparts for last two columns of our
diagram. The counterpart of the first (and primary) column is a mixture
$\hat{\calU}((\fraka_n\oplus\attder_n)\ltimes\attr_n)$ containing the most
important spaces occurring in~\cite{AT}. More in the next section.

\subsection{Acknowledgement} This paper was written almost entirely with
Z.~Dancso in the room (physically or virtually via Skype), working on
various parts of our joint series \cite{WKO1}--\cite{WKO3}. Hence her
indirect contribution to it, in a huge number of routine consultations,
should be acknowledged in capitals: THANKS, ZSUZSI. I would like
to further thank A.~Alekseev and S.~Morgan for their comments and
suggestions.

\draftcut
\needspace{3\baselineskip}
\section{Group-like elements in $\calA^w$} \label{sec:Aw}

\subsection{A brief review of $\calA^w$} \label{subsec:Aw} Let
$\glosm{S}{S} = \{\glosm{a}{a}_1,a_2,\dots\}$\footnoteT{Yellow highlighting
corresponds to the \yellowt{glossary}, Section~\ref{sec:glossary}.}
be a finite set of ``strand labels''. The space
$\glosm{calAw}{\calA^w}(S)$ is the completed
graded vector space\footnoteT{For simplicity we always work over $\bbQ$.}
of diagrams made of (vertical) ``strands'' labelled by the elements of $S$,
and ``arrows'' as summarized by the following picture:
\[ 
  \def\comments{{\parbox{2.28in}{\scriptsize
    \begin{myitemize}
      \item Diagrams are connected.
      \item Vertices are 2-in 1-out.
      \item Vertices are oriented.
      \item Degree is half the number of trivalent vertices.
      \item The ``skeleton'' is a union of vertical strands labelled by the
        elements of $S$.
    \end{myitemize}
  }}}
  \label{g:wRels}
  \input{figs/AwSummary.pstex_t}
\]

When $S=\{1,2,\ldots,n\}$ we abbreviate
$\calA^w(\glosm{uparrow}{\uparrow_n})\coloneqq\calA^w(S)$.

  \parpic[l]{\,%
  \,\,
  \hspace{-6pt}}%
  \noindent%
 In topology, elements of $\calA^w(S)$ are closely related
to (finite type invariants of) simply knotted 2-dimensional tubes in
$\bbR^4$ (\cite{WKO1}--\cite{WKO3}, \cite{KBH}). In Lie theory, they
represent ``universal'' $\glosm{frakg}{\frakg}$-invariant tensors in
$\glosm{calU}{\calU}(I\frakg)^{\otimes S}$, where $\glosm{Ifrakg}{I\frakg}
\coloneqq \frakg\ltimes\frakg^\ast$\footnoteT{In earlier papers we have
used the order $I\frakg=\frakg^\ast\rtimes\frakg$.} and $\frakg$ is some
finite dimensional Lie algebra (\cite{WKO1}--\cite{WKO3}).  Readers of
Alekseev and Torossian~\cite{AT} may care about $\calA^w$ because
using notation from~\cite{AT}, $\calA^w(\uparrow_n)$ is the completed
universal enveloping algebra of $(\fraka_n\oplus\attder_n)\ltimes\attr_n$
(see~\cite{WKO2}), and hence much of the \cite{AT} story can be told
within $\calA^w$. Several significant Lie theoretic problems (e.g., the
Kashiwara-Vergne problem,~\cite{KashiwaraVergne:Conjecture, AT, WKO2})
can be interpreted as problems about $\calA^w(\uparrow_n)$.

\begin{comment} \label{com:SortedForm} Using the $\aSTU_2$ relation
one may sort the skeleton vertices in every $\glosm{D}{D}\in\calA^w(S)$
so that along every skeleton component all arrow heads appear ahead
of all arrow tails, and by a diagrammatic analogue of the PBW theorem
(compare~\cite[Theorem~8]{Bar-Natan:OnVassiliev}), this sorted form is
unique modulo $\aSTU_1$, $TC$, $\aAS$ and $\aIHX$ relations.
\end{comment}

\begin{definition} \label{def:Operations}
A number of operations are defined on elements of the $\calA^w(S)$
spaces:

\vskip \topsep
\begin{enumerate}[leftmargin=*,labelindent=0pt,topsep=0pt]

\parpic[r]{\input{figs/Union.pstex_t}}
\item If $S_1$ and $S_2$ are disjoint, then given
$D_1\in\calA^w({S_1})$ and $D_2\in\calA^w({S_2})$,
their union $D_1D_2=D_1\glosm{sqcup}{\sqcup} D_2\in\calA^w(S)$, where
$S=S_1\sqcup S_2$,
is obtained by placing them side by side as illustrated on the right.

\vskip 1mm %
  \parpic[l]{\,%
  \hspace{-6pt}}%
  \noindent%
 In topology, $\sqcup$ corresponds to the disjoint
union of 2-tangles\footnoteT{To be clear, the ``$2$'' in ``2-tangles''
refers to the dimension of the things being knotted, and not to
the number of components.}.  In Lie theory, it corresponds to the
map $\calU(I\frakg)^{\otimes S_1}\otimes\calU(I\frakg)^{\otimes
S_2}\to\calU(I\frakg)^{\otimes(S_1\sqcup S_2)}$.

\vskip 2mm
\item Given $D_1\in\calA^w(S)$ and
$D_2\in\calA^w(S)$, their product
$D_1\glosm{ast}{\ast} D_2\in\calA^w(S)$ is obtained by ``stacking $D_2$
on top of $D_1$'':
\begin{equation} \label{eq:TubeProduct}
  (D_1,D_2)=
  \begin{array}{c}\input{figs/Stacking.pstex_t}\end{array}
  =D_1\ast D_2.
\end{equation}

  \parpic[l]{\,%
  \,\,
  \hspace{-6pt}}%
  \noindent%
 In topology, the stacking product corresponds to the
concatenation operation on knotted tubes, akin to the standard
stacking product of tangles. In Lie theory, it comes from the
algebra structure of $\calU(I\frakg)^{\otimes S}$. In~\cite{AT},
it is the product of the completed universal enveloping algebra
$\hat{\calU}((\fraka_n\oplus\attder_n)\ltimes\attr_n)$.

\setcounter{tunnel}{\theenumi}
\end{enumerate}

\noindent Note that below and throughout this paper we use
$\glosm{act}{\act}$ for postfix operator application and for ``composition
done right''. Meaning that $x\act f$ is equivalent to $f(x)$ and $f\act g$
is $g\circ f$ is ``do $f$ then do $g$''.

\begin{enumerate}[leftmargin=*,labelindent=0pt]\setcounter{enumi}{\thetunnel}

\vskip 2mm
\parpic[r]{
  \def\eps{{$\act(d\eta^1,d\eta^2,d\eta^3)$}}
  \input{figs/depsilon.pstex_t}
}
\item Given $D\in\calA^w(S)$ and $a\in S$, $D\act
\glosm{deta}{d\eta}^a$ is the result of deleting strand $a$ from $D$
and mapping it to $0$ if any arrow connects to $a$, as illustrated on
the right.

\vskip 1mm
  \parpic[l]{\,%
  \hspace{-6pt}}%
  \noindent%
 In topology, $d\eta^a$ is the removal of one component from a
2-tangle. In Lie theory it corresponds to the co-unit
$\glosm{eta}{\eta}\colon\calU(I\frakg)\to\bbQ$.

\Needspace{2cm}
\setcounter{tunnel}{\theenumi}
\end{enumerate}
\begin{enumerate}[leftmargin=*,labelindent=0pt]\setcounter{enumi}{\thetunnel}

\vskip 2mm
\parpic[r]{
  \def\dAAA{{$\act(\dA^1,\dA^2,\dA^3)$}}
  \input{figs/dA.pstex_t}
}
\item Given $D\in\calA^w(S)$ and $a\in S$, $D\act
\glosm{dA}{\dA}^a$ is the result of ``flipping over stand $a$ and
multiplying by a $(-)$ sign for each arrow whose head connects to $a$'', as
illustrated above. We denote by $\dA$ the operation of likewise flipping
(with signs) {\em all} strands: $\dA=\dA^S\coloneqq\prod_{a\in S}\dA^a$.

\vskip 1mm
  \parpic[l]{\,%
  \hspace{-6pt}}%
  \noindent%
 In topology, $\dA^a$ is the reversal of the 1D orientation
of a knotted tube \cite{WKO2}. In Lie theory, it is the antipode of
$\calU(I\frakg)$ combined with the sign reversal $\varphi\to-\varphi$
acting on the $\frakg^\ast$ factor of $I\frakg$. When elements of
$\calU(I\frakg)^{\otimes S}$ are interpreted as differential operators
acting on functions on $\frakg^S$, $\dA$ corresponds to the $L^2$ adjoint.

\Needspace{3cm}
\setcounter{tunnel}{\theenumi}
\end{enumerate}
\begin{enumerate}[leftmargin=*,labelindent=0pt]\setcounter{enumi}{\thetunnel}

\vskip 2mm
\parpic[r]{
  \def\dSSS{{$\act(\dS^1,\dS^2,\dS^3)$}}
  \input{figs/dS.pstex_t}
}
\item Similarly, $D\act \glosm{dS}{\dS}^a$ is the result of ``flipping over
stand $a$ and multiplying by a $(-)$ sign for each arrow head or tail
that connects to $a$'', as illustrated above\footnoteT{The letter $S$
is used here for both ``a set of strands'' and ``an operation similar
to an antipode''. Hopefully no confusion will arise.}.

\vskip 1mm
  \parpic[l]{\,%
  \hspace{-6pt}}%
  \noindent%
 In topology, $\dS^a$ is the reversal of both the 1D and the 2D
orientation of a knotted tube \cite{WKO2}. In Lie theory, it is the
antipode of $\calU(I\frakg)$.

\vskip 2mm
\parpic[r]{
  \def\dm{{$\act dm^{23}_2$}}
  \input{figs/dm.pstex_t}
}
\item Given $D\in\calA^w(S)$, given $a,b\in S$, and
given $c\not\in S\remove\{a,b\}$, $D\act \glosm{dm}{dm}^{ab}_c$ is the
result of ``stitching strands $a$ and $b$ and calling the resulting
strand $c$'', as illustrated on the right.

\vskip 1mm
  \parpic[l]{\,%
  \hspace{-6pt}}%
  \noindent%
 In topology, $dm^{ab}_c$ is the ``internal stitching''
of two tubes within a single 2-link, akin to the ``stitching''
operation that combines two strands of an ordinary tangle into a
single ``longer'' one. In Lie theory, it is an ``internal product''
$\calU(I\frakg)^{\otimes n}\to\calU(I\frakg)^{\otimes(n-1)}$ which
``merges'' two factors within $\calU(I\frakg)^{\otimes n}$.

\vskip 2mm
\parpic[r]{
  \def\dD{{$\act d\Delta^2_{2'2''}$}}
  \input{figs/dD.pstex_t}
}
\item Given $D\in\calA^w(S)$, given $a\in S$, and
given $b,c\not\in S\remove a$, $D\act\glosm{dDelta}{d\Delta}^a_{bc}$ is
the result of ``doubling'' strand $a$, calling the resulting ``daughter
strands'' $b$ and $c$, and summing over all ways of lifting the arrows
that were connected to $a$ to either $b$ or $c$ (so if there are $k$
arrows connected to $a$, $D\act d\Delta^a_{bc}$ is a sum of $2^k$
diagrams).

\vskip 1mm
  \parpic[l]{\,%
  \,\,
  \hspace{-6pt}}%
  \noindent%
 In topology, $d\Delta$ is the operation of ``doubling''
one component in a 2-link. In Lie theory, it is the co-product
$\glosm{Delta}{\Delta}\colon\calU(I\frakg)\to\calU(I\frakg)^{\otimes 2}$
acting on the $a$ factor in $\calU(I\frakg)^{\otimes S}$, extended by
the identity acting on all other factors. In~\cite{AT}, it is the coface
maps of \cite[Example~3.14]{AT}.

\setcounter{tunnel}{\theenumi}
\end{enumerate}
\begin{enumerate}[leftmargin=*,labelindent=0pt]\setcounter{enumi}{\thetunnel}

\item Finally, the operation $\glosm{dsigma}{d\sigma}^a_b\colon
\calA(S)\to \calA({S\remove\{a\}\sqcup\{b\}})$ does
nothing but renaming the strand $a$ to $b$ (assuming $a\in S$ and
$b\not\in S\remove\{a\}$).  \endpar{def:Operations}

\end{enumerate}

\end{definition}

We note that the product operation $(D_1,D_2)\mapsto D_1\ast D_2$ can be
implemented using the union operation $\sqcup$, the stitching
operation $dm$, and some renaming --- namely, if $\bar{S}=\{\bar{a}\colon
a\in S\}$ is some set of ``temporary'' labels disjoint from $S$ but in
a bijection with $S$, then
\begin{equation} \label{eq:multiplem} D_1\ast D_2 = 
  \left(
    D_1\sqcup\left(D_2\act\prod_ad\sigma^a_{\bar{a}}\right)
  \right)\act\prod_adm^{a\bar{a}}_a.
\end{equation}
Therefore below we will sometimes omit the implementation of
$(D_1,D_2)\mapsto D_1D_2$ provided all other operations are implemented.

We note that $\calA^w(S)$ is a co-algebra, with the co-product
$\glosm{Box}{\Box}(D)$, for a diagram $D$ representing an element of
$\calA^w(S)$, being the sum of all ways of dividing $D$ between a ``left
co-factor'' and a ``right co-factor'' so that connected components of
$D\setminus(\uparrow\!\!\times S)$ ($D$ with its skeleton removed) are
kept intact (compare with~\cite[Definition~3.7]{Bar-Natan:OnVassiliev}).

\begin{definition} \label{def:GroupLike} An element $Z$ of
$\calA^w(S)$ is ``group-like'' if $\Box(Z)=Z\otimes Z$. We
denote the set of group-like elements in $\calA^w(S)$ by
$\glosm{calAwexp}{\calA^w_{\exp}}(S)$.
\end{definition}

We leave it for the reader to verify that all the operations defined above
restrict to operations $\calA^w_{\exp}\to\calA^w_{\exp}$.

  \parpic[l]{%
  \hspace{-6pt}}%
  \noindent%
 In topology, $\Box$ is the operation of ``cloning'' an entire
2-link. It is not to be confused with $d\Delta$; one dimension down and
with just one component, the pictures are:
\[ \input{figs/BoxVsDelta.pstex_t} \]

  \parpic[l]{%
  \hspace{-8pt}}%
  \noindent%
 In~\cite{AT}, $\Box$ is the co-product of
$\hat{\calU}((\fraka\oplus\attder)\ltimes\attr)$ and moding out by
wheels, $\calA^w_{\exp}$ is $\TAut$.

  \parpic[l]{%
  \hspace{-6pt}}%
  \noindent%
 In Lie theory, $\Box$ is {\em not} the co-product
$\Delta\colon\calU(I\frakg)\to\calU(I\frakg)^{\otimes 2}$. Rather, given
two finite dimensional Lie algebras $\frakg_1$ and $\frakg_1$, $\Box$
corresponds to the map
\[ \Box\colon
  \calU(I(\frakg_1\oplus\frakg_2))^{\otimes S}
  \to \calU(I\frakg_1)^{\otimes S}\otimes\calU(I\frakg_2)^{\otimes S}.
\]

\picskip{0}
\begin{discussion} \label{disc:Primitives} We seek to have efficient
descriptions of the elements of $\calA^w_{\exp}(S)$ and efficient
means of computing the above operations on such elements.

Let $\glosm{Aprimw}{\calA_{\text{prim}}^w}(S)$\footnoteT{$\calA_{\text{prim}}^w$
is elsewhere denoted $\calP^w$.} denote the set of primitives
of $\calA^w(S)$: these are the elements
$\zeta\in\calA^w(S)$ satisfying $\Box(\zeta)=\zeta\otimes
1+1\otimes\zeta$. Let $\glosm{FL}{\FL}(S)$ denote the degree-completed free Lie
algebra with generators $S$, and let $\glosm{CW}{\CW}(S)$ denote the degree-completed
vector space spanned by non-empty cyclic words on the alphabet $S$.
In~\cite[Proposition~\ref{2-prop:Pnses}]{WKO2} we have shown that there
is a short exact sequence of vector spaces
\begin{equation} \label{eq:Primitives}
  0\to\CW(S)\to\calA_{\text{prim}}^w(S)\to\FL(S)^S\to 0,
\end{equation}
where $\FL(S)^S$ denotes the set of all functions $S\to\FL(S)$. Hence
$\calA_{\text{prim}}^w(S)\simeq\FL(S)^S\oplus\CW(S)$ (not canonically!). Often
in bi-algebras there is a bijection given by $\zeta\mapsto
e^\zeta$ between primitive elements $\zeta$ and group-like elements
$e^\zeta$. Hence we may expect to be able to present elements of
${\calA^w_{\exp}(S)}$ as formal exponentials of combinations
of ``trees'' (elements of $\FL(S)^S$) and ``wheels'' (elements of
$\CW(S)$)\footnoteT{
  We use the set-theoretic notation ``$\times$'' rather than the
  linear-algebraic ``$\oplus$'' in Equation~\eqref{eq:expectation} to
  emphasize that the two sides of that equation are only expected to be
  set-theoretically isomorphic. The left-hand-side, in fact, is not even
  a linear space in a natural way.
}:
\begin{equation} \label{eq:expectation}
  \calA^w_{\exp}(S) \sim \yellowm{\TW}(S)
    \coloneqq \FL(S)^S\times\CW(S)
  =\left\{
    (\yellowm{\lambda};\,\yellowm{\omega})\colon\begin{array}{c}
      \lambda=\{a\to\lambda_a\}_{a\in S},\,\lambda_a\in\FL(S) \\
      \omega\in\CW(S)
    \end{array}
  \right\}.
\end{equation}
\glosm{expectation}{\null}We implement Equation~\eqref{eq:expectation}
in a more-or-less straightforward way in Section~\ref{subsec:AT} and in a
less straightforward but somewhat stronger way in Section~\ref{subsec:Ef}.
\endpar{disc:Primitives}

\end{discussion}

\begin{discussion} \label{disc:WhyTwo}
Why are there two presentations for elements of $\calA^w_{\exp}$?

Because as we shall see, $\calA^w$ is a bi-algebra in two ways, using two
different products, yet with the same co-product $\Box$. In $\calA^w$, the
notions ``primitive'' and ``group-like'', whose definition involves only
$\Box$, are canonical. Yet the bijection between primitive and group-like
elements, $\zeta\leftrightarrow e^\zeta$, depends also on the product
used within the power-series interpretation of $e^\zeta$. Thus there are
two different ways to describe the group-like elements $\calA^w_{\exp}$
of $\calA^w$ in terms of its primitives $\TW$.

The first product on $\calA^w$ is the stacking product of
Equation~\eqref{eq:TubeProduct}. The second will be introduced later,
in Equations~\eqref{eq:AHTStacking} and~\eqref{eq:FProduct}.

  \parpic[l]{%
  \hspace{-6pt}}%
  \noindent%
 Very roughly speaking, $\calA^w$ is a combinatorial
model of ``$\pi_1\ltimes\pi_2$'' (with homotopies replaced by isotopies;
see~\cite{KBH}). The other product on $\calA^w$ is the one coming from the
direct product ``$\pi_1\times\pi_2$''.

  \parpic[l]{%
  \hspace{-6pt}}%
  \noindent%
 Very roughly speaking, $\calA^w$ is a combinatorial model of
(tensor powers of a completion of) $\calU(I\frakg)$.  By PBW,
$\calU(I\frakg)\simeq\calU(\frakg)\otimes\glosm{calS}{\calS}(\frakg^\ast)$
as co-algebras but not as algebras. The other product on
$\calA^w$ is the one corresponding to the natural product on
$\calU(\frakg)\otimes\calS(\frakg^\ast)$. The reality is a bit more
delicate, though. $\calA^w$ is only a model of (a small part of)
the $\frakg$-invariant part of $\calU(I\frakg)$, and the co-product
$\Box$ of $\calA^w$ does not correspond to the co-product $\Delta$
of $\calU(I\frakg)$. \endpar{disc:WhyTwo}

\end{discussion}

\draftcut\subsection{Some preliminaries about free Lie algebras and cyclic
  words} \label{subsec:FL}

It should be clear from Discussion~\ref{disc:Primitives} that free Lie
algebras and cyclic words play a prominent role in this paper. For the
convenience of our readers we collect in this section some preliminaries
about about these topics. Almost everything in this section comes either
from Alekseev-Torossian's~\cite{AT}, or from~\cite{WKO2,KBH}, and the
detailed proofs of the assertions made here can be found in these papers.

  \parpic[l]{%
  \hspace{-6pt}}%
  \noindent%
 Note that Lie algebras appear in two distinct
roles in this paper.  {\em Free} Lie algebras $\FL$ appear
along with cyclic words $\CW$ as the primitives of $\calA^w$
(Equation~\eqref{eq:Primitives}). {\em Finite dimensional} Lie algebras
$\frakg$ appear only as motivational comments, always marked with a
\raisebox{-4pt}{\scalebox{0.75}{}} symbol. As
already indicated, elements in $\calA^w$, and hence elements of $\FL$
and of $\CW$ can represent ``universal'' formulas that make sense in any
finite dimensional Lie algebra $\frakg$. Hence part of our discussion
of $\FL$ and $\CW$ is a discussion of things that make sense universally
for all finite dimensional Lie algebras.

Recall that $\FL(S)$ denotes the graded completion of the
free Lie algebra over a set of generators $S$, all considered
to have degree $1$. In the case when $S=\{x_1,\ldots,x_n\}$,
Alekseev and Torossian~\cite{AT}
denote this space $\glosm{lie}{\lie}_n$.\footnoteC{
In computer talk, generators of $\FL(S)$ are always
single-character ``Lyndon words'' (e.g.~\cite{Reutenauer:FreeLie}); 
in our case we set $x$ and $y$ to be the single-character words ``$x$'' and
``$y$'', and then $\alpha$, $\beta$, and $\gamma$ to be the Lie series
$x+[x,y]$, $y-[x,[x,y]]$, and $x+y-2[x,y]$ (elements of $\FL$ are infinite
series, in general, but these examples are finite):

\shortdialoginclude{alphabetagamma}

Note that as we requested earlier, our example series are printed
to degree 4. Note also that they are printed using ``top bracket''
$\glosm{ob}{\ob{xy}}\coloneqq [x,y]$ notation, which is easier to read
when many brackets are nested.

We then compute $[\alpha,\beta]$ and verify the Jacobi identity for
$\alpha$, $\beta$, and $\gamma$:

\shortdialoginclude{BracketExample}
}

A noteworthy element of $\FL(x,y)$ is the Baker-Campbell-Hausdorff
series,\footnoteC{In computer talk:

\shortdialogincludewithlink{bch}

Just to show that we can, here are the lexicographically
middle three of the 2,181 terms of the BCH series in degree 16, along
with the time in seconds it took my humble laptop to compute it:

\shortdialoginclude{bch16}

(In a few hours my laptop computed the BCH series to degree 22; in as much
as I know, the farthest it was ever computed. See~\cite{Bar-Natan:BCH,
CasasMurua:EfficientBCH}.)
}
\[ \glosm{BCH}{\BCH}(x,y)\coloneqq\log(e^xe^y)
  = x+y+\frac{[x,y]}{2} +\frac{[x,[x,y]] + [[x,y],y]}{12}
    +\ldots.
\]

Recall also that $\CW(S)$ ($\glosm{attr}{\attr}_n$,
in~\cite{AT}) denotes the graded completion
of the vector space spanned by non-empty cyclic words in the alphabet
$S$. Our convention is to crown cyclic words with an ``arch''; thus
$\glosm{wideparen}{\wideparen{uvw}} = \wideparen{vwu}$\footnoteC{Cyclic
words in computer talk:

\shortdialoginclude{omegas}
}. Note that there is a map $\CW(\FL(S))\to\CW(S)$ by interpreting brackets
within elements of $\FL(S)$ as commutators and then mapping ``long'' words
to cyclic words. E.g., $\wideparen{u[v,w]} = \wideparen{uvw} -
\wideparen{uwv}$.

We denote by $\glosm{hdeg}{h^{\deg}}$ the operations $\FL\to\FL$ and
$\CW\to\CW$ which multiply any degree $k$ element by $h^k$. In particular,
$\glosm{mdeg}{(-1)^{\deg}}$ acts on $\FL/\CW$ as the identity in even
degrees and as minus the identity in odd degrees.\footnoteC{In computer
talk:

\shortdialoginclude{DegreeScale}
}

Let $\glosm{der}{\der}_S$ denote the Lie algebra of all derivations of $\FL(S)$
($\glosm{atder}{\atder}_n$ in~\cite{AT}). There
is a linear map $\glosm{partial}{\partial}\colon\FL(S)^S\to\der_S$
which assigns to every $\lambda=(\lambda_a)_{a\in
S}\in\FL(S)^S$ the unique derivation $\partial_\lambda$ for
which $\partial_\lambda(a)=[a,\lambda_a]$ for every $a\in
S$.\footnotemarkT\,\footnotemarkC\ The image of $\partial$ is
a subalgebra of $\der_S$ denoted $\glosm{tder}{\tder}_S$
($\glosm{attder}{\attder}_n$ in~\cite{AT}); the elements of $\tder_S$
are called ``tangential derivations''. The kernel of $\partial$ can
be identified as the Abelian Lie algebra $\glosm{A}{A}_S$ generated by $S$
($\glosm{fraka}{\fraka}_n$ in~\cite{AT}), which is linearly embedded
in $\FL(S)^S$ as the set of all sequences $\lambda\colon S\to\FL(S)$ for
which $\lambda_a$ is a scalar multiple of $a$ for every $a\in S$. Thus
we have a short exact sequence of vector spaces
\begin{equation} \label{eq:FLisAtder}
  0 \rightarrow A_S\rightarrow \FL(S)^S\xrightarrow{\partial}
  \tder_S\rightarrow 0.
\end{equation}
The map $\FL(S)^S\ni\lambda=(\lambda_a)\mapsto\sum_a\langle\lambda_a,
a\rangle a\in A_S$, where $\langle\lambda_a, a\rangle$ is the
coefficient of $a$ in $\lambda_a$ is a splitting of the above sequence,
and hence $\FL(S)^S\simeq A_S\oplus\tder_S$ in a canonical manner.

\footnotetextC{An example:

\shortdialoginclude{TangentialDerivative}
}

There is a unique Lie bracket $\glosm{tb}{[\cdot,\cdot]_{tb}}$ (the
``tangential bracket'') on $\FL(S)^S$ which makes \eqref{eq:FLisAtder}
a split exact sequence of Lie algebras, and hence
$(\FL(S)^S,[,]_{tb})\simeq A_S\oplus\tder_S$ as Lie algebras. With
$[\cdot,\cdot]$ denoting the ordinary direct-sum bracket on $\FL(S)^S$
and with the action of $\partial_\lambda$ extended to
$\partial_\lambda\colon\FL(S)^S\to\FL(S)^S$ in the obvious manner,
we have\footnotemarkC
\[ [\lambda_1,\lambda_2]_{tb}
  =[\lambda_1,\lambda_2]
    +\partial_{\lambda_1}\lambda_2
    -\partial_{\lambda_2}\lambda_1.
\]
\footnotetextT{Using the notation of~\cite{KBH}, $\partial_\lambda =
-\sum_{a\in S}\ad_a^{\lambda_a} = -\sum_{a\in S}\ad_a\{\lambda_a\}$. I
apologize for the minus sign which stems from a bad choice made
in~\cite{KBH}.}
\footnotetextC{For example:

\shortdialoginclude{tb}
}

The $\lambda\mapsto\partial_\lambda$ action of $(\FL(S)^S,[,]_{tb})$
on $\FL(S)$ extends to an action on the universal enveloping
algebra of $\FL(S)$, the free associative algebra $\FA(S)$ on $S$
generators, and then descends to the vector-space quotient of
$\FA(S)$ by commutators, namely to cyclic words.  Leaving aside
the empty word, we find that $(\FL(S)^S,[,]_{tb})$ acts on
$\CW(S)$, and hence also on $\TW(S)$.\footnoteC{ We check that
up to degree 8, $\partial_{[\lambda_1,\lambda_2]_{tb}}(\omega_1) =
[\partial_{\lambda_1},\partial_{\lambda_2}](\omega_1)$ (for our choice
of $\lambda_1$, $\lambda_2$, and $\omega_1$, both sides vanish below
degree 8):

\shortdialoginclude{tb2}

Note that the comparison operator $\equiv$ returns a ``Boolean Sequence''
({\tt BS}) rather than a single {\tt True}/{\tt False} value, as the
computer has no way of knowing whether two series are equal without
computing them up to a given degree. In our case, we've asked for the
comparison of {\tt lhs} with {\tt rhs} up to degree 8, and the output,
including degree 0, is a sequence of 9 affirmations, summarized as ``{\tt 9
True}''.
}

There are two ways to assign an automorphism of the free Lie algebra
$\FL(S)$ to an element $\lambda\in\FL(S)^S$:
\begin{enumerate}
\item One may exponentiate the derivation $\partial_\lambda$ to get
$e^{\partial_\lambda}\colon\FL(S)\to\FL(S)$.
\item One may define an automorphism
$\glosm{C}{C^\lambda}\colon\FL(S)\to\FL(S)$ by setting
its values on the generators by $C^\lambda(a)\coloneqq
e^{\lambda_a}ae^{-\lambda_a}=e^{\ad\lambda_a}a$. We denote the inverse
of $C^\lambda$ by $\glosm{RC}{RC^{-\lambda}}$ and note that it is {\em not}
$C^{-\lambda}$.
\end{enumerate}

  \parpic[l]{%
  \hspace{-8pt}}%
  \noindent%
 In~\cite{AT},~(1) corresponds to the presentation of elements of the
automorphism group $\glosm{TAut}{\TAut_n}$ as exponentials of elements of
its Lie algebra $\tder_n$, while~(2) corresponds to its presentation in
terms of ``basis conjugating automorphisms'' $x_i\mapsto g_i^{-1}x_ig_i$
where $g_i=e^{-\lambda_i}$. Compare with \cite[Section~5.1]{AT}.

The following pair of propositions, which we could not find elsewhere, relates
these two automorphisms:

\begin{proposition} \label{prop:Gamma}
Given $\lambda\in\FL(S)^S$, let $t$ be a scalar-valued
formal variable and let $\glosm{Gammat}{\Gamma_t(\lambda)}\in\FL(S)^S$
be the (unique) solution of the ordinary differential equation
\begin{equation} \label{eq:GammaODE}
  \Gamma_0(\lambda)=0
  \qquad\text{and}\qquad
  \frac{d\Gamma_t(\lambda)}{dt} = \lambda \act e^{-t\partial_\lambda}
    \act \frac{\ad\Gamma_t(\lambda)}{e^{\ad\Gamma_t(\lambda)}-1}.
\end{equation}
\begin{flalign} \label{eq:Gamma}
  & \text{Then} & e^{-t\partial_\lambda}=C^{\Gamma_t(\lambda)}.\footnotemarkC &&
\end{flalign}
\end{proposition}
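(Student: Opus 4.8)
The plan is to exhibit both $C^{\Gamma_t(\lambda)}$ and $\phi_t\coloneqq e^{-t\partial_\lambda}$ as solutions of one and the same initial value problem inside the (pro-nilpotent) automorphism group of $\FL(S)$, and then to invoke uniqueness of solutions. Since an automorphism of $\FL(S)$ is determined by its values on the generators $a\in S$, it suffices to show that for each $a$ the two curves $t\mapsto\phi_t(a)$ and $t\mapsto C^{\Gamma_t(\lambda)}(a)$ satisfy the same linear ODE $\dot F=[(\lambda\act\phi_t)_a,\,F]$ with the same initial value $F|_{t=0}=a$. The point is that the coefficient $(\lambda\act\phi_t)_a=(e^{-t\partial_\lambda}\lambda)_a$ is a fixed, known function of $t$, the same no matter which of the two curves we substitute for $F$.

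For the left-hand side this is immediate. Differentiating $\phi_t=e^{-t\partial_\lambda}$ gives $\dot\phi_t=-\partial_\lambda\circ\phi_t$, and since $\phi_t$ commutes with $\partial_\lambda$ and is a Lie algebra automorphism, $\dot\phi_t(a)=-\partial_\lambda(\phi_t(a))=-\phi_t(\partial_\lambda a)=-\phi_t([a,\lambda_a])=[(\lambda\act\phi_t)_a,\,\phi_t(a)]$, using $\partial_\lambda a=[a,\lambda_a]$ and $\phi_t[a,\lambda_a]=[\phi_t a,\phi_t\lambda_a]$. The initial condition $\phi_0=\mathrm{id}$ is clear.

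The right-hand side is where the defining ODE for $\Gamma_t(\lambda)$ earns its shape. Writing $\gamma=\Gamma_t(\lambda)$ and $g_a\coloneqq e^{\gamma_a}$, the paper's definition gives $C^\gamma(a)=g_a\,a\,g_a^{-1}$, so differentiating in $t$ and using $\tfrac{d}{dt}(g_a^{-1})=-g_a^{-1}\dot g_a g_a^{-1}$ yields $\tfrac{d}{dt}C^\gamma(a)=[\dot g_a g_a^{-1},\,C^\gamma(a)]$. The standard derivative-of-exponential identity gives $\dot g_a g_a^{-1}=\frac{e^{\ad\gamma_a}-1}{\ad\gamma_a}(\dot\gamma_a)$. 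Now \eqref{eq:GammaODE}, read component-wise in $a$, prescribes $\dot\gamma_a=\frac{\ad\gamma_a}{e^{\ad\gamma_a}-1}\big((\lambda\act\phi_t)_a\big)$; because $\frac{e^{\ad\gamma_a}-1}{\ad\gamma_a}$ and $\frac{\ad\gamma_a}{e^{\ad\gamma_a}-1}$ are mutually inverse power series in the single operator $\ad\gamma_a$, they cancel and leave $\dot g_a g_a^{-1}=(\lambda\act\phi_t)_a$. Hence $\tfrac{d}{dt}C^\gamma(a)=[(\lambda\act\phi_t)_a,\,C^\gamma(a)]$ — the very same ODE — with initial value $C^{\Gamma_0(\lambda)}(a)=C^0(a)=a$, since $\Gamma_0(\lambda)=0$.

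By uniqueness of solutions the two curves then coincide for every generator $a$, whence $e^{-t\partial_\lambda}=C^{\Gamma_t(\lambda)}$. The steps needing the most care are, first, the derivative-of-exponential formula $\tfrac{d}{dt}e^{\gamma_a}\cdot e^{-\gamma_a}=\frac{e^{\ad\gamma_a}-1}{\ad\gamma_a}\dot\gamma_a$ together with the recognition that the operator in \eqref{eq:GammaODE} is exactly its inverse — this inversion is the entire reason the ODE is written with $\frac{\ad\gamma}{e^{\ad\gamma}-1}$; and second, the justification of existence and uniqueness for the two initial value problems. I expect the uniqueness not to be an analytic obstacle but an algebraic one: everything lives in the degree-completed $\FL(S)$, $\lambda$ and $\gamma$ carry no degree-$0$ part, so each ODE solves uniquely order-by-order in degree (the degree-$n$ part of each right-hand side depends only on strictly lower-degree data), which simultaneously secures the existence and uniqueness of $\Gamma_t(\lambda)$ itself that the statement presupposes. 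The only genuine bookkeeping hazard I anticipate is keeping the postfix $\act$ convention and the component-wise reading of the operators in \eqref{eq:GammaODE} straight throughout.
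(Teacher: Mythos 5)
Your proof is correct, and it is organized differently from the paper's in a way that is worth noting. The paper compares the two sides $L_t=e^{-t\partial_\lambda}$ and $R_t=C^{\Gamma_t(\lambda)}$ through their logarithmic derivatives $\dot L_t\act L_t^{-1}$ and $\dot R_t\act R_t^{-1}$ as derivations of $\FL(S)$; computing the latter requires the full differential of $\mu\mapsto C^\mu$ (imported from the KBH paper), and the resulting expression $-\partial_{\lambda\act e^{-t\partial_\lambda}\act RC^{-\Gamma_t(\lambda)}}$ still contains $RC^{-\Gamma_t(\lambda)}$, which is only known to equal $e^{t\partial_\lambda}$ once the proposition is already known. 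The paper therefore has to run a degree-by-degree bootstrap: the identity to degree $d$ implies equality of logarithmic derivatives to degree $d$, which pushes the identity to degree $d+1$. You sidestep this entirely by comparing the two curves evaluated on each generator $a$ and exhibiting a \emph{common} linear ODE $\dot F=[(\lambda\act e^{-t\partial_\lambda})_a,F]$ whose coefficient involves only $\lambda$ and $e^{-t\partial_\lambda}$ --- both known in advance, with no reference to $\Gamma_t$ --- so a single application of degree-by-degree uniqueness closes the argument with no induction on the conclusion. The underlying computation is the same (the derivative-of-the-exponential identity $\dot g_a g_a^{-1}=\frac{e^{\ad\gamma_a}-1}{\ad\gamma_a}\dot\gamma_a$ cancelling against the $\frac{\ad\gamma_a}{e^{\ad\gamma_a}-1}$ built into \eqref{eq:GammaODE} is exactly what powers the paper's $\delta C^\mu$ formula), but your framing is cleaner and self-contained, at the mild cost of having to commit explicitly to the componentwise (direct-sum-bracket) reading of $\ad\Gamma_t(\lambda)$ on $\FL(S)^S$ --- which is indeed the reading the paper intends.
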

\footnotetextC{
We verify that the computer-calculated $\Gamma_t(\lambda)$ satisfies the
ODE in~\eqref{eq:GammaODE} and then that the operator equality~\eqref{eq:Gamma}
holds, at least when evaluated on ``our'' $\gamma$:

\shortdialoginclude{TestingGammaODE}

\shortdialoginclude{TestingGamma}
}

\begin{proof} The two sides $L_t$ and $R_t$ of Equation~\eqref{eq:Gamma}
are power-series perturbations of the identity automorphism of
$\FL(S)$. More fully, $L_t$ can be written $L_t=\sum_{d\geq 0}t^dL(d)$
where $L(d)\colon\FL(S)\to\FL(S)$ raises degrees by at least $d$
(and so the sum converges), and where $L(0)$ is the identity.  $R_t$
can be written in a similar way. We claim that it is enough to prove that
\begin{equation} \label{eq:AB}
  A_t\coloneqq(\frac{dL_t}{dt})\act L_t^{-1}
  = (\frac{dR_t}{dt})\act R_t^{-1} \eqqcolon B_t.
\end{equation}
Indeed, if otherwise $L_t\neq R_t$, consider the minimal $d$ for which
$L(d)\neq R(d)$.  Then $d>0$ and the least-degree term in $A_t-B_t$
is the degree $d-1$ term, which equals $dt^{d-1}L(d)\act L_t^{-1} -
dt^{d-1}R(d)\act R_t^{-1} = dt^{d-1}(L(d)-R(d))\act L_t^{-1} \neq 0$
(the last equality is because $L_t^{-1}=R_t^{-1}$ to degree $d$),
contradicting Equation~\eqref{eq:AB}. Note that in fact we have shown
that if $A_t=B_t$ to degree $d$ in $t$, then Equation~\eqref{eq:Gamma}
holds to degree $d+1$.

To compute $B_t$ we need the differential of
$C^\mu$ (at $\mu=\Gamma_t(\lambda)$) and the chain rule. The differential
of $C^\mu$ is quite difficult; fortunately, we have computed it in the
case where $\mu=(u\to\gamma)$ is supported on just one $u\in S$,
in~\cite[Lemma~\ref{KBH-lem:dC}]{KBH}. Both the result and its proof
generalize simply, and so we have
\[ \delta C^\mu = -\partial\left\{
    \delta\mu \act \frac{e^{\ad\mu}-1}{\ad\mu} \act RC^{-\mu}
  \right\}\act C^\mu,
\]
where we have written $\partial\{\text{mess}\}$ instead of
$\partial_{\text{mess}}$ because $\text{mess}$ is too big to fit as
a subscript. Hence by the chain rule and then by Equation~\eqref{eq:GammaODE},
\[ B_t
  = -\partial\left.\left\{
    \frac{d\Gamma_t(\lambda)}{dt} \act \frac{e^{\ad\mu}-1}{\ad\mu} \act RC^{-\mu}
  \right\}\right|_{\mu=\Gamma_t(\lambda)}
  = -\partial\left\{
    \lambda \act e^{-t\partial_\lambda} \act RC^{-\Gamma_t(\lambda)}
  \right\}
  = -\partial_{\lambda \act e^{-t\partial_\lambda} \act RC^{-\Gamma_t(\lambda)}}.
\]
On the other hand, computing $A_t$ is a simple differentiation, and
we get that $A_t=-\partial_\lambda$. Comparing with the line above,
we find that if Equation~\eqref{eq:Gamma} holds to degree $d$, then
Equation~\eqref{eq:AB} also holds to degree $d$. But then as we
noted,~\eqref{eq:Gamma} holds to degree $d+1$. As
Equation~\eqref{eq:Gamma} clearly holds at $t=0$, we find that it holds
to all orders. \qed
\end{proof}

\begin{comment} It is easier (though insufficient) to assume that there is
a solution $\Gamma_t(\lambda)$ to Equation~\eqref{eq:Gamma} and deduce that
it must satisfy the differential equation~\eqref{eq:GammaODE}: simply
differentiate~\eqref{eq:Gamma} with respect to $t$ and simplify as much as
you can allowing yourself to use~\eqref{eq:Gamma} as needed within the
simplification process. The result is~\eqref{eq:GammaODE}, and the steps
follow the computational steps of the above proof rather closely. The
actual proof is a bit harder because if we cannot assume~\eqref{eq:Gamma}
while deriving it, so we have to resort to an inductive process. 
\end{comment}

\begin{proposition} \label{prop:Lambda} As in the previous proposition,
let $\glosm{Lambdat}{\Lambda_t(\lambda)}$ be the (unique) solution of
\begin{equation} \label{eq:LambdaODE}
  \Lambda_0(\lambda)=0
  \qquad\text{and}\qquad
  \frac{d\Lambda_t(\lambda)}{dt} =
    \lambda \act e^{\partial_{\Lambda_t(\lambda)}}
    \act \frac{\ad_{tb}\Lambda_t(\lambda)}{e^{\ad_{tb}\Lambda_t(\lambda)}-1}.
\end{equation}
\begin{flalign} \label{eq:Lambda}
  & \text{Then} & C^{t\lambda}=e^{-\partial_{\Lambda_t(\lambda)}}. &&
\end{flalign}
\end{proposition}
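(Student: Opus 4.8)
The plan is to imitate, almost verbatim, the logarithmic-derivative argument of Proposition~\ref{prop:Gamma}, with the two kinds of automorphisms swapping roles. Set $L_t\coloneqq C^{t\lambda}$ and $R_t\coloneqq e^{-\partial_{\Lambda_t(\lambda)}}$. Both are power-series perturbations of the identity of $\FL(S)$ whose $t^d$-coefficient raises degree by at least $d$, and both equal the identity at $t=0$. As there, it is enough to prove the equality of logarithmic derivatives $A_t\coloneqq(dL_t/dt)\act L_t^{-1}=(dR_t/dt)\act R_t^{-1}\eqqcolon B_t$: the identical reasoning shows that if $A_t=B_t$ to degree $d$ in $t$ then \eqref{eq:Lambda} holds to degree $d+1$, so by induction (base case $t=0$) it suffices to check that \emph{assuming} \eqref{eq:Lambda} to degree $d$ forces $A_t=B_t$ to degree $d$.

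First I would compute $A_t$, the side carrying the conjugation automorphism. Feeding $\mu=t\lambda$, $\delta\mu=\lambda$ into the formula for $\delta C^\mu$ obtained in the proof of Proposition~\ref{prop:Gamma} and cancelling $C^{t\lambda}$ against $L_t^{-1}$ gives
\[
  A_t=-\partial\big\{\lambda\act\tfrac{e^{\ad(t\lambda)}-1}{\ad(t\lambda)}\act RC^{-t\lambda}\big\}.
\]
The one-line miracle special to $\mu=t\lambda$ is that $\tfrac{e^{\ad(t\lambda)}-1}{\ad(t\lambda)}$ acts componentwise and in component $a$ is applied to $\lambda_a$ itself; since $\ad\lambda_a(\lambda_a)=[\lambda_a,\lambda_a]=0$ it acts there as the identity, so $\lambda\act\tfrac{e^{\ad(t\lambda)}-1}{\ad(t\lambda)}=\lambda$ and hence $A_t=-\partial_{\lambda\act RC^{-t\lambda}}$ \emph{exactly}.

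Next I would compute $B_t$, the side carrying the exponential of a derivation. Writing $\frac{d}{dt}e^{X}=M\circ e^{X}$ with $X=-\partial_{\Lambda_t(\lambda)}$ and $M=\tfrac{e^{\ad X}-1}{\ad X}\tfrac{dX}{dt}\in\der_S$, one gets $B_t=(dR_t/dt)\act R_t^{-1}=R_t^{-1}\circ M\circ R_t$. This conjugation multiplies the operator $\tfrac{1-e^{-\ad}}{\ad}$ appearing in $M$ by $e^{\ad}$, turning it into $\tfrac{e^{\ad}-1}{\ad}$, where $\ad$ now denotes the adjoint action of $\partial_{\Lambda_t(\lambda)}$ on $\der_S$. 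Because $\partial\colon(\FL(S)^S,[\cdot,\cdot]_{tb})\to\der_S$ is a Lie algebra homomorphism (exactly what makes \eqref{eq:FLisAtder} a sequence of Lie algebras), this $\der_S$-adjoint corresponds under $\partial$ to $\ad_{tb}\Lambda_t(\lambda)$ acting on $\FL(S)^S$, so $B_t=-\partial\big\{\tfrac{d\Lambda_t(\lambda)}{dt}\act\tfrac{e^{\ad_{tb}\Lambda_t(\lambda)}-1}{\ad_{tb}\Lambda_t(\lambda)}\big\}$. The ODE~\eqref{eq:LambdaODE} is engineered so that its factor $\tfrac{\ad_{tb}\Lambda_t(\lambda)}{e^{\ad_{tb}\Lambda_t(\lambda)}-1}$ exactly cancels the one just produced, collapsing this to $B_t=-\partial_{\lambda\act e^{\partial_{\Lambda_t(\lambda)}}}$.

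Finally I would compare. Assuming \eqref{eq:Lambda} to degree $d$, i.e. $C^{t\lambda}=e^{-\partial_{\Lambda_t(\lambda)}}$ to that order, the inverses agree to the same order, so $RC^{-t\lambda}=e^{\partial_{\Lambda_t(\lambda)}}$ there; hence $A_t=-\partial_{\lambda\act RC^{-t\lambda}}=-\partial_{\lambda\act e^{\partial_{\Lambda_t(\lambda)}}}=B_t$ to degree $d$, and the bootstrapping upgrades this to \eqref{eq:Lambda} in degree $d+1$, closing the induction. I expect the main obstacle to be purely organizational: keeping the $\act$-composition order straight and verifying that the conjugation by $R_t^{-1}$ together with the homomorphism identity $[\partial_{\mu_1},\partial_{\mu_2}]=\partial_{[\mu_1,\mu_2]_{tb}}$ really does convert the $\der_S$-side operators into the tangential-bracket operators that \eqref{eq:LambdaODE} is built to annihilate. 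Once this intertwining and the vanishing $\ad\lambda_a(\lambda_a)=0$ are secured, both computations and the final comparison are routine.
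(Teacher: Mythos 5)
Your proof is correct and is precisely the adaptation of the paper's proof of Proposition~\ref{prop:Gamma} that the paper has in mind when it declares the proof of Proposition~\ref{prop:Lambda} ``very similar'' and omits it in favour of a computer check: the easy and hard sides swap roles (the simple differentiation now lands on $e^{-\partial_{\Lambda_t(\lambda)}}$, while $\delta C^\mu$ is specialized at $\mu=t\lambda$, where $\frac{e^{\ad(t\lambda)}-1}{\ad(t\lambda)}$ fixes $\lambda$ componentwise because $[\lambda_a,\lambda_a]=0$), and the derivative-of-the-exponential formula is transported to $\ad_{tb}$ via the Lie homomorphism $\partial$, exactly so that the ODE cancels the resulting factor. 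The only blemish is the sentence describing the conjugation of $M$ by $R_t^{-1}$, which names the operator $\frac{1-e^{-\ad}}{\ad}$ where $M$ actually contains $\frac{e^{\ad X}-1}{\ad X}$; since $X=-\partial_{\Lambda_t(\lambda)}$, the signs work out and your final expression $B_t=-\partial_{\lambda\act e^{\partial_{\Lambda_t(\lambda)}}}$ is nevertheless correct.
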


The proof of this proposition is very similar and not even a tiny bit
nicer than the proof of the previous one. So we skip it and
instead include a computer verification.\footnoteC{
We verify that the computer-calculated $\Lambda_t(\lambda)$
satisfies the ODE in~\eqref{eq:LambdaODE} and then that
the operator equality~\eqref{eq:Lambda} holds, at least when
evaluated on ``our'' $\gamma$:

\shortdialoginclude{TestingLambdaODE}

\shortdialoginclude{TestingLambda}
}

As special cases, we denote $\Gamma_1(\lambda)$ by
$\glosm{Gamma}{\Gamma(\lambda)}$ and $\Lambda_1(\lambda)$ by
$\glosm{Lambda}{\Lambda(\lambda)}$.

One special case of $C^\lambda$ deserves to be named:

\begin{definition} \label{def:CRC}
(Compare~\cite[Section~\ref{KBH-subsec:FLSuccess}]{KBH}) Given $u\in
S$ and $\gamma\in\FL(S)$ let $\glosm{Cu}{C_u^{\gamma}}$ denote the
automorphism of $\FL(S)$ defined by mapping the generator $u$ to its
``conjugate'' $e^{\gamma}ue^{-\gamma}=e^{-\ad\gamma}(u)$ (this is simply
$C^\lambda$, where $\lambda$ is the length $1$ sequence $(u\to\gamma)$).
Let $\glosm{RCu}{RC_u^{-\gamma}}$ be the inverse of $C_u^{\gamma}$
(which is {\em not} $C_u^{-\gamma}$).\footnoteC{Just testing:

\shortdialoginclude{CCAndRC}
}
\end{definition}

\Needspace{2in}
Last we define/recall a number of functionals $\FL(S)\to\CW(S)$:

\vskip 1mm
{
\makeatletter\def\thm@space@setup{%
  \thm@preskip=0cm plus 0cm minus 0cm %\thm@postskip=\thm@preskip
}\makeatother
\parpic[r]{\input{figs/tru.pstex_t}}
\begin{definition} \label{def:tru}
For $u\in S$ we let $\glosm{tru}{\tr_u}\colon\FL(S)\to\CW(S)$
be the sum of all ways of connecting the head of $\gamma$ to any
of its $u$-labelled tails and regarding the result as an element
of $\CW(\FL(S))\to\CW(S)$.  The example on the right corresponds
to the specific computation $\tr_u[[v,u],u] = \wideparen{[v,u]} +
\wideparen{v(-u)} = -\wideparen{uv}$\footnoteC{In computer talk, and
using a temporary value for $\gamma$, so as not to interfere with its
existing value:

\shortdialoginclude{tru}
}
\end{definition}
}

\vskip 1mm
{
\makeatletter\def\thm@space@setup{%
  \thm@preskip=0cm plus 0cm minus 0cm %\thm@postskip=\thm@preskip
}\makeatother
\parpic[r]{\input{figs/divu.pstex_t}}
\begin{definition} \label{def:J}
(Compare~\cite[Section~\ref{KBH-subsec:divJ}]{KBH})
For $u\in S$ we let $\glosm{atdivu}{\atdiv_u}\colon\FL(S)\to\CW(S)$
be the functional defined schematically by the picture on the right,
which corresponds to the specific computation $\atdiv_u[[v,u],u] =
\wideparen{u[v,u]} + \wideparen{uv(-u)} = -\wideparen{uuv}$\footnoteC{In
computer talk:

\shortdialoginclude{divu}
}
(more details in~\cite{KBH}). Given also $\gamma\in\FL(S)$, set
\[ \glosm{Ju}{J_u}(\gamma) := \int_0^1ds\,\atdiv_u\!\left(
    \gamma \sslash RC_u^{s\gamma}
  \right) \sslash C_u^{-s\gamma}.\footnotemarkC
\]
\end{definition}
}

\footnotetextC{We quote the implementation of $J$ in
\href{\web/FreeLie.m}{\tt FreeLie.m} (\href{\web/FreeLie.m}{\tt FL}) and,
reverting to the ``old'' $\gamma$, compute $J_1(\gamma)$:
\FLQuote{JDef}

\shortdialoginclude{Ju}
}

\picskip{0}
\begin{definition} \label{def:j} Let
$\glosm{atdiv}{\atdiv}\colon\FL(S)\to\CW(S)$ be the Alekseev-Torossian
``divergence'' functional, as in~\cite[Section~5.1]{AT}, but extended
by $0$ on $A_S$.  In our language, $\atdiv\lambda=\sum_{u\in
S}\atdiv_u\lambda$.  Let $\glosm{j}{j}\colon\FL(S)\to\CW(S)$ is
the Alekseev-Torossian ``logarithm of the Jacobian'':
$j(\lambda) = \frac{e^{\partial_\lambda}-1}{\partial_\lambda}
(\atdiv\lambda)$.\footnoteC{A quote of the computer-definition, and then
$\atdiv\lambda$ and $j(\lambda)$, computed to degree 5:
\FLQuote{divDef}

\shortdialoginclude{j}
}
\end{definition}

Alekseev and Torossian prove in~\cite{AT} that $j$ is the unique functional
$j\colon\FL(S)\to\CW(S)$ satisfying the ``cocycle condition''
$j\left(\glosm{BCHtb}{\BCH_{tb}}(\lambda_1,\lambda_2)\right) =
j(\lambda_1)+e^{\partial_{\lambda_1}}j(\lambda_2)$, where $\BCH_{tb}$
stands for the $\BCH$ formula using the tangential bracket
$[\cdot,\cdot]_{tb}$ on $\FL(S)^S$:
\[ \BCH_{tb}(\lambda_1,\lambda_2)
  = \lambda_1+\lambda_1+\frac12[\lambda_1,\lambda_2]_{tb}
    +\ldots,
\]
and the ``initial condition''
$\frac{\partial}{\partial\epsilon}j(\epsilon\lambda) =
\atdiv\lambda$.\footnoteC{We verify the cocycle condition and the initial
condition. For the latter, we first declare $\epsilon$ to be ``an
infinitesimal'' by declaring that $\epsilon^2=0$, and then we verify that
$j(\epsilon\lambda) = \epsilon\atdiv\lambda$:

\shortdialoginclude{cocycle4j}

\shortdialoginclude{dj}
}

\draftcut
\subsection{The lower-interlaced presentation $E_l$ of
$\calA^w_{\exp}$} \label{subsec:AT}

For a finite set $S$ let $\glosm{TWl}{\TW_l}(S)$ be set-theoretically
the same as $\TW(S) = \FL(S)^S\times\CW(S)$ --- we only add the ``$l$''
subscript to emphasize that $\TW_l$ carries an algebraic structure,
and that it is different from the algebraic structure on $\TW_s$,
which we will study later. Elements of $\TW_l(S)$ are ordered pairs
$\glosm{parenl}{(\lambda;\,\omega)_l}$, where $\lambda\in\FL(S)^S$,
$\omega\in\CW(S)$, and the subscript $l$ is there only to remind us of
the context.

\begin{samepage}
Set $\glosm{ElDef}{\null}$
\[ \glosm{El}{E_l}(\lambda;\,\omega)_l\coloneqq \exp(\yellowm{l}\lambda)
      \ast\exp(\yellowm{\iota}\omega)
    \in\calA^w_{\exp}(S),
  \qquad\left(\parbox{1.8in}{\centering
    ``$E_l$'' for ``\underline{E}xponentiation after using $\underline{l}$''
  }\right)
\]
\end{samepage}
\begin{samepage}
\parpic[r]{\parbox{1.75in}{
  \centering{\input{figs/El.pstex_t}}
  \vspace{-3mm}
  \begin{figcap} \label{fig:El} $E_l(\lambda;\,\omega)_l$. \end{figcap}
  \vspace{3mm}
}}
\noindent where $l\colon\FL(S)^S=A_S\oplus\tder_S\to\calA^w(S)$
is the ``lower'' Lie embedding\footnotemarkT\
of trees into ${\calA^w(S)}$
(see~\cite[Section~\ref{2-subsec:ATSpaces}]{WKO2}), where $\iota$
is the obvious inclusion of wheels ($=\CW(S)=\attr_S$) into
${\calA^w(S)}$, and where exponentiation is taken using the
stacking product~\eqref{eq:TubeProduct} of $\calA^w(S)$. A
pictorial representation of $E_l(\lambda;\,\omega)_l$ appears on the right:
Reading from the bottom up, we see ``exponentially many'' copies of
$\lambda$ (meaning, a sum over $n$ of $n$ copies with coefficient $1/n!$).
Each $\lambda$ is a linear combination of trees with one head and many
tails, which are attached to the strands in $T$ with the head below the
tails. Each copy of $\lambda$ appears on the right as a gray 
``wizard's cap'' whose tip corresponds to the head of $\lambda$,
and is therefore tipped downward. Above $\exp(l\lambda)$ is our symbolic
representation of $\exp(\iota\omega)$.

Figure~\ref{fig:El} also explains the name ``interlaced'' for this
presentation, for in it heads and tails are interlaced along the strands
of $S$ (contrast with $E_s$ in Figure~\ref{fig:Es} and with $E_f$
in Figure~\ref{fig:Ef}).
\end{samepage}

It follows from the results
of~\cite[Section~\ref{2-subsec:ATSpaces}]{WKO2} that the map
$E_l\colon\TW_l(S)\to\calA^w_{\exp}(S)$ is a set-theoretic
bijection. Hence the operations of Definition~\ref{def:Operations}
induce corresponding operations on $\TW_l(S)$. We list these within the
(long!) definition-proposition below.

\footnotetextT{We could have equally well used the ``upper'' Lie embedding
$u$, setting $\glosm{Eu}{E_u}\glosm{parenu}{(\lambda;\,\omega)_u}
\coloneqq \exp(\iota\omega)\exp(\glosm{u}{u}\lambda)$, with only minor
modifications to the formulas that follow.}

\begin{defprop} \label{dp:ElOps}
The bijection $E_l$ intertwines the 
operations defined below with the operations in
Definition~\ref{def:Operations}:\footnoteC{We cannot verify
Definition-Proposition~\ref{dp:ElOps} {\it per se} on the computer, as we have
no direct computer implementation of $\calA^w$. Indeed, the whole point
of this paper is to provide an implementation of $\calA^w$ by means of
$E_l$ (and later, $E_s$ and $E_f$). Instead, we verify below that many
properties of operations on $\calA^w$ (the associativity of the stacking
product, etc.)  indeed hold for their $E_l$ implementations. We start
by setting the values of some ``sample'' elements on which we will run
our tests (note that on the computer we represent $(\lambda;\,\omega)_l$
as {\tt El[$\lambda$,$\omega$]}):

\dialoginclude{ElSetup}
}

\begin{enumerate}[leftmargin=*,labelindent=0pt]

\item If $S_1\cap S_2=\emptyset$ and $(\lambda_i;\,\omega_i)_l\in\TW_l(S_i)$,
$\glosm{lsqcup}{\null}$
\begin{equation} \label{eq:ElCup}
  (\lambda_1;\,\omega_1)_l(\lambda_2;\,\omega_2)_l
  = (\lambda_1;\,\omega_1)_l\yellowm{\sqcup}(\lambda_2;\,\omega_2)_l
  \coloneqq (\lambda_1\sqcup\lambda_2;\,\omega_1+\omega_2)_l,
\end{equation}
where $\sqcup\colon\FL(S_1)^{S_1}\times\FL(S_2)^{S_2}\to\FL(S_1\sqcup
S_2)^{S_1\sqcup S_2}$ is the union operation of functions (or, in computer
speak, the concatenation of associative arrays) followed by
the inclusions $\FL(S_i)\to\FL(S_1\sqcup S_2)$, and $\omega_1+\omega_2$
is defined using the inclusions $\CW(S_i)\to\CW(S_1\sqcup S_2)$.

\item If $(\lambda_i;\,\omega_i)_l\in\TW_l(S)$,$\glosm{last}{\null}$
\begin{equation} \label{eq:ElProduct}
  (\lambda_1;\,\omega_1)_l\yellowm{\ast}(\lambda_2;\,\omega_2)_l
  \coloneqq (
    \BCH_{tb}(\lambda_1,\lambda_2);\,
    e^{-\partial_{\lambda_2}}(\omega_1)+\omega_2
  )_l.\footnotemarkC
\end{equation}

\footnotetextC{We quote the $E_l$ implementation of the stacking product
from \href{\web/AwCalculus.m}{\tt AwCalculus.m}
(\href{\web/AwCalculus.m}{\tt AC}) and verify that it is associative,
at least to degree $8$:
\ACQuote{ElStackingDef}

\shortdialoginclude{ElAssociativity}
}

\item If $(\lambda;\,\omega)_l\in\TW_l(S)$ and $a\in S$,$\glosm{ldeta}{\null}$
\begin{equation} \label{eq:ElEta}
  (\lambda;\,\omega)_l\act \yellowm{d\eta^a}
  \coloneqq ((\lambda\yellowm{\remove}a)\act(a\to 0);\,\omega\act(a\to 0))_l,
\end{equation}
where $\lambda\remove a$ denotes the function $\lambda$ with the element
$a$ removed from its domain (in computer talk, ``remove the key $a$''),
and $(a\to 0)$ denotes the substitution $a=0$, which is defined on both
$\FL$ and $\CW$ and maps $\FL(S)\to\FL(S\remove a)$ and
$\CW(S)\to\CW(S\remove a)$.\footnoteC{Example:

\shortdialoginclude{detaExample}
}

\item For a single $a\in S$, I don't know a simple description
of the operation $\dA^a$ in $E_l$ language\footnoteT{\label{foot:notsimple}
  A not-so-simple description would be to use the language of the factored
  presentation of Section~\ref{subsec:Ef}, converting back and forth
  using the results of Section~\ref{subsec:Conversion}.
}.
Yet the composition
$\glosm{ldA}{\dA}\coloneqq\yellowm{\dA^S}\coloneqq\prod_{a\in S}\dA^a$
is manageable: ($j$ is defined in Definition~\ref{def:j})
\begin{equation} \label{eq:ElA}
  (\lambda;\,\omega)_l\act \dA^S \coloneqq
  (-\lambda;\,e^{\partial_\lambda}(\omega)-j(\lambda))_l.\footnotemarkC
\end{equation}

\footnotetextC{We quote the computer-definition of $\dA$, compute an
example, verify that $\dA$ is an involution, and then that it is an
anti-homomorphism relative to the stacking product:
\ACQuote{EldA}

\shortdialoginclude{dA1}

\shortdialoginclude{dA2}

\shortdialoginclude{dA3}
}

\addtocounter{footnoteT}{-1}%\addtocounter{HfootnoteT}{-1}
\item For a single $a\in S$, I don't know a simple description
of the operation $\dS^a$ in $E_l$
language\footnotemarkT. Yet the composition
$\glosm{ldS}{\dS}\coloneqq\yellowm{\dS^S}\coloneqq\prod_{a\in S}\dS^a$
is manageable:
\begin{equation} \label{eq:ElS}
  (\lambda;\,\omega)_l\act \dS^S
  \coloneqq (-\lambda\act(-1)^{\deg};\,
    (e^{\partial_\lambda}(\omega)-j(\lambda))\act(-1)^{\deg})_l.\footnotemarkC
\end{equation}

\footnotetextC{An example:

\shortdialoginclude{dS}
}

\addtocounter{footnoteT}{-1}%\addtocounter{HfootnoteT}{-1}
\item I don't know a simple description
of the operation $dm^{ab}_c$ in $E_l$ language\footnotemarkT. Yet note that
Equation~\eqref{eq:multiplem} implies that ``applying $dm$ to all strands''
is manageable, being the stacking product described
in~\eqref{eq:ElProduct}.

\item We have\glosm{ldDelta}{$\null$}
\begin{equation} \label{eq:ElDelta}
  (\lambda;\,\omega)_l\act \yellowm{d\Delta}^a_{bc} \coloneqq (
    (\lambda\remove a)\sqcup(b\to\lambda_a,\,c\to\lambda_a)
      \act (a\to b+c);\,
    \omega \act (a\to b+c)
  )_l,
\end{equation}
where $(a\to b+c)$ denotes the obvious replacement of the generator $a$
with the sum $b+c$. It represents morphisms $\FL(S)\to\FL((S\remove
a)\sqcup \{b,c\})$, $\FL(S)^H\to\FL((S\remove a)\sqcup \{b,c\})^H$ (for any
set $H$), and  $\CW(S)\to\CW((S\remove a)\sqcup \{b,c\})$.\footnoteC{The
computer-definition, an example, and then a verification that $d\Delta$
is homomorphism relative to
the stacking product:
\ACQuote{EldDelta}

\shortdialoginclude{dD1}

\shortdialoginclude{dD2}
}

\item We have\glosm{ldsigma}{$\null$}
\begin{equation} \label{eq:ElSigma}
  (\lambda;\,\omega)_l\act \yellowm{d\sigma}^a_b \coloneqq (
    ((\lambda\remove a)\sqcup(b\to\lambda_a))\act (a\to b);\,
    \omega\act (a\to b)
  )_l,
\end{equation}
where $(a\to b)$ denotes the obvious ``generator renaming''
morphisms $\FL(S)\to\FL((S\remove a)\sqcup b)$, $\FL(S)^H\to\FL((S\remove
a)\sqcup b)^H$ (for any set $H$), and $\CW(S)\to\CW((S\remove a)\sqcup b)$.

\end{enumerate}
\end{defprop}

\begin{proof} Equations \eqref{eq:ElCup}, \eqref{eq:ElEta},
\eqref{eq:ElDelta}, and \eqref{eq:ElSigma} are trivial and
were stated only to introduce notation. The tree-level
part of Equation~\eqref{eq:ElProduct} follows from the
fact that $l$ is a morphism of Lie algebras (see within the
proof of~\cite[Proposition~\ref{2-prop:Pnses}]{WKO2}). The
wheels part of Equation~\eqref{eq:ElProduct} follows
from~\cite[Remark~\ref{2-rem:tderontr}]{WKO2}. Equation~\eqref{eq:ElA}
follows from the observation that $\dA^S$ is the adjoint map
$\ast$ of~\cite[Definition~\ref{2-def:Adjoint}]{WKO2} and then
from~\cite[Proposition~\ref{2-prop:Jandj}]{WKO2}. Equation~\eqref{eq:ElS}
is the easily-established fact that on $\calA^w$,
$\dS^S=(-1)^{\deg}\dA^S$. \qed
\end{proof}

  \parpic[l]{%
  \hspace{-6pt}}%
  \noindent%
 Note that the absence of simple descriptions of $dA^a$, $\dS^a$,
and $dm^{ab}_c$ in the $E_l$ language is fatal for its applicability to
knot theory, as these operations are needed within the computation of 
knot and tangle invariants. See Section~\ref{subsec:TangleInvariants}.

\refstepcounter{theorem}
  \parpic[l]{%
  \hspace{-8pt}}%
  \noindent%
 {\em Comment}~\thetheorem. \label{com:ElAT}
Let $\glosm{piT}{\pi_T}\colon\TW(S)\to\FL(S)^S$ denote the projection
onto the first factor (``trees'') of $\TW(S)=\FL(S)^S\times\CW(S)$,
and recall that up to a minor central factor, $(\FL(S)^S,\,tb)$
is $\tder_S$. Recall also that $\tder_S$ is the Lie algebra of
$\TAut_S$, and that elements of $\tder_S$ represent elements of
$\TAut_S$ by exponentiation. With this in mind, the tree part of
Equation~\eqref{eq:ElProduct} becomes the product of $\TAut_S$.
In other words, the diagram
\[ \xymatrix{
  \TW_l(S)\times\TW_l(S) \ar[r]^-\ast
    \ar[d]_{(\pi_T\act\exp)\times(\pi_T\act\exp)} &
  \TW_l(S) \ar[d]^{\pi_T\act\exp} \\
  \TAut_S\times\TAut_S \ar[r]^-{\text{mult.}} &
  \TAut_S
} \]
is commutative. Hence the $E_l$ presentation is valuable for~\cite{AT} as
many of the~\cite{AT} equations involve the group structure of $\TAut_S$.

\draftcut
\parpic[r]{\begin{picture}(0,0)%
\includegraphics{figs/AwDiagram.pstex}%
\end{picture}%
%
%  pstex_opts: -m 0.6 
%
\setlength{\unitlength}{2368sp}%
\begingroup\makeatletter\ifx\SetFigFont\undefined%
\gdef\SetFigFont#1#2#3#4#5{%
  \reset@font\fontsize{#1}{#2pt}%
  \fontfamily{#3}\fontseries{#4}\fontshape{#5}%
  \selectfont}%
\fi\endgroup%
\begin{picture}(2526,3034)(-62,-1594)
\end{picture}%
}
\subsection{The factored presentation $E_f$ of $\calA^w_{\exp}$ and its
stronger precursor $E_s$} \label{subsec:Ef}
Following~\cite{KBH}, in the ``factored'' presentation $E_f$
of $\calA^w_{\exp}$ arrow heads are treated separately from arrow
tails in diagrams such as the one on the right.  This presentation of
$\calA^w_{\exp}$ is more complicated than the previous one, yet it is also
more powerful, and in some sense, it is made of simpler ingredients. We
first enlarge the collection of spaces $\{\calA^w(S)\}$ to a somewhat
bigger collection $\{\calA^w(H;T)\}$ on which a larger class of operations
act. The new operations are more ``atomic'' than the old ones, in the
sense that each of the operations of Definition~\ref{def:Operations} is
a composition of 2-3 of the new operations. The advantage is that the
new operations all have reasonably simple descriptions as operations
on the group-like subsets $\{\calA^w_{\exp}(H;T)\}$ (the ``split''
presentation $E_s$ below), and hence even the few operations whose description
in the $E_l$ presentation was omitted in Definition-Proposition~\ref{dp:ElOps}
can be fully described and computed in the $E_f$ presentation.

A sketch of our route is as follows: In Section~\ref{sssec:Family},
right below, we describe the spaces $\{\calA^w(H;T)\}$. In
Section~\ref{sssec:AHTOperations} we describe the zoo of operations acting
on $\{\calA^w(H;T)\}$. Section~\ref{sssec:AHTExp} is the tofu of the matter
--- we describe the operations of the previous section in
terms of spaces $\{\TW_s(H;T)\}$ of trees and wheels, whose elements are in a
bijection $E_s$ with the group like elements of
$\{\calA^w(H;T)\}$. Finally in Section~\ref{sssec:Inclusion} we explain
how the system of spaces $\{\calA^w(S)\}$ includes into the system
$\{\calA^w(H;T)\}$ and how the operations of the former are expressed
in terms of the latter, concluding the description of $E_f$.

\subsubsection{The family $\{\calA^w(H;T)\}$} \label{sssec:Family}

Let $\glosm{H}{H} = \{\glosm{h}{h_1},h_2,\ldots\}$ be some finite set of
``head labels'' and let $\glosm{T}{T} = \{\glosm{t}{t_1},t_2,\ldots\}$
be some finite set of ``tail labels'' (these sets need not be of the
same cardinality). Let $\glosm{calAwHT}{\calA^w(H;T)}$ be
$\calA^w({H\sqcup T})$\footnotemarkT\ moded out by the following
further relations:

\footnotetextT{\label{foot:BruteDisjoint} We will often use sets of labels
$H$ and $T$ that are {\em not} disjoint. The notation
``$H\glosm{Brutesqcup}{\sqcup}T$'' stands for the union of $H$ and
$T$, made disjoint by brute force; for example, by setting $H\sqcup
T\coloneqq(\{h\}\times H)\cup(\{t\}\times T)$, where $h$ and $t$ are two
distinct labels chosen in advance to indicate ``heads'' and ``tails''. In
practise we will keep referring to the images of the elements of $H$
within $H\sqcup T$ as $h_i$ rather than $(h,h_i)$, and likewise for
the $t_i$'s.  We will mostly avoid the confusion that may arise when
$H\cap T\neq\emptyset$ by labelling operations as ``head operations'' which
will always refer to labels in $H\hookrightarrow H\sqcup T$ or as ``tail
operations'', when referring to labels in $T\hookrightarrow H\sqcup T$.}

\parbox[t]{4in}{\begin{myitemize}
\item If an arrow tail lands anywhere on a head strand ($\ast1$ on the
right), the whole diagram is zero.
\item The $\glosm{CP}{CP}$ relation: If an arrow head is the lowest vertex
on a tail strand ($\ast2$ on the right), the whole diagram is zero. (As
on the right, we indicate the bottom ends of tail strands with bullets
``$\bullet$'').
\end{myitemize}}
\hfill\imagetop{\input{figs/HeadTailRels.pstex_t}}

\Needspace{2in}
\vskip 1mm
{
\makeatletter\def\thm@space@setup{%
  \thm@preskip=0cm plus 0cm minus 0cm %\thm@postskip=\thm@preskip
}\makeatother
\parpic[r]{\begin{picture}(0,0)%
\includegraphics{figs/TypicalAHT.pstex}%
\end{picture}%
%
%  pstex_opts: -m 0.9 
%
\setlength{\unitlength}{3552sp}%
\begingroup\makeatletter\ifx\SetFigFont\undefined%
\gdef\SetFigFont#1#2#3#4#5{%
  \reset@font\fontsize{#1}{#2pt}%
  \fontfamily{#3}\fontseries{#4}\fontshape{#5}%
  \selectfont}%
\fi\endgroup%
\begin{picture}(1074,1651)(64,-800)
\put(976,-736){\makebox(0,0)[b]{\smash{{\SetFigFont{11}{13.2}{\rmdefault}{\mddefault}{\updefault}{\color[rgb]{0,0,0}$T$}%
}}}}
\put(226,-736){\makebox(0,0)[b]{\smash{{\SetFigFont{11}{13.2}{\rmdefault}{\mddefault}{\updefault}{\color[rgb]{0,0,0}$H$}%
}}}}
\end{picture}%
}
\begin{comment} \label{com:PureForm}
Using these two relations one may show that $\calA^w(H;T)$
is isomorphic to the set of arrow diagrams in which only arrow heads
land on the head strands (obvious, by the first relation) and in which
only arrow tails meet the tail strands (use $\aSTU_2$ to slide any arrow
head on a tail strand until it's near the bottom, then use the second
relation; see also Comment~\ref{com:SortedForm}), still modulo $\aAS$,
$\aIHX$, $\aSTU_1$ and $TC$. Thus a typical element of $\calA^w(H;T)$
is shown on the right.
\end{comment}
}

  \parpic[l]{%
  \hspace{-6pt}}%
  \noindent%
 In topology (see~\cite{KBH}), head strands correspond to
``hoops'', or based knotted circles, and tail strands correspond to
balloons, or based knotted spheres. The two relations and the isomorphism
above are also meaningful~\cite{KBH}.

  \parpic[l]{%
  \hspace{-6pt}}%
  \noindent%
 In Lie theory head strands represent $\calU(\frakg)$ and tail strands
represent the (right) Verma module
$\calU(I\frakg)/\frakg\calU(I\frakg) \simeq \calU(\frakg^\ast) \simeq
\calS(\frakg^\ast)$. The evaluation $\frakg^\ast\to 0$ induces a surjection
of $\calU(I\frakg)$ onto the first of these spaces whose kernel is ``any
word containing a letter in $\frakg^\ast$'', explaining the first relation
above. The second relation is the definition of the Verma module. 

\Needspace{2cm}
\subsubsection{Operations on $\{\calA^w(H;T)\}$.}
\label{sssec:AHTOperations}

\begin{definition} \label{def:AHTOperations}
Just as in Definition~\ref{def:Operations}, there are several
operations that are defined on $\calA^w(H;T)$. In brief, these are:

\begin{enumerate}[leftmargin=*,labelindent=0pt]

\item A union operation $\glosm{htsqcup}{\sqcup}\colon \calA^w(H_1;T_1)
\otimes \calA^w(H_2;T_2) \to \calA^w(H_1\sqcup H_2;T_1\sqcup T_2)$,
defined when $H_1\cap H_2=T_1\cap T_2=\emptyset$, with obvious topological
(compare with ``$\ast$'' of~\cite[Figure~\ref{KBH-fig:ConnectedSum}]{KBH})
and Lie theoretic meanings. (The symbol $\sqcup$ is sometimes omitted:
$D_1D_2\coloneqq D_1\sqcup D_2$).

\item A ``stacking'' product $\glosm{jail}{\#}$ can be defined on
$\calA^w(H;T)$ by stitching all pairs of equally-labelled head strands
and then merging all pairs of equally-labelled tail strands in a
pair of diagrams $D_1,D_2\in\calA^w(H;T)$. The ``merging'' of tail
strands is described in more detail as the operation $tm$ below. In
fact, it may be better to define $\#$ using a formula similar to
Equation~\eqref{eq:multiplem} and the operations $hm$, $tm$, $h\sigma$,
and $t\sigma$ defined below:
\begin{equation} \label{eq:AHTStacking}
  D_1\#D_2 = \left(
    D_1\sqcup\left(D_2
      \act\prod_{x\in H}h\sigma^x_{\bar{x}}
      \act\prod_{u\in T}t\sigma^u_{\bar{u}}
    \right)
  \right)
    \act\prod_{x\in H}hm^{x\bar{x}}_x
    \act\prod_{u\in T}tm^{u\bar{u}}_u.
\end{equation}

  \parpic[l]{\,%
  \hspace{-6pt}}%
  \noindent%
 In topology, $\#$ is the stitching of hoops followed by the
merging of balloons; this is not the same as the stitching of knotted
tubes. In Lie theory, $\#$ corresponds to the componentwise product of
$\calU(\frakg)^{\otimes H}\otimes\calS(\frakg^\ast)^{\otimes T}$. Even when
$H$ and $T$ are both singletons, this is not the same as the product of
$\calU(I\frakg)$, even though linearly
$\calU(I\frakg)\simeq\calU(\frakg)\otimes\calS(\frakg^\ast)$.

\setcounter{tunnel}{\theenumi}
\end{enumerate}
\begin{enumerate}[leftmargin=*,labelindent=0pt]\setcounter{enumi}{\thetunnel}

\item If $\glosm{xinH}{x}\in H$ and $\glosm{uinT}{u}\in T$, the operations
$\glosm{heta}{h\eta}^x$ and $\glosm{teta}{t\eta}^u$ drop the head-strand
$x$ or the tail-strand $u$ similarly to the operation $d\eta^a$ of
Definition~\ref{def:Operations}.

\item $\glosm{hA}{hA}^x$ reverses the head-strand $x$ while multiplying
by a $(-1)$ factor for every arrow head on $x$. $\glosm{tA}{tA}^u$
is the identity.

\item $\glosm{hS}{hS}^x=hA^x$ while $\glosm{tS}{tS}^u$ multiplies by a
factor of $(-1)$ for every arrow tail on $u$ (by $TC$, there's no need
to reverse $u$).

\item The operation $\glosm{hm}{hm}^{xy}_z$ is defined similarly
to $dm^{ab}_c$ of Definition~\ref{def:Operations}. Likewise for
$\glosm{tm}{tm}^{uv}_w$, except in this case, the tail-strands $u$
and $v$ must first be cleared of all arrow-heads using the process of
Comment~\ref{com:PureForm}. Once $u$ and $v$ carry only arrow-tails,
all these tail can be put on a new tail-strand $w$ in some arbitrary
order (which doesn't matter, by $TC$). Note that $tm^{uv}_w=tm^{vu}_w$,
so $tm$ is ``meta-commutative''.

  \parpic[l]{%
  \hspace{-6pt}}%
  \noindent%
 In topology, $tm^{uv}_w$ is the ``merging of balloons''
operation of \cite[Section~\ref{KBH-subsec:MMAOperations}]{KBH}, which
in itself is analogues to the (commutative) multiplication of $\pi_2$.

  \parpic[l]{%
  \hspace{-6pt}}%
  \noindent%
 In Lie theory, $tm^{uv}_w$ is the product of $\calS(\frakg^\ast)$.
Note that tail strands more closely represent the Verma module
$\calU(I\frakg)/\frakg\calU(I\frakg)$ whose isomorphism with
$\calS(\frakg^\ast)$ involves ``sliding all $\frakg$-letters in a
$\calU(I\frakg)$-word to the left and then cancelling them''. This is
analogous to the process of cancelling arrow-heads which is a
pre-requisite to the definition of $tm^{uv}_w$.

\setcounter{tunnel}{\theenumi}
\end{enumerate}
\begin{enumerate}[leftmargin=*,labelindent=0pt]\setcounter{enumi}{\thetunnel}

\item $\glosm{hDelta}{h\Delta}^x_{yz}$ and
$\glosm{tDelta}{t\Delta}^u_{vw}$ are defined similarly to
$d\Delta^a_{bc}$.

\item $\glosm{hsigma}{h\sigma}^x_y$ and $\glosm{tsigma}{t\sigma}^u_v$
are defined similarly to $d\sigma^a_b$.

\item \textbf{New!} Given a tail $u\in T$, a ``new'' tail
label $v\not\in T\remove u$ and a head $x\in H$ the operation
$\glosm{thm}{thm}^{ux}_v\colon\calA^w(H;T)\to\calA^w(H\remove x;(T\remove
u)\sqcup\{v\})$ is the obvious ``tail-strand head-strand stitching''
--- similarly to $dm^{ab}_c$, stitch the strand $u$ to the strand $x$
putting $u$ before $x$, and call the resulting ``new'' strand $v$. Note
that for this to be well defined, $v$ must be a tail strand.\footnoteT{
Note also that the analogous operation $htm^{xu}_v$ ``put $x$ before $u$
to get a tail $v$'' is $0$ and hence we can safely ignore it, and that
$thm^{xu}_y$ and $htm^{xu}_y$, defined in the same way as $thm^{ux}_v$
and $htm^{xu}_v$ except to produce a head strand $y$, are not well
defined because they do not respect the $CP$ relation.}

In practise, $thm^{ux}_v$ is never used on its own, but the combination
$h\Delta^x_{xx'}\act thm^{ux'}_u$ (where $x'$ is a temporary label) is
very useful. Hence we set
$\glosm{tha}{tha}^{ux}\colon\calA^w(H;T)\to\calA^w(H;T)$ (``tail by head
action on $u$ by $x$'') to be that combination. In words, this is ``double
the strand $x$ and put one of the copies on top of $u$''.\footnoteT{Note
that $thm^{ux}_v=tha^{ux}\act h\eta^x\act t\sigma^u_v$ so we lose no
generality by considering $tha^{ux}$ instead of $thm^{ux}_v$.}

  \parpic[l]{\,%
  \hspace{-6pt}}%
  \noindent%
 In topology, $tha$ is the action of hoops on balloons as
in~\cite[Section~\ref{KBH-subsec:MMAOperations}]{KBH}, which is similar to
the action of $\pi_1$ on $\pi_2$. In Lie theory, it is the right action of
$\calU(\frakg)$ on the Verma module $\calU(I\frakg)/\frakg\calU(I\frakg)$,
or better, the action of $\calU(\frakg)$ on $\calS(\frakg^\ast)$ induced
from the co-adjoint action of $\frakg$ on $\frakg^\ast$.
\endpar{def:AHTOperations}

\end{enumerate}
\end{definition}

  \parpic[l]{\,%
  \hspace{-6pt}}%
  \noindent%
 {\em Exercise \refstepcounter{theorem}\thetheorem.} In the cases
when we did not state the topological or Lie theoretical meaning of an
operation in Definition~\ref{def:AHTOperations}, find what it is.

\subsubsection{Group-like elements in $\{\calA^w(H;T)\}$.}
\label{sssec:AHTExp}

For any fixed finite sets $H$ and $T$ there is a co-product
$\glosm{htBox}{\Box}\colon\calA^w(H;T)\otimes\calA^w(H;T)$ defined just
as in the case of $\calA^w(S)$ (Definition~\ref{def:GroupLike}),
and along with the product $\#$ (and obvious units and co-units),
$\calA^w(H;T)$ is a graded connected co-commutative bi-algebra. Hence
it makes sense to speak of the group-like elements $\glosm{AwHTexp}{\calA^w_{\exp}(H;T)}$
within $\calA^w(H;T)$, and they are all $\#$-exponentials of primitives in
$\calA^w(H;T)$. The primitives $\glosm{AprimwHT}{\calA_{\text{prim}}^w(H;T)}$ in
$\calA^w(H;T)$ are connected diagrams and hence they are trees and
wheels. As in Comment~\ref{com:PureForm}, the trees must have their
roots on head strands and their leafs on tail strands, and the
wheels must have all their ``legs'' on tail strands. As tails commute, we
may think of the trees as abstract trees with leafs labelled by labels in
$T$ and roots in $H$, and the wheels are abstract cyclic words with letters
in $T$. Hence canonically $\calA_{\text{prim}}^w(H;T)\simeq\FL(T)^H\oplus\CW(T)$ and
hence there is a bijection (called ``the split presentation $E_s$'')\glosm{Es}{$\null$}
\begin{equation} \label{eq:EsHT}
  \yellowm{E_s}\colon\yellowm{\TW_s(H;T)}\coloneqq\FL(T)^H\oplus\CW(T)
    \overset{\sim}{\longrightarrow} \calA^w_{\exp}(H;T)
\end{equation}
defined on an ordered pair $\glosm{parens}{(\lambda;\,\omega)_s}$ in
$\TW_s(H;T)$ by
\begin{equation} \label{eq:esHT}
  (\lambda;\,\omega)_s\mapsto
    \exp_\#\left(e_s(\lambda;\omega)\right),
\end{equation}

\Needspace{2in}
\parpic[r]{\parbox{1.75in}{
  \centering{\input{figs/Es.pstex_t}}
  \begin{figcap} \label{fig:Es} $E_s(\lambda;\,\omega)_s$. \end{figcap}
}}
\noindent where $\glosm{es}{e_s}(\lambda;\omega)_s$ is the sum over $x\in
H$ of planting $\lambda_x$ with its root on strand $x$ and its leafs on
the strands in $T$ so that the labels match but at an arbitrary order
on any $T$ strand, plus the result of planting $\omega$ on just the $T$
strands so that the labels match but at an arbitrary order on any $T$
strand. A pictorial representation of $E_s(\lambda;\,\omega)_s$, using
the same visual language as in Figure~\ref{fig:El}, appears on the right.

It is easy to verify that the
operations in Definition~\ref{def:AHTOperations} intertwine $\Box$ and
hence map group-like elements to group-like elements and hence they induce
operations on $\TW_s(H;T)$. These are summarized within the following
definition-proposition.

\begin{defprop} \label{dp:EsOps} The bijection $E_s$ intertwines the
operations defined below with the operations in
Definition~\ref{def:AHTOperations}:\footnoteT{Here
we no longer state conditions such as $H_1\cap H_2=\emptyset$,
$u\in T$, $x\in H$. They are the same as in
Definition~\ref{def:AHTOperations}, and more importantly, they are ``what
makes sense''.}\,\footnoteC{We quote from
\href{\web/AwCalculus.m}{\tt AwCalculus.m} only the most interesting
implementations --- of $\sqcup$~\eqref{eq:EsCup}, of $hm$~\eqref{eq:Eshm},
of $tm$~\eqref{eq:Estm}, and of $tha$~\eqref{eq:Estha}. Then we set
the values of two ``sample'' elements in the $E_s$ presentation
(on the computer we represent $(\lambda;\,\omega)_s$ as {\tt
Es[$\lambda$,$\omega$]}):
\ACQuote{EsSampleDefs}

\shortdialoginclude{EsSetup1}

\shortdialoginclude{EsSetup2}

(Note that the second of sample elements was set to be a random series,
with a seed of $0$. It is printed only to degree $2$, but it extends
indefinitely as a random series.)
}

\begin{enumerate}[leftmargin=*,labelindent=0pt]

\item
$\ds
  (\lambda_1;\,\omega_1)_s(\lambda_2;\,\omega_2)_s
  = (\lambda_1;\,\omega_1)_s\glosm{ssqcup}{\sqcup}(\lambda_2;\,\omega_2)_s
  \coloneqq (\lambda_1\sqcup\lambda_2;\,\omega_1+\omega_2)_s
$\hfill\inlineeq\label{eq:EsCup}

\item
$\ds
  (\lambda_1;\,\omega_1)_s\glosm{sjail}{\#}(\lambda_2;\,\omega_2)_s
  \coloneqq \left(
    (x\to\BCH(\lambda_{1x},\lambda_{2x}))_{x\in H}
    ;\,
    \omega_1+\omega_2
  \right)_s
$\hfill\inlineeq\label{eq:EsProduct}

\item
$\ds
  (\lambda;\,\omega)_s \act \glosm{sheta}{h\eta}^x
  \coloneqq (\lambda\remove x;\,\omega)_s
$\hfill\inlineeq\label{eq:EshEta}
\newline\hspace{16pt}
$\ds (\lambda;\,\omega)_s \act \glosm{steta}{t\eta}^u
  \coloneqq (\lambda\act(u\to 0) ;\, \omega\act(u\to 0))_s
$\hfill\inlineeq\label{eq:EstEta}

\item
$\ds
  (\lambda;\,\omega)_s \act \glosm{shA}{hA}^x
  \coloneqq ((\lambda\remove x)\sqcup(x\to-\lambda_x) ;\, \omega)_s
$\hfill\inlineeq\label{eq:EshA}
\newline\hspace{16pt}
$\ds \glosm{stA}{tA}^u \coloneqq I $\hfill\inlineeq\label{eq:EstA}

\item
$\ds \glosm{shS}{hS}^x\coloneqq hA^x,$\hfill\inlineeq\label{eq:EshS}
\newline\hspace{16pt}
$\ds
  (\lambda;\,\omega)_s \act \glosm{stS}{tS}^u
  \coloneqq (\lambda\act(u\to -u) ;\, \omega\act(u\to -u))_s
$\hfill\inlineeq\label{eq:EstS}

\item
$\ds
  (\lambda;\,\omega)_s \act \glosm{shm}{hm}^{xy}_z
  \coloneqq ((\lambda\remove\{x,y\})\sqcup(z\to\BCH(\lambda_x, \lambda_y));\,\omega)_s
$\hfill\inlineeq\label{eq:Eshm}
\newline\hspace{16pt}$\ds 
  (\lambda;\,\omega)_s \act \glosm{stm}{tm}^{uv}_w
  \coloneqq (\lambda\act(u,v\to w) ;\, \omega\act(u,v\to w))_s
$\hfill\inlineeq\label{eq:Estm}

\item
$\ds 
  (\lambda;\,\omega)_s \act \glosm{shDelta}{h\Delta}^x_{yz}
  \coloneqq ((\lambda\remove x)\sqcup(y\to\lambda_x, z\to\lambda_x);\,\omega)_s
$\hfill\inlineeq\label{eq:EshDelta}
\newline\hspace{16pt}$\ds
  (\lambda;\,\omega)_s \act \glosm{stDelta}{t\Delta}^u_{vw}
  \coloneqq (\lambda\act(u\to v+w) ;\, \omega\act(u\to v+w))_s
$\hfill\inlineeq\label{eq:EstDelta}

\item
$\ds
  (\lambda;\,\omega)_s \act \glosm{shsigma}{h\sigma}^x_y
  \coloneqq ((\lambda\remove x)\sqcup(y\to\lambda_x);\,\omega)_s
$\hfill\inlineeq\label{eq:EshSigma}
\newline\hspace{16pt}$\ds
  (\lambda;\,\omega)_s \act \glosm{stsigma}{t\sigma}^u_v
  \coloneqq (\lambda\act(u\to v) ;\, \omega\act(u\to v))_s
$\hfill\inlineeq\label{eq:EstSigma}

\item $\ds
  (\lambda;\,\omega)_s \act \glosm{stha}{tha}^{ux}
  \coloneqq \left(
    \lambda \act RC_u^{\lambda_x} ;\,
    (\omega+J_u(\lambda_x))\act RC_u^{\lambda_x}
  \right)_s.
$\hfill\inlineeq\label{eq:Estha}
\end{enumerate}
\end{defprop}

\noindent{\em Proof.} The first 8
assertions (14 operations) are very easy. The main challenge to the reader
should be to gather her concentration for the 14-times repetitive task
of unwrapping definitions. If you are ready to cut corners, only go over
\eqref{eq:EsCup}, \eqref{eq:Eshm}, \eqref{eq:Estm}, \eqref{eq:EshDelta},
and \eqref{eq:EstDelta}. Let us turn to the proof of the last assertion,
Equation~\eqref{eq:Estha}. That proof is in fact in~\cite{KBH}, or
at least can be assembled from pieces already in~\cite{KBH}. Yet the
assembly would be a bit delicate, and hence a proof is reproduced below
which refers back to~\cite{KBH} only at one technical point.

By inspecting the definition of $tha^{ux}$, it is clear that there
is {\em some} assignment $\gamma\mapsto R_u^\gamma$ that assigns an
operator $R_u^\gamma\colon\FL(T)\to\FL(T)$ to every $\gamma\in\FL(T)$
and that there is {\em some} functional $K_u\colon\FL(T)\to\CW(T)$,
for which a version of Equation~\eqref{eq:Estha} holds:

\begin{equation} \label{eq:Esthap}
  E_s(\lambda;\,\omega)_s \act tha^{ux}
  = E_s\left(
    \lambda \act R_u^{\lambda_x} ;\,
    (\omega+K_u(\lambda_x))\act R_u^{\lambda_x}
  \right)_s
\end{equation}

Indeed, $tha^{ux}$ acts on $E_s(\lambda;\,\omega)_s$ by placing a copy
of $\exp(\lambda_x)$ at the top of the tail strand $u$, and then
re-writing the result without having any heads on strand $u$ so as to
invert $E_s$ back again. The re-writing is done by sliding the heads of
$\exp(\lambda_x)$ down to the bottom of strand $u$, where they cancel
by $CP$. Every time a head slides past a tail we get a contribution from
$\aSTU_2$. Sometimes a head of a $\lambda_x$ will slide against a tail of
another $\lambda_x$, whose head will have to slide down too, leading to
a rather complicated iterative process. Nevertheless, these contributions
are the same for every tail on strand $u$, namely for every occurrence of
the variable $u$ in $\FL(T)^H$ and/or in $\CW(T)$. This explains the terms
$\lambda \act R_u^{\lambda_x}$ and $\omega \act R_u^{\lambda_x}$ in
Equation~\eqref{eq:Esthap}. We note that the degree $0$ part of the
operator $R_u^{\lambda_x}$ is the identity, and hence it is invertible.

But yet another type of term arises in the process --- sometimes a head of
some tree will slide against a tail of its own, and then the contribution
arising from $\aSTU_2$ will be a wheel. Hence there is an additional
contribution to the output, some $L_u(\lambda_x)$ which clearly can depend
only on $u$ and $\lambda_x$. Using the invertibility of $R_u^{\lambda_x}$
to write $L_u(\lambda_x)=K_u(\lambda_x)\act R_u^{\lambda_x}$ we completely
reproduce Equation~\eqref{eq:Esthap}.

We now need to show that $R_u^\gamma$ and $K_u(\gamma)$ are $RC_u^\gamma$
and $J_u(\gamma)$ of Definitions~\ref{def:CRC} and~\ref{def:J}. Tracing
again through the discussion in the previous two paragraphs, we see that at
any fixed degree, $R_u^\gamma$ and $K_u(\gamma)$ depend polynomially on the
coefficients of $\gamma$, and hence it is legitimate to study their
variation with
respect to $\gamma$. It is also easy to verify that $R_u^0=RC_u^0=I$ and
that $K_u(0)=J_u(0)=0$, and hence it is enough to show that, with an
indeterminate scalar $\tau$,
\begin{equation} \label{eq:DerEqns}
  \frac{d}{d\tau}R_u^{\tau\gamma}=\frac{d}{d\tau}RC_u^{\tau\gamma}
  \qquad\text{and}\qquad
  \frac{d}{d\tau}K_u(\tau\gamma)=\frac{d}{d\tau}J_u(\tau\gamma).
\end{equation}

Let us compute the left-hand-sides of the above equations. If $\tau$ is an
infinitesimal (so $\tau^2=0$), or more precisely, computing the above
left-hand-sides at $\tau=0$, we can re-trace the process described in the
two paragraphs following Equation~\eqref{eq:Esthap} keeping in mind that
with $\lambda_x=\tau\gamma$ the $\aSTU_2$ relation can only by applied
once (or else terms proportional to $\tau^2$ will arise). The result is
\begin{equation} \label{eq:DersAtZero}
  \left.\frac{d}{d\tau}R_u^{\tau\gamma}\right|_{\tau=0}
    = \ad_u^\gamma
  \qquad\text{and}\qquad
  \left.\frac{d}{d\tau}K_u(\tau\gamma)\right|_{\tau=0}
    = \atdiv_u(\gamma),
\end{equation}
where $\glosm{adugamma}{\ad_u^\gamma}\colon\FL(T)\to\FL(T)$
is the derivation which maps the generator $u$ of $\FL(T)$ to
$[\gamma,u]$ and annihilates all other generators of $\FL(T)$
(compare~\cite[Definition~\ref{KBH-def:adu}]{KBH}) and where
$\atdiv_u(\gamma)$ is the same as in Definition~\ref{def:J}.

Moving on to general $\tau$, we
note that the operations $hm$ and $tha$ satisfy\footnoteC{None 
should believe without a verification:

\shortdialoginclude{haction}
}
\begin{equation} \label{eq:haction}
  hm^{xy}_z\act tha^{uz}=tha^{ux}\act tha^{uy}\act hm^{xy}_z
\end{equation}
(stitching strands $x$ and $y$ and then stitching a copy
of the result to $u$ is the same as stitching a copy of $x$
to $u$, then a copy of $y$, and then stitching $x$ to $y$;
compare~\cite[Equation~\eqref{KBH-eq:haction}]{KBH}). Applying the
operators on the two sides of Equation~\eqref{eq:haction} to
$E_s(\lambda;\,\omega)$ (assuming $H$ and $T$ are such that it makes
sense), then expanding using~\eqref{eq:Eshm} and~\eqref{eq:Esthap}, and
then ignoring the wheels in the resulting
equality, we find that $R_u$ satisfies
\begin{equation} \label{eq:Rh}
  R_u^{\BCH(\lambda_x,\lambda_y)}
    = R_u^{\lambda_x}\act R_u^{\lambda_y\act R_u^{\lambda_x}}
\end{equation}
(compare~\cite[Equation~\eqref{KBH-eq:RCh}]{KBH}). Similarly, looking only
at the wheel part of~\eqref{eq:haction} we get
\[
  K_u(\BCH(\lambda_x,\lambda_y))\act R_u^{\BCH(\lambda_x,\lambda_y)}
    = K_u(\lambda_x)\act
      R_u^{\lambda_x}\act R_u^{\lambda_y\act R_u^{\lambda_x}}
      + K_u(\lambda_y\act R_u^{\lambda_x})\act
        R_u^{\lambda_y\act R_u^{\lambda_x}},
\]
which, composing on the right with $R_u^{\BCH(\lambda_x,\lambda_y)}$
and using~\eqref{eq:Rh}, is equivalent to
\begin{equation} \label{eq:Kh}
  K_u(\BCH(\lambda_x,\lambda_y))
    = K_u(\lambda_x)\act R_u^{\lambda_x}
      + K_u(\lambda_y\act R_u^{\lambda_x})\act C_u^{-\lambda_x}
\end{equation}
(compare~\cite[Equation~\eqref{KBH-eq:JhProperty}]{KBH}).

Equations~\eqref{eq:Rh} and~\eqref{eq:Kh} hold for any $\lambda$,
and hence for any $\lambda_x$ and $\lambda_y$. Specializing
to $\lambda_x=\tau\gamma$ and $\lambda_y=\epsilon\gamma$, where
$\epsilon$ is some new indeterminate scalar, and using the fact that
$\BCH(\tau\gamma,\epsilon\gamma) = (\tau+\epsilon)\gamma$,
Equations~\eqref{eq:Rh} and~\eqref{eq:Kh} become
\[
  R_u^{(\tau+\epsilon)\gamma}
    = R_u^{\tau\gamma}\act R_u^{\epsilon\gamma\act R_u^{\tau\gamma}}
  \qquad\text{and}\qquad
  K_u((\tau+\epsilon)\gamma)
    = K_u(\tau\gamma)\act R_u^{\tau\gamma}
      + K_u(\epsilon\gamma\act R_u^{\tau\gamma})\act C_u^{-\tau\gamma}.
\]
Now differentiating with respect to $\epsilon$ at $\epsilon=0$ and using
Equation~\eqref{eq:DersAtZero} with $\tau$ replaced with $\epsilon$, we get 
\[
  \frac{d}{d\tau}R_u^{\tau\gamma}
    = R_u^{\tau\gamma}\act\ad_u^{\gamma\act R_u^{\tau\gamma}}
  \qquad\text{and}\qquad
  \frac{d}{d\tau}K_u(\tau\gamma)
    = \atdiv_u(\gamma\act R_u^{\tau\gamma})\act C_u^{-\tau\gamma}.
\]

The first of these equations is the same equation that is satisfied by
$RC_u$ (see \cite[Lemma \ref{KBH-lem:dC}]{KBH}, with $\delta\gamma$
proportional to $\gamma$), and hence $R_u=RC_u$. By a simple change
of variables, $J_u(\tau\gamma)=\int_0^\tau dt\,\atdiv_u\!\left(
\gamma \sslash RC_u^{t\gamma} \right) \sslash C_u^{-t\gamma}$,
and hence $\frac{d}{d\tau}J_u(\tau\gamma) = \atdiv_u(\gamma\act
RC_u^{\tau\gamma})\act C_u^{-\tau\gamma}$ (compare with the
formula for the full differential of $J$, \cite[Proposition
\ref{KBH-prop:dJ}]{KBH}). Comparing with the above formula for the
derivative of $K_u$, we find that $K_u=J_u$. \qed

\subsubsection{The inclusion $\{\calA^w(S)\} \hookrightarrow
\{\calA^w(H;T)\}$.} \label{sssec:Inclusion}

The following definition and proposition imply that there is no
loss in studying the spaces $\calA^w(H;T)$ rather than the spaces
$\calA^w(S)$.

\begin{definition} Let
$\glosm{delta}{\delta}\colon\calA^w(S)\to\calA^w(S;S)$
be the composition of the ``double every strand'' map
$\prod_{a\in S}\Delta^a_{ha,ta} \colon \calA^w(S)
\to \calA^w({hS\sqcup tS})$ with the projection
$\calA^w({hS\sqcup tS})\to\calA^w(S;S)$ (as an exception to
the rule of Footnote~\ref{foot:BruteDisjoint} we temporarily highlight
the distinction between head and tail labels by affixing them with the
prefixes $h$ and $t$).
\end{definition}

\Needspace{1.5in}
{
\makeatletter\def\thm@space@setup{%
  \thm@preskip=0cm plus 0cm minus 0cm %\thm@postskip=\thm@preskip
}\makeatother
\parpic[r]{\input{figs/delta.pstex_t}}
\begin{comment} \label{com:delta} If $D\in\calA^w(S)$ is sorted
``heads below tails'' as in Comment~\ref{com:SortedForm}, then $\delta
D$ is $D$ with its arrow heads placed on the head strands and its arrow
tails placed on the tail strands, as shown on the right.
\end{comment}
}

\begin{proposition} $\delta$ is a (non-multiplicative) vector space
isomorphism. The inverse of $\delta$ on $D\in\calA^w(S;S)$ is given by
the process
\begin{enumerate}
\item Write $D$ with only arrow heads on the head strands and only arrow
tails on the tail strands. By Comment~\ref{com:PureForm} this produces a
well-defined element $D'$ of $\calA^w({hS\sqcup tS})$.
\item Stitch all the head-tail pairs of strands in
$D'$ by putting each head ahead of its corresponding tail: $\delta^{-1}D =
D'\act\prod_a dm^{ha,ta}_a$.
\end{enumerate}
\end{proposition}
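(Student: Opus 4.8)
The plan is to exhibit the two-step process in the statement as an explicit linear map $\phi\colon\calA^w(S;S)\to\calA^w(S)$ and to verify directly that $\phi\circ\delta$ and $\delta\circ\phi$ are both the identity; since $\delta$ is manifestly linear, producing a two-sided inverse is all that is required. The map $\phi$ is well-defined because step~(1)---passing to the representative carrying only heads on head strands and only tails on tail strands---is precisely the inverse of the projection $\calA^w(hS\sqcup tS)\to\calA^w(S;S)$, which is an isomorphism by Comment~\ref{com:PureForm}, while step~(2) is an application of the well-defined operations $dm^{ha,ta}_a$. Throughout I would reduce to the normal forms supplied by Comment~\ref{com:SortedForm} (every class has a representative with all heads below all tails on each strand) and Comment~\ref{com:PureForm}, invoking linearity to check all identities on such representatives.

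The heart of the argument, and the step I expect to be the main obstacle, is the explicit computation of $\delta$ on a \emph{sorted} diagram $D$---that is, the content of Comment~\ref{com:delta}. Applying the doubling maps $\prod_a d\Delta^a_{ha,ta}$ of Definition~\ref{def:Operations} expands $D$ into a sum over all ways of sending each arrow endpoint on strand $a$ to either the head strand $ha$ or the tail strand $ta$, and I must show that after projecting to $\calA^w(S;S)$ only the single ``fully split'' term---every head on its head strand, every tail on its tail strand---survives. The first relation (a tail on a head strand gives zero) immediately kills every term in which an arrow tail has been sent to a head strand, so in any surviving term all tails sit on tail strands. The subtle point is the heads, and here I would combine the $CP$ relation with the sorted form: because $D$ has all heads below all tails, if in some term even one head is sent to a tail strand $ta$, then the lowest head on $ta$ lies below every tail on $ta$ and is therefore the lowest vertex on $ta$, so that term vanishes by $CP$. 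Hence every head must go to its head strand, leaving exactly the split term.

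With Comment~\ref{com:delta} established the two composites are routine. For $\phi\circ\delta=\mathrm{id}$, take $D$ sorted; then $\delta D$ is its split form, already pure, so step~(1) of $\phi$ does nothing and step~(2) stitches each $ha$ below its $ta$ via $dm^{ha,ta}_a$, regluing the heads below the tails and recovering the sorted diagram $D$. For $\delta\circ\phi=\mathrm{id}$, take $D\in\calA^w(S;S)$ with pure representative $D'\in\calA^w(hS\sqcup tS)$; then $\phi D=D'\act\prod_a dm^{ha,ta}_a$ is \emph{automatically} sorted, since each stitch places the head-carrying strand below the tail-carrying strand, and applying $\delta$ then splits this sorted diagram back into its pure form by the computation of the previous paragraph, returning $D'$ and hence $D$. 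Thus $\phi=\delta^{-1}$, $\delta$ is a vector space isomorphism, and the displayed recipe computes its inverse; the non-multiplicativity noted in the statement is not claimed to require proof.
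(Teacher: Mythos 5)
Your proof is correct and follows the same route as the paper's: the paper's entire proof is the one-liner that ``$\delta^{-1}\act\delta=I$ by inspection, and $\delta\act\delta^{-1}$ is clearly the identity on diagrams sorted as in Comment~\ref{com:SortedForm}.'' The only substantive detail you add is the verification of Comment~\ref{com:delta} (that after doubling, the first relation kills terms with tails on head strands and the $CP$ relation kills terms with heads on tail strands, leaving only the fully split term), which the paper asserts without proof and which you argue correctly.
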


\begin{proof} $\delta^{-1}\act\delta=I$ by inspection, and
$\delta\act\delta^{-1}$ is clearly the identity on diagrams sorted to have
heads ahead of tails as in Comment~\ref{com:SortedForm}. \qed
\end{proof}

  \parpic[l]{\,%
  \hspace{-6pt}}%
  \noindent%
 In topology, $\delta$ agrees with the
$\delta$ of~\cite[Section~\ref{KBH-subsec:delta}]{KBH}. In Lie theory,
it agrees with the linear (non-multiplicative) isomorphism
$\calU(I\frakg)\simeq\calU(\frakg)\otimes\calS(\frakg^\ast)$ and with
similar isomorphisms considered by Etingof and Kazhdan within their work
on the quantization of Lie bialgebras~\cite{EtingofKazhdan:BialgebrasI}
(albeit only when the Lie bialgebras in question are cocommutative).

\begin{definition} The product $\#$ of $\calA^w(S;S)$ induces a new
product, also denoted $\glosm{wjail}{\#}$, on $\calA^w(S)$. If
$D_1$ and $D_2$ are in $\calA^w(S)$, set
\begin{equation} \label{eq:FProduct}
  D_1\#D_2\coloneqq(\delta(D_1)\#\delta(D_2))\act\delta^{-1}.
\end{equation}
\end{definition}

\Needspace{1in}
{
\makeatletter\def\thm@space@setup{%
  \thm@preskip=0cm plus 0cm minus 0cm %\thm@postskip=\thm@preskip
}\makeatother
\parpic[r]{\input{figs/jail.pstex_t}}
\begin{comment}
With Comment~\ref{com:delta} in mind, we see that if $D_1$ and $D_2$ are
sorted as in Comment~\ref{com:SortedForm}, then $D_1\#D_2$ is  ``heads of
$D_1$, then of $D_2$, then tails of $D_1$, then of $D_2$''
(with the last two parts interchangeable, by $TC$). The picture is nicer
when rotated, as on the right.
\end{comment}
}

  \parpic[l]{\scalebox{0.6}{%
    \,%
    \hspace{-6pt}}}%
  \noindent%
 See the comments following Discussion~\ref{disc:WhyTwo}.\newline

The next proposition shows how the operations of defined on the
$\calA^w(S)$-spaces in Definition~\ref{def:Operations}
can be written in terms of the ``head and tail'' operations of
Definition~\ref{def:AHTOperations}, thus completing the description of
the $E_s$ presentation.

\begin{proposition} \label{prop:dinht}
\begin{enumerate}[leftmargin=*,labelindent=0pt]

\item \label{it:HTsqcup} If $S_1$ and $S_2$ are disjoint and
$D_1\in\calA^w({S_1})$ and $D_2\in\calA^w({S_2})$,
then $\delta(D_1\sqcup D_2)=\delta(D_1)\sqcup\delta(D_2)$.

\item \label{it:HTStacking}
Let $D_1,D_2\in\calA^w(S)$. Then $\delta(D_1D_2)$ can be
written in terms of $\delta(D_1)$ and $\delta(D_2)$ using its description
in terms of $\sqcup$, $d\sigma$, and $dm$ in Equation~\eqref{eq:multiplem} and
using the formulas for $\sqcup$, $d\sigma$, and $dm$ that appear in
parts (\ref{it:HTsqcup}), (\ref{it:HTsigma}), and (\ref{it:HTdm}) of this
proposition.\footnotemarkC
\addtocounter{footnoteC}{-1}
\begin{multicols}{2}
\item $d\eta^a\act\delta = \delta\act h\eta^a\act t\eta^a$.

\item \label{it:HTdA}
  $\dA^a \act \delta = \delta \act hA^a \act tA^a\act tha^{aa}$.

\item \label{it:HTdS}
  $\dS^a \act \delta = \delta \act hS^a \act tS^a\act tha^{aa}$.

\item \label{it:HTdm}
  $dm^{ab}_c \act \delta = \delta \act tha^{ab} \act hm^{ab}_c \act
tm^{ab}_c$.\footnotemarkC

\item $d\Delta^a_{bc} \act \delta =
  \delta \act h\Delta^a_{bc} \act t\Delta^a_{bc}$.

\item \label{it:HTsigma} $d\sigma^a_b \act \delta = \delta \act
  h\sigma^a_b \act t\sigma^a_b$.

\end{multicols}
\end{enumerate}
\end{proposition}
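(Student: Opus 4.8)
The plan is to verify each identity on diagrams already in the sorted ``heads below tails'' form of Comment~\ref{com:SortedForm}; since every class in $\calA^w(S)$ has such a representative and both sides of each assertion are linear in $D$, this loses no generality. On such a diagram $\delta$ acts transparently (Comment~\ref{com:delta}): it merely relocates each arrow head of strand $a$ onto the head strand and each arrow tail onto the tail strand, without reordering anything. Thus for those operations that never force a head to cross a tail, the verification reduces to a one-line bookkeeping check that the ``head part'' and the ``tail part'' of the operation act independently.

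First I would dispatch the interleaving-free cases: $\sqcup$ (part~\ref{it:HTsqcup}), $d\eta^a$, $d\Delta^a_{bc}$, and $d\sigma^a_b$ (part~\ref{it:HTsigma}). For $\sqcup$, doubling-then-juxtaposing equals juxtaposing-then-doubling. For $d\eta^a$, deleting strand $a$ deletes both its head and tail strands, and the ``map to $0$ if an arrow touches $a$'' clause splits as ``zero if a head survives'' ($h\eta^a$) or ``a tail survives'' ($t\eta^a$). For $d\Delta^a_{bc}$ the sum over the $2^k$ ways of lifting the $k$ arrows at $a$ factors as (ways to lift the heads)$\,\times\,$(ways to lift the tails), i.e. $h\Delta^a_{bc}\act t\Delta^a_{bc}$. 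For $d\sigma^a_b$ the renaming simply relabels both the head and the tail strand. None of these moves a head past a tail, so the sorted form is preserved and $\delta$ commutes through as claimed.

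The crux is the stitching identity, part~\ref{it:HTdm}. Stitching strand $a$ below strand $b$ produces, reading upward along the new strand $c$, the block [heads of $a$, tails of $a$, heads of $b$, tails of $b$]; this is no longer sorted, since the heads of $b$ now sit above the tails of $a$. Re-sorting into ``all heads below all tails'' means sliding each head of $b$ down past the tails of $a$, and every such crossing contributes an $\aSTU_2$ correction. Recording these corrections is exactly the content of $tha^{ab}$ (double the head strand $b$ and slide a copy down the tail strand $a$), after which the heads may be combined by $hm^{ab}_c$ (which, like $dm$, places $a$ below $b$, matching the order above) and the tails by $tm^{ab}_c$. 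The detailed $\aSTU_2$ analysis identifying ``re-sorting'' with $tha$ is the head-on-tail sliding computation of~\cite{KBH}, to which I would refer for this one technical point. Part~\ref{it:HTStacking} then follows with no extra work: $\ast$ is built from $\sqcup$, $d\sigma$, and $dm$ via Equation~\eqref{eq:multiplem}, and $\delta$ has now been pushed through each of these.

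Finally, $\dA^a$ and $\dS^a$ (parts~\ref{it:HTdA} and~\ref{it:HTdS}) exhibit the same phenomenon as $dm$, but for a single strand acting against itself. Flipping strand $a$ turns the sorted block [heads, tails] into [tails, heads] (up to the orientation reversal of the strand), so re-sorting again slides the heads of $a$ down past the tails of $a$ --- the identical $\aSTU_2$ process, now recorded by $tha^{aa}$. The flip and its signs are independent of this reordering: for $\dA^a$ a sign $(-1)$ attaches to each head, which is $hA^a$ together with $tA^a=I$; for $\dS^a$ a sign attaches to each head and each tail, which is $hS^a=hA^a$ together with $tS^a$. Since the $tha^{aa}$ correction is insensitive to which sign convention the flip carries, both identities receive the same factor, completing the list. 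I expect the genuine obstacle to be precisely the stitching case: checking that the iterative $\aSTU_2$ corrections from sliding heads past tails assemble exactly into $tha^{ab}$ (and $tha^{aa}$) and not into some other combination. This is the only spot where an actual computation, rather than definition-tracing, is needed, and it is where I would lean on~\cite{KBH}.
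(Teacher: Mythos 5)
Your proposal is correct and follows essentially the same route as the paper: the easy items are definition-tracing on sorted representatives, and the only real content is in $\dA^a$, $\dS^a$, and $dm^{ab}_c$, where the interleaved block must be re-sorted into ``heads below tails'' and the re-sorting is exactly $tha^{aa}$ (resp.\ $tha^{ab}$) --- the paper phrases this via $\delta^{-1}$ and the schematic $(ha\,ta\,hb\,tb)\to(ha\,hb\,ta\,tb)\to((ha\,hb)(ta\,tb))$, which is your argument read in the other direction. The detailed identification of the iterated $\aSTU_2$ corrections with $tha$ that you defer to~\cite{KBH} is likewise where the paper leans on its earlier Definition-Proposition~\ref{dp:EsOps}.
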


\footnotetextC{As a sample for the whole proposition, we quote
the implementation of $dm$ and verify its meta-associativity
$dm^{ab}_a \act dm^{ac}_a = dm^{bc}_b \act dm^{ab}_a$
(compare~\cite[Equation~\eqref{KBH-eq:assoc}]{KBH}). We then include our
implementation of the stacking product (item {\it(\ref{it:HTStacking})}
above) without further explanations:
\ACQuote{Esdm}

\shortdialoginclude{metaassoc}
\ACQuote{EsNCM}
}

\begin{proof} The only difficulty is with items
{\it(\ref{it:HTdA})}--{\it(\ref{it:HTdm})}. Item~{\it(\ref{it:HTdA})}
is easier to understand in the form $\delta^{-1}\act \dA^a= hA^a \act
tA^a\act tha^{aa}\act\delta^{-1}$. Indeed, $\delta^{-1}$ plants heads
ahead of tails on strand $a$. Applying $\dA^a$ reverses that strand
(and adds some signs). This reversal can be achieved by reversing the
head part (with signs), then the tail part (with signs), and then
by swapping the two parts across each other. The first reversal
is $hA^a$, the second is $tA^a$, and the swap is $tha^{aa}$
followed by $\delta^{-1}$. Item~{\it(\ref{it:HTdS})} is proven
in exactly the same way, and item~{\it(\ref{it:HTdm})} is proven
in a similar way, where the right hand side traces the schematics
$(ha\,ta\,hb\,tb)\xrightarrow{tha}(ha\,hb\,ta\,tb)\xrightarrow{hm\act
tm}((ha\,hb)(ta\,tb))$.  \qed
\end{proof}

\begin{discussion} \label{disc:coalg} It is easy to verify
that $\delta\colon \calA^w(S) \to \calA^w(S;\,S)$
is a co-algebra morphism,
and hence it restricts to an isomorphism $\delta\colon
{\calA^w_{\exp}(S)} \to \calA^w_{\exp}(S;\,S)$. Therefore
$E_s\act\delta^{-1}$ is a bijection between $\TW_s(S)\coloneqq\TW_s(S;S)$ and
$\calA^w_{\exp}(S)$. Proposition~\ref{prop:dinht} now tells us
how to write all the ``$d$'' operations of Definition~\ref{def:Operations}
as compositions of ``$h$'' and ``$t$'' operations, and
Definition-Proposition~\ref{dp:EsOps} tells us how to write these as
operations on $\TW_s(H;T)$ (the $H$ and $T$ label sets that occur here
are always $S$ with one or two labels
added or removed). Hence overall $E_s\act\delta^{-1}$, acting on $\TW_s(S)$,
is a complete presentation of $\calA^w_{\exp}(S)$.
\end{discussion}

\Needspace{5cm}
\vskip 1mm
\makeatletter\def\thm@space@setup{%
  \thm@preskip=0cm plus 0cm minus 0cm %\thm@postskip=\thm@preskip
}\makeatother
\parpic[r]{\parbox{1.75in}{
  \centering{\input{figs/Ef.pstex_t}}
  \begin{figcap} \label{fig:Ef} $E_f(\lambda;\,\omega)_s$. \end{figcap}
}}
\begin{definition} The ``factored'' presentation $\glosm{Ef}{E_f}$
of $\calA^w_{\exp}$ is the composition ${E_f\coloneqq
E_s\act\delta^{-1}}$. Namely, for a set $S$ of strands, we define
$
  E_f\colon\TW_s(S)
    \overset{\sim}{\longrightarrow} \calA^w_{\exp}(S)
$
by
$(\lambda;\,\omega)_s\mapsto
  E_s(\lambda;\,\omega)_s\act\delta^{-1}
  = \exp_\#\left(l\lambda + \iota\omega\right)
$. See the illustration on the right.
\end{definition}

\draftcut
\subsection{Converting between the $E_l$ and the $E_f$ presentations.}
\label{subsec:Conversion}

We now have two presentations for elements of $\calA^w_{\exp}(S)$,
and we wish to be able to convert between the two. This turns out to
involve the maps $\Gamma$ and $\Lambda$ of Propositions~\ref{prop:Gamma}
and~\ref{prop:Lambda}.

\begin{definition} Define a pair of inverse maps
$\glosm{TWGamma}{\Gamma}\colon\TW_l(S)\to\TW_s(S)$ and
$\glosm{TWLambda}{\Lambda}\colon\TW_s(S)\to\TW_l(S)$ by
\[ \Gamma\colon(\lambda;\,\omega)_l \mapsto (\Gamma(\lambda);\,\omega)_s
  \quad\text{and}\quad
  \Lambda\colon(\lambda;\,\omega)_s \mapsto (\Lambda(\lambda);\,\omega)_l.
\]
\end{definition}

\begin{theorem} \label{thm:GammaLambda}
The left-most triangle in Figure~\ref{fig:diagram} commutes. Namely,
\begin{equation} \label{eq:convertion}
  E_l = \Gamma \act E_f
  \quad\text{and}\quad
  E_f = \Lambda \act E_l.
\end{equation}
(All other parts of Figure~\ref{fig:diagram} commute by definition).
\end{theorem}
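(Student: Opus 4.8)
The plan is to prove just the first identity, $E_l = \Gamma \act E_f$; the second then comes for free. Indeed, $\Gamma$ and $\Lambda$ are inverse bijections of $\TW(S)$ (this is built into their definition and follows from Propositions~\ref{prop:Gamma} and~\ref{prop:Lambda}, which give $e^{-\partial_\lambda}=C^{\Gamma(\lambda)}$ and $C^\mu=e^{-\partial_{\Lambda(\mu)}}$), and $E_l,E_f$ are bijections, so $E_l=\Gamma\act E_f$ is equivalent to $E_f=\Lambda\act E_l$. Unwinding the composition convention, the goal becomes the pointwise statement that $E_l(\lambda;\omega)_l = E_f(\Gamma(\lambda);\omega)_s$ for every $(\lambda;\omega)$; that is, the transition map $E_f^{-1}\act E_l$ from the $l$-coordinates to the $s$-coordinates is $\Gamma$, acting as $\lambda\mapsto\Gamma(\lambda)$ on trees and as the identity on wheels.

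Since $E_l$ is the exponential presentation built from the stacking product $\ast$ and $E_f(\mu;\omega)_s=\exp_{\#}(l\mu+\iota\omega)$ is built from $\#$, the identity is at bottom a comparison of these two products, and the only place they differ is the extra $tha$ in the stacking/$dm$ formula of Proposition~\ref{prop:dinht}\,(\ref{it:HTdm}), namely $dm^{ab}_c\act\delta=\delta\act tha^{ab}\act hm^{ab}_c\act tm^{ab}_c$ (the $\#$-product uses $hm$ and $tm$ but no $tha$). Concretely I would run a one-parameter argument: realize $E_l(\lambda;\omega)_l$ as the value at $t=1$ of a path whose $E_l$-tree coordinate is $t\lambda$, push the path through $\delta$ and $E_s^{-1}$, and write it as $E_f(\mu(t);\nu(t))_s$ with $(\mu(0);\nu(0))=(0;\omega)$. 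Differentiating in $t$ and using Proposition~\ref{prop:dinht} converts the infinitesimal right stacking by $\lambda$ into an infinitesimal $\#$-multiplication together with one application of $tha$ per strand. Feeding in the $tha$-formula~\eqref{eq:Estha}, $(\lambda;\omega)_s\act tha^{ux}=(\lambda\act RC_u^{\lambda_x};(\omega+J_u(\lambda_x))\act RC_u^{\lambda_x})_s$, together with the strandwise-$\BCH$, additive-on-wheels product law~\eqref{eq:EsProduct}, produces a coupled system of ordinary differential equations for $(\mu(t);\nu(t))$.

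The tree half of that system should be exactly Equation~\eqref{eq:GammaODE} defining $\Gamma_t(\lambda)$. Here the strandwise $\BCH$-law contributes the inverse-differential-of-$\exp$ factor $\tfrac{\ad\Gamma_t(\lambda)}{e^{\ad\Gamma_t(\lambda)}-1}$, the $RC_u$-conjugation from $tha$ supplies the transport of the generator $\lambda$, and rewriting that transport by Proposition~\ref{prop:Gamma} (so that $e^{-t\partial_\lambda}=C^{\Gamma_t(\lambda)}$) yields the factor $\lambda\act e^{-t\partial_\lambda}$ appearing in~\eqref{eq:GammaODE}. By uniqueness of solutions of~\eqref{eq:GammaODE} this forces $\mu(t)=\Gamma_t(\lambda)$, hence $\mu(1)=\Gamma(\lambda)$, settling the tree part of the claim.

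The hard part will be the wheel half, i.e.\ showing that the wheel coordinate ends at the asserted $\omega$ with no residual $\lambda$-dependence. The $tha$ step feeds the wheel channel the functionals $J_u(\cdot)$ of Definition~\ref{def:J}, so a priori $\nu(t)$ could accumulate a nontrivial tree-generated wheel; proving that these contributions are exactly accounted for is the delicate bookkeeping point. The clean way to do this is to recognize the accumulated wheel as the Alekseev--Torossian logarithmic Jacobian $j$ of Definition~\ref{def:j} and to invoke its cocycle identity $j(\BCH_{tb}(\lambda_1,\lambda_2))=j(\lambda_1)+e^{\partial_{\lambda_1}}j(\lambda_2)$ together with the initial condition $\tfrac{\partial}{\partial\epsilon}j(\epsilon\lambda)=\atdiv\lambda$; these are precisely what is needed to integrate the $J_u$-terms and match them against the way wheels are planted in the two presentations, leaving the wheel coordinate equal to $\omega$. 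Verifying that this cancellation is exact — that the conventions in $E_l$ and in $e_s$ are calibrated so that no leftover $j(\lambda)$ survives — is the real obstacle, and it is encouraging that the footnoted computer checks of~\eqref{eq:GammaODE}--\eqref{eq:Gamma} and~\eqref{eq:LambdaODE}--\eqref{eq:Lambda} already confirm the correspondence to high degree.
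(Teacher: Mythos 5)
Your reduction of the second identity to the first is fine, and your instinct that the whole theorem comes down to comparing the two products, with the extra $tha$ in Proposition~\ref{prop:dinht}\,(\ref{it:HTdm}) as the sole source of discrepancy, is sound. But as written the proposal has a genuine gap, and you name it yourself: the wheel half of your ODE system is never closed. Each application of $tha^{ux}$ feeds a $J_u(\lambda_x)$ term into the wheel channel, and you say that verifying the exact cancellation of these contributions ``is the real obstacle'' --- that verification is precisely the content you would need to supply, and the cocycle identity for $j$ is only gestured at, not applied. Until the wheel coordinate is shown to end at exactly $\omega$, the transition map $E_f^{-1}\act E_l$ has not been identified with $\Gamma$. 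There is also a soft circularity in the tree half: you invoke Proposition~\ref{prop:Gamma} in order to recognize your ODE as~\eqref{eq:GammaODE} and then invoke uniqueness to conclude $\mu(t)=\Gamma_t(\lambda)$; this can probably be untangled, but it needs care.

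The paper takes a different route that makes the wheel bookkeeping evaporate. It first establishes that an element $D\in\calA^w_{\exp}(S)$ is determined by three invariants: the degree-one framing part $\pi_A(D)$, the conjugation action $C_D$ on $\FL(S)$ embedded in $\calA^w(S\sqcup\{\infty\})$, and the cap projection $\pi_{\!\downcap}(D)$ into $\calS(\CW(S))$, which is exactly the invariant that records wheel content. Since the $\omega$ parts of the two presentations match on the nose, the claim reduces to showing that the $\ast$-exponential of $l\lambda$ equals the $\#$-exponential of $l\Gamma(\lambda)$; then $\pi_A$ of both sides vanishes by a degree-one check, $\pi_{\!\downcap}$ of both sides vanishes immediately by the $CP$ relation --- this is where your ``real obstacle'' disappears, because the invariant that would detect any accumulated $J_u$-wheels kills both sides outright --- and the conjugation invariants are $e^{-\partial_\lambda}$ and $C^{\Gamma(\lambda)}$ respectively, which agree by Proposition~\ref{prop:Gamma} used as a black box rather than re-derived inside $\calA^w$. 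If you wish to salvage your route you must actually integrate the $J_u$ contributions against the cocycle identity for $j$; the paper's decomposition shows this labor is avoidable.
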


Before we can prove this theorem we need a few preliminaries. For an
element $D\in{\calA^w_{\exp}(S)}$, we can define three associated
quantities:

\begin{myitemize}

\item The projection of $D$ to the degree 1 part of $\calA^w(S)$, and
especially, the projection $\glosm{piA}{\pi_A}(D)$ of the degree $1$
part to its ``framing'' part $A_S$ (consisting of self-arrows,
that begin and end on the same strand and point, say, up).

\item A conjugation automorphism $C_D$ of $\FL(S)$, defined as follows.
First, embed $\FL(S)$ into $\calA^w({S\sqcup\{\infty\}})$
by mapping any generator $a\in S$ to a degree 1 diagram in
${\calA^w({S\sqcup\{\infty\}})}$, the arrow whose tail is on
strand $a$ and whose head is on the new ``$\infty$'' strand and extending
in a bracket-preserving way, using the commutator of the stacking product
as the bracket on $\calA^w({S\sqcup\{\infty\}})$. Then note that
$\FL(S)\subset\calA^w({S\sqcup\{\infty\}})$ is invariant under
conjugation by $D$ and let $C_D$ denote this conjugation action.

\vskip 1mm
  \parpic[l]{%
  \hspace{-6pt}}%
  \noindent%
 This is a direct analog of the Artin action of the pure braid
groups $\PuB_n$ /
$\PwB_n$ on the free group $\FG(n)$.

\item $\glosm{pidowncap}{\pi_{\!\downcap}}(D)$ is the result of
adding a bullet at the bottom of every strand of $D$, in the
same sense as in Section~\ref{sssec:Family}.  Equivalently,
$\pi_{\!\downcap}=\delta\act\prod_{a\in S}h\eta^a$ is the composition
of $\delta$ with ``delete all head strands''. The target space of
$\pi_{\!\downcap}$ is $\calA^w(\emptyset;S)$, which is the symmetric algebra
$\calS(\CW(S))$ generated by wheels.

\end{myitemize}

\begin{proposition} $D$ is determined by the above three quantities
$\pi_A(D)$, $C_D$, and $\pi_{\!\downcap}(D)$.
\end{proposition}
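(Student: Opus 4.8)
The plan is to invoke the bijection $E_f\colon\TW_s(S)\to\calA^w_{\exp}(S)$ (Definition of $E_f$, together with Discussion~\ref{disc:coalg}) to write $D=E_f(\lambda;\,\omega)_s$ for a unique pair $(\lambda;\,\omega)\in\FL(S)^S\times\CW(S)$. Since $E_f$ is a bijection, it suffices to recover $\lambda$ and $\omega$ separately from the three given quantities; I will assign $\omega$ to $\pi_{\!\downcap}(D)$, the class of $\lambda$ modulo framing to $C_D$, and the framing part of $\lambda$ to $\pi_A(D)$.

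First I would recover the wheels $\omega$ from $\pi_{\!\downcap}(D)$. Using $E_f=E_s\act\delta^{-1}$ together with the stated identity $\pi_{\!\downcap}=\delta\act\prod_{a\in S}h\eta^a$, we get $\pi_{\!\downcap}(D)=E_s(\lambda;\,\omega)_s\act\prod_{a\in S}h\eta^a$. By Equation~\eqref{eq:EshEta} each $h\eta^a$ merely deletes the tree component $\lambda_a$, so $\pi_{\!\downcap}(D)=E_s(\emptyset;\,\omega)_s=\exp_\#(\omega)$, an element of the symmetric algebra $\calS(\CW(S))$ whose logarithm is the primitive $\omega$. Thus $\omega$ is determined by $\pi_{\!\downcap}(D)$ and, in particular, is independent of $\lambda$.

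Next I would recover $\lambda$ modulo the framing subalgebra from the conjugation automorphism $C_D$. The key point, which I would prove by inspecting the $\aSTU$ and $TC$ relations, is that conjugating the generating arrow $g_a$ (tail on $a$, head on $\infty$) by $D$ sees neither the wheel part nor the self-arrow (framing) part of $D$: wheels carry only tails and hence commute with $g_a$ by $TC$, and a framing self-arrow on strand $a$ contributes trivially to the conjugation of $g_a$ (consistently with the fact that $e^{\ad\lambda_a}a$ is unchanged by adding a scalar multiple of $a$ to $\lambda_a$). Hence $C_D$ depends only on the image $\partial_\lambda\in\tder_S$ of the trees, and indeed agrees with the conjugation automorphism $C^\lambda$ of Definition~\ref{def:CRC} — the free-Lie analogue of the Artin action flagged in the excerpt. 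Since the map $\tder_S\to\TAut$ is injective, $C_D$ determines $\partial_\lambda$, i.e.\ the class of $\lambda$ modulo $\ker\partial=A_S$ in the short exact sequence~\eqref{eq:FLisAtder}.

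Finally I would recover the remaining $A_S$-component of $\lambda$ from $\pi_A(D)$. The subalgebra $A_S$ consists of sequences with $\lambda_a$ a scalar multiple of $a$, so it is concentrated in degree $1$ and maps under $E_f$ precisely to the self-arrows of $D$; these self-arrows receive no contribution from the wheel part (whose arrow-heads all sit at internal trivalent vertices) nor from the off-diagonal degree-one trees, so $\pi_A(D)$ extracts exactly the $A_S$-component $\sum_a\langle\lambda_a,a\rangle a$. Combining this with the $\tder_S$-datum of the previous paragraph through the canonical splitting $\FL(S)^S\simeq A_S\oplus\tder_S$ reconstructs $\lambda$, and together with $\omega$ this reconstructs $(\lambda;\,\omega)_s$, hence $D$. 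The main obstacle is the middle step: one must justify carefully that $C_D$ equals $C^\lambda$ and that framing and wheels drop out of the conjugation. I expect the cleanest route is a diagrammatic computation of how the trees of $D$ slide past $g_a$ via $\aSTU_2$, reduced where possible to the Artin-action computation already carried out in~\cite{KBH} rather than redone from scratch.
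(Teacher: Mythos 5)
Your overall architecture --- wheels from $\pi_{\!\downcap}(D)$, trees modulo framing from $C_D$, framing from $\pi_A(D)$ --- is the same as the paper's, and your first and third steps are fine. But your middle step contains a genuine error, and it is exactly the subtlety that Propositions~\ref{prop:Gamma} and~\ref{prop:Lambda} exist to address. Having chosen the $E_f$ presentation, the conjugation automorphism you obtain is the basis-conjugating automorphism $C^\lambda$, not $e^{\pm\partial_\lambda}$, and $C^\lambda$ is \emph{not} a function of $\lambda$ modulo $A_S$: your parenthetical claim that $e^{\ad\lambda_a}a$ is unchanged by replacing $\lambda_a$ with $\lambda_a+ca$ is false, since $\ad(ca)$ kills $a$ but not $[\lambda_a,a]$ --- the discrepancy $\tfrac{c}{2}[a,[\lambda_a,a]]$ already appears in the quadratic term of the exponential and is nonzero whenever $\lambda_a\notin\langle a\rangle$. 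Consequently $C_D=C^\lambda$ neither factors through nor determines $\partial_\lambda$; what it determines, by Proposition~\ref{prop:Lambda}, is $e^{-\partial_{\Lambda(\lambda)}}$, i.e.\ $\Lambda(\lambda)$ modulo $A_S$, so your appeal to the injectivity of $\tder_S\to\TAut$ is applied to the wrong element. The clean decoupling you want does hold, but in the $E_l$ presentation, which is the route the paper takes: writing $D=e^{l\lambda}e^{\iota\omega}$ with the stacking product, conjugating the generator $g_a$ (the arrow from strand $a$ to $\infty$) by $e^{l\lambda}$ literally produces $e^{\pm\partial_\lambda}$ --- the $\aSTU_2$ correction coming from a self-arrow on strand $a$ is the image of $[a,a]=0$ --- so $C_D$ honestly lies in $\exp(\tder_S)$, where $\exp$ is injective, and determines $\lambda$ modulo $A_S$, while $\pi_A(D)$ supplies the missing degree-one diagonal.

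If you insist on staying with $E_f$, the argument can be repaired: $C_D=C^\lambda=e^{-\partial_{\Lambda(\lambda)}}$ determines $\Lambda(\lambda)$ modulo $A_S$; since $A_S$ is concentrated in degree one and $\Lambda$ is the identity to leading order, the only missing datum is the degree-one diagonal of $\Lambda(\lambda)$, which coincides with that of $\lambda$ and is read off from $\pi_A(D)$; then $\lambda=\Gamma(\Lambda(\lambda))$ is recovered in full. Either way the recovery of $\lambda$ from $C_D$ and $\pi_A(D)$ is joint (degree by degree) rather than the direct-sum split you describe. Your recovery of $\omega$ via the $h\eta$ formula, and your $TC$ argument for why wheels do not contribute to $C_D$, are both correct.
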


\begin{proof} As in Section~\ref{subsec:AT}, every
$D\in\calA^w_{\exp}(S)$ can be written uniquely in the
form $D=e^{l\lambda} e^{\iota\omega}$, where $\lambda\in\FL(S)^S$
and $\omega\in\CW(S)$. One may easily verify that $\pi_{\!\downcap}(D)$ is
$\omega$, that $C_D$ is the exponential of the derivation in $\tder_S$
corresponding to $\lambda$, and that $\pi_A(D)$ determines the part of
$\lambda$ lost by the projection $\FL(S)^S\to\tder_S$.\qed
\end{proof}

\noindent{\em Proof of Theorem~\ref{thm:GammaLambda}.} For $\lambda\in\FL(S)^S$ let $\lambda'=\Gamma(\lambda)$.
Comparing Figures~\ref{fig:El} and~\ref{fig:Ef}, we find that the $\omega$
parts drop out and we need to prove, schematically, that in
$\calA^w_{\exp}(S)$,
\[ \input{figs/ElEf.pstex_t} \]
A simple degree 1 calculation shows that $\pi_A(A)=\pi_A(B)=0$.
The CP relation of Section~\ref{sssec:Family} shows that
$\pi_{\!\downcap}(A)=\pi_{\!\downcap}(B)=0$. Finally, it is easy to verify that
$C_A=e^{-\partial_\lambda}$ while $C_B=C^{\lambda'}$, and hence $C_A=C_B$
follows from Proposition~\ref{prop:Gamma}.\qed

\draftcut
\section{Some Computations} \label{sec:Computations}

\subsection{Tangle Invariants} \label{subsec:TangleInvariants}

\subsubsection{The General Framework}
Recall from~\cite{WKO2} that the assignment
$\glosm{Zw}{Z^w}\colon\glosm{overcrossing}{\overcrossing} \mapsto
\exp(\glosm{rightarrowdiagram}{\rightarrowdiagram})
\glosm{virtualcrossing}{\virtualcrossing}$ defined on
$S$-component tangles and taking values in $\calA_{\exp}^w(S)$
(where $\rightarrowdiagram$ denotes an arrow connecting the upper strand
to the lower strand and exponentiation is in a formal sense) defines an
invariant of tangles with values in $\calA_{\exp}^w(S)$. We'd
like to compute $Z^w$ (more precisely, its logarithm), in as much as
possible, using both the $\TW_l(S)$-valued \cite{AT}-presentation $E_l$
or using the $\TW_s(S)$-valued factored presentation $E_f$ (recall
Figure~\ref{fig:diagram}).

We let $\glosm{Rl}{R_l^+}(a,b)$ and $\glosm{Rs}{R_s^+}(a,b)$ denote the value
$\glosm{Rab}{R(a,b)} = Z^w\left(\underset{a\ b}{\overcrossing}\right)$
of the positive crossing in $\TW_l$ and $\TW_s$, respectively, and
similarly, let $R_l^-(a,b)$ and $R_s^-(a,b)$ denote the value
$R^{-1}(a,b)=Z^w\left(\underset{b\
a}{\glosm{undercrossing}{\undercrossing}}\right)$ of the negative crossing
in $\TW_l$ and $\TW_s$, respectively (for both signs we label the upper
strand $a$ and the lower strand $b$). That is,
\[
  Z^w\left(\underset{a\ b}{\overcrossing}\right)
    = R^+_l(a,b)\act E_l = R^+_s(a,b)\act E_s
\qquad\text{and}\qquad
  Z^w\left(\underset{b\ a}{\undercrossing}\right)
    = R^-_l(a,b)\act E_l = R^-_s(a,b)\act E_s.
\]
One may easily verify that $R^\pm_{l,s}(a,b)=(a\to 0,\,b\to\pm
a;\,0)_{l,s}$\footnoteC{In computer talk, this is
\mathinclude{RDefs}
}, and it is a simple exercise to verify that $R$ satisfies the Yang-Baxter
/ Reidemeister 3 relation $R_{l,s}^+(1,2)\ast R^+_{l,s}(1,3)\ast
R^+_{l,s}(2,3) = R^+_{l,s}(2,3)\ast R^+_{l,s}(1,3)\ast
R_{l,s}^+(1,2)$\footnoteC{
Indeed, here's a computer verification in $E_l$, to degree 5:

\shortdialoginclude{R3}
}.

\subsubsection{The Knot $8_{17}$ and the Borromean Tangle} In this short
section we evaluate $Z^w$ on the knot $8_{17}$ and on the Borromean tangle,
both shown in Figure~\ref{fig:817AndBorromean}. An expanded version of this
section appears as \cite[Sections \ref{KBH-subsec:Demo1} and
\ref{KBH-subsec:Demo2}]{KBH}.

\begin{figure}
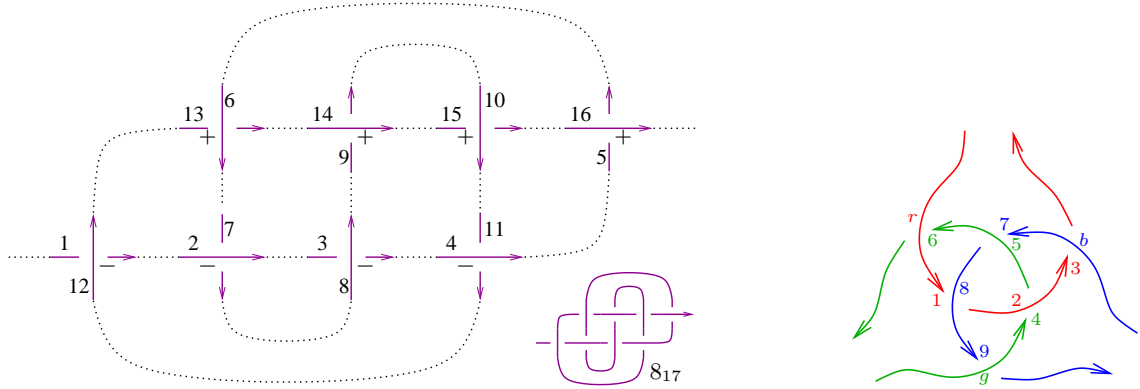

\[ \input{figs/817.pstex_t}\hspace{0.8in}\input{figs/BorromeanTangle.pstex_t} \]
\caption{The knot $8_{17}$ and the Borromean tangle.} \label{fig:817AndBorromean}
\end{figure}

For the 8-crossing knot $8_{17}$ we need to take 8 copies of $R^\pm_s$ with
strands labelled 1 through 16 as in Figure~\ref{fig:817AndBorromean}, and
then stitch strands 1 to 2, 2 to 3, etc\footnoteC{Here it is, to degree 6:

\dialogincludewithlink{817}
}.
This is done using $dm$ operations, and hence we cannot use the $E_l$
presentation.

Similarly for the 6-crossings Borromean tangle we need 6 copies of
$R^\pm_s$ followed by some stitching\footnotemarkC. A colourful
evaluation of the Borromean tangle appears in \cite[Section
\ref{KBH-subsec:Demo2}]{KBH}.

\footnotetextC{To degree 4, we get

\dialogincludewithlink{Borromean}
}

\draftcut
\subsection{Solutions of the Kashiwara-Vergne Equations}
\label{subsec:SolKV} In \cite[Section \ref{2-subsec:wTFo}]{WKO2} we
found that in order to construct a homomorphic expansion $Z^w$ for the
class $\wTFo$ of orientable w-tangled foams, defined there, we need to
find elements $\glosm{V}{V} =
Z^w(\glosm{PlusVertex}{\raisebox{-1mm}{\begin{picture}(0,0)%
\includegraphics{figs/PlusVertex.pstex}%
\end{picture}%
%
%  pstex_opts: -m 0.8 
%
\setlength{\unitlength}{3947sp}%
\begingroup\makeatletter\ifx\SetFigFont\undefined%
\gdef\SetFigFont#1#2#3#4#5{%
  \reset@font\fontsize{#1}{#2pt}%
  \fontfamily{#3}\fontseries{#4}\fontshape{#5}%
  \selectfont}%
\fi\endgroup%
\begin{picture}(211,211)(3439,-1160)
\end{picture}%
}}) \in
\calA^w_{\exp}({x,y})$\footnotemarkC\ and $\glosm{Cap}{\Cap}
= Z^w(\glosm{upcap}{{\raisebox{-1mm}{}}})
\in \calA^w_{\exp}({\raisebox{-1mm}{}}_x)$\footnotemarkT~\footnotemarkC\ that are
required to satisfy the three equations in \eqref{eq:HardR4}
and \eqref{eq:UnzipEquations} below. Recall from \cite[Section
\ref{2-subsec:EqWithAT}]{WKO2} that these equations are equivalent
to equations considered by Alekseev and Torossian in~\cite{AT}
(see \cite[Equation \ref{2-eq:ATKVEqns}]{WKO2} and \cite[Section
5.3]{AT}), and that the latter equations were shown in \cite[Section
5.2]{AT} to be equivalent to the Kashiwara-Vergne equations
of~\cite{KashiwaraVergne:Conjecture}.
\footnotetextT{$\Cap$ is called $C$ in~\cite{WKO2} and we trust
that the other minor notational differences with~\cite{WKO2} will
cause no difficulty to the reader. Note that $\calA^w({\raisebox{-1mm}{}}_S)$ is
$\calA^w(S)$ with $CP$ relations imposed at the tops of the
strands; compare with Section~\ref{sssec:Family}.}
\addtocounter{footnoteC}{-1}
\footnotetextC{For computations, we use the $E_s$ presentation
for $V$. As $V$ is presented in $\TW_s(\{x,y\})$, it is of the form
$V=((x\to\alpha,\,y\to\beta);\,\gamma)_s$, where $\alpha,\beta\in\FL(x,y)$
and $\gamma\in\CW(x,y)$, and where the coefficients of $\alpha$, $\beta$,
and $\gamma$, what we call the $\alpha$s, the $\beta$s, and the $\gamma$s,
will be determined later. The first line below sets $\alpha$, $\beta$,
and $\gamma$ to be series with yet-unknown coefficients, and the second
line sets $V$ to be the appropriate combination of $\alpha$, $\beta$,
and $\gamma$:
\mathinclude{VSetup}
(for a technical reason, in computations we use the symbol ${\mathtt
V}_{\mathtt 0}$ to denote $V$).
}
\addtocounter{footnoteC}{1}
\footnotetextC{Similarly, $\Cap$ is presented in $\TW_s(x)$. As it is made
only of wheels, its tree part is $0$, or the Lie series \verb$LS[0]$. The
wheels part of $\Cap$ is a series $\kappa\in\CW(x)$ whose
coefficients are the yet-unknown $\kappa$s:
\shortmathinclude{CapSetup}
}

The purpose of this section is to trace through all that at the level
of actual computations. Let us start by recalling from~\cite{WKO2}
the equations for $V$ and for $\Cap$. The first of those is the $R4$
equation \cite[\eqref{2-eq:HardR4}]{WKO2}, $V^{12}\glosm{R}{R}^{(12)3} =
R^{23}R^{13}V^{12}$, coming from the picture
\[ \input{figs/R4ToEquation.pstex_t}. \]
In the language of this paper, and denoting the three strands $x$, $y$, and
$z$, this equation becomes
\begin{equation}\label{eq:HardR4}
  V\ast (R(x,z)\act d\Delta^x_{xy}) = R(y,z)\ast R(x,z)\ast V\footnotemarkC
\end{equation}

The second and the third, ``unitarity'' and the ``cap equation'',
\cite[\eqref{2-eq:unitarity}]{WKO2} and \cite[\eqref{2-eq:CapEqn}]{WKO2},
are the equations
\addtocounter{footnoteC}{-1}
\begin{equation} \label{eq:UnzipEquations}
  V\ast(V\act\dA) = 1\quad\text{in }\calA^w({x,y})
  \quad\text{and}\quad
  V\ast(\Cap\act d\Delta^x_{xy}) = \Cap(\Cap\act d\sigma^x_y)
  \quad\text{in }
  \calA^w(\raisebox{-1mm}{}_{x,y}),\footnotemarkC
\end{equation}
which come from the two unzip operations,
\[ {
  \def\u{{\glosm{unzip}{u}}}
  \def\yu{{\yellowm{u}}}
  \input{figs/Unzips.pstex_t}.
} \]

\footnotetextC{\label{footC:VCapEqns} The three equations in~\eqref{eq:HardR4}
and~\eqref{eq:UnzipEquations} are coded as follows:
\mathinclude{VCapEqns}
}

Solving Equations~\eqref{eq:HardR4} and~\eqref{eq:UnzipEquations} degree
by degree with the initial condition $\alpha=-y/2+\ldots$ we find that
one possible solution, given in the factored presentation, is
\begin{multline*} \small
  V=E_f\left(
      x\to -\frac{\ob{xy}}{24}
        + \frac{7\ob{x\ob{x\ob{xy}}}}{5760} - \frac{7\ob{x\ob{\ob{xy}y}}}{5760}
        + \frac{\ob{\ob{\ob{xy}y}y}}{1440} +\ldots,
        \right. \\
      y\to \frac{\ob{x}}{2} - \frac{\ob{xy}}{12}
        + \frac{\ob{x\ob{x\ob{xy}}}}{5760} + \frac{\ob{x\ob{\ob{xy}y}}}{720}
        - \frac{\ob{\ob{\ob{xy}y}y}}{720} +\ldots; \\
    \left. \vphantom{\frac{\ob{x\ob{x\ob{xy}}}}{720}}
    - \frac{\wideparen{xy}}{48} + \frac{\wideparen{xxxy}}{2880}
      + \frac{\wideparen{xxyy}}{2880} + \frac{\wideparen{xyxy}}{5760}
      + \frac{\wideparen{xyyy}}{2880} + \ldots
  \right)_{\!\!s},
\end{multline*}
and $\small\Cap = - \wideparen{xx}/96 + \wideparen{xxxx}/11,520
- \wideparen{xxxxxx}/725,760 + \ldots$\footnoteC{We set the initial
condition for $\alpha$ in degree 1, then
declare that $\alpha$, $\beta$, $\gamma$, and $\kappa$ are the series which
solve equations \verb$R4Eqn$, \verb$UnitarityEqn$, and \verb$CapEqn$, and
then print the values of $V$ and $\kappa$ (note
the $\hbar^{-1}$ that comes with \verb$R4Eqn$ --- it indicates a degree
shift --- \verb$R4Eqn$ in degree $k$ only puts conditions on our unknowns
at degree $k-1$):

\dialogincludewithlink{VCapSolution}

The solutions of~\eqref{eq:HardR4} and~\eqref{eq:UnzipEquations} are
not unique, and hence occasionally \verb$SeriesSolve$ encounters a
coefficient whose value is not determined by the equations. When this
happens its default action is to set the missing coefficient to $0$. In
the computation this happened to the coefficient of $\wideparen{x}$
in $\kappa$ and to the coefficient of $\ob{\ob{xy}y}$ in $\alpha$.
}.
Note that according
to~\cite{WKO3}, $\Cap$ is always $\sum a_n\wideparen{x^n}$, where
$\sum a_n\hbar^n = \frac14\log\left( \frac{\hbar/2}{\sinh\hbar/2}
\right)$\footnoteC{Indeed, the series below matches with the computation
of $\kappa$, above.

\shortdialoginclude{Sinh}
}.

We can also write $V$ in the lower-interlaced presentation:
\begin{multline*} \small
  V=E_l\left(
      x\to -\frac{\ob{xy}}{24} + \frac{\ob{x\ob{xy}}}{96}
        + \frac{\ob{x\ob{x\ob{xy}}}}{2880} - \frac{\ob{x\ob{\ob{xy}y}}}{480}
        + \frac{\ob{\ob{\ob{xy}y}y}}{1440} +\ldots,
        \right. \\
      y\to \frac{\ob{x}}{2} - \frac{\ob{xy}}{12} + \frac{\ob{x\ob{xy}}}{96}
        + \frac{\ob{x\ob{x\ob{xy}}}}{960} - \frac{\ob{x\ob{\ob{xy}y}}}{320}
        + \frac{\ob{\ob{\ob{xy}y}y}}{720} +\ldots;
    \\ \left. \vphantom{\frac{\ob{x\ob{x\ob{xy}}}}{720}}
    - \frac{\wideparen{xy}}{48} + \frac{\wideparen{xxxy}}{2880}
      + \frac{\wideparen{xxyy}}{2880} + \frac{\wideparen{xyxy}}{5760}
      + \frac{\wideparen{xyyy}}{2880} + \ldots
  \right)_{\!\!s},\footnotemarkC
\end{multline*}
($\Cap$ is the same in both presentations).

\footnotetextC{We could re-compute $V$ in $E_l$ by making some simple
modifications to the input lines in~\ref{footC:VCapEqns}, but it is easier
to use our tools and convert between the two presentations:

\shortdialoginclude{LambdaV}
}

Recall from \cite[Section~\ref{2-subsec:EqWithAT}]{WKO2} and from
Comment~\ref{com:ElAT} that the tree part of ``our'' $V$, taken in
the lower-interlaced presentation, is $\log F^{21}$, where $\glosm{F}{F}$ is
the solution of ``generalized KV problem'' of \cite[Section~5.3]{AT}
and where the superscript $21$ means ``interchange the role of $x$
and $y$''. Thus using the notation of \cite{AT} a solution to degree 4 of the
generalized KV problem is\footnotemarkC
\[ \small
  \log F \!=\! \left(\!
    \frac{\ob{y}}{2} \!+\! \frac{\ob{xy}}{12} \!+\! \frac{\ob{\ob{xy}y}}{96}
      \!-\! \frac{\ob{x\ob{x\ob{xy}}}}{720}
      \!+\! \frac{\ob{x\ob{\ob{xy}y}}}{320}
      \!-\!\frac{\ob{\ob{\ob{xy}y}y}}{960},\,
    \frac{\ob{xy}}{24} \!+\! \frac{\ob{\ob{xy}y}}{96}
      \!-\! \frac{\ob{x\ob{x\ob{xy}}}}{1440}
      \!+\! \frac{\ob{x\ob{\ob{xy}y}}}{480}
      \!-\! \frac{\ob{\ob{\ob{xy}y}y}}{2880}
  \right).
\]

\footnotetextC{The more authoritative version, of course, is the one printed
directly by the computer:

\shortdialoginclude{logF}
}

Next, we'd like to compute a solution of the original Kashiwara-Vergne
equations of~\cite{KashiwaraVergne:Conjecture}. These are the two equations
below, written for unknowns $\glosm{fg}{f,g}\in\FL(x,y)$:
\begin{equation} \label{eq:KV1}
  x + y - \log e^ye^x = (1-e^{-\ad x})f + (e^{\ad y}-1)g,
\end{equation}
\begin{equation} \label{eq:KV2}
  \atdiv_xf + \atdiv_y g
  = \frac12\tr_u\left(\left(
    \frac{\ad x}{e^{\ad x}-1} + \frac{\ad x}{e^{\ad x}-1}
    - \frac{\ad \BCH(x,y)}{e^{\ad \BCH(x,y)}-1}
  \right)(u)\right).
\end{equation}

By tracing the definitions of the comparison map $\kappa$ which appears in
\cite[Theorem~5.8]{AT}, we find that a solution $(f,g)$ of the
Kashiwara-Vergne equations can be computed from $\log F$ via the formula
\[ (f,g) = \frac{e^{\ad(\log F)}-1}{\ad(\log F)}(\calE(\log F)), \]
where $\glosm{calE}{\calE}$ denotes the Euler operator, which multiplies every
homogeneous element by its degree. To degree 4, we find\footnotemarkC\ that
\[ \small
  (f,g) \!=\! \left(\!
    \frac{\ob{y}}{2} \!+\! \frac{\ob{xy}}{6} \!+\! \frac{\ob{\ob{xy}y}}{24}
      \!-\! \frac{\ob{x\ob{x\ob{xy}}}}{180}
      \!+\! \frac{\ob{x\ob{\ob{xy}y}}}{80}
      \!+\!\frac{\ob{\ob{\ob{xy}y}y}}{360},\,
    \frac{\ob{xy}}{12} \!+\! \frac{\ob{\ob{xy}y}}{24}
      \!-\! \frac{\ob{x\ob{x\ob{xy}}}}{360}
      \!+\! \frac{\ob{x\ob{\ob{xy}y}}}{120}
      \!+\! \frac{\ob{\ob{\ob{xy}y}y}}{180}
  \right).
\]

\footnotetextC{With higher authority:

\shortdialoginclude{atkv}

We can then verify that $(f,g)$ indeed satisfy Equations~\eqref{eq:KV1}
and~\eqref{eq:KV2}, at least to degree 9:

\dialoginclude{KVTest}

Of course, we could have simply solved Equations~\eqref{eq:KV1}
and~\eqref{eq:KV2} directly:

\dialogincludewithlink{KVDirect}

(To the degree shown, the results are the same. But starting at degree 8
they diverge as the solutions are non-unique.)
}

\draftcut
\subsection{The involution $\tau$ and the Twist Equation} \label{subsec:tau}
Alekseev and Torossian~\cite[Section 8.2]{AT} construct an
involution $\glosm{tau}{\tau}$ on the set $\SolKV$ of solutions of the
Kashiwara-Vergne equations. Phrased using the language
of~\cite{WKO2}, Alekseev and Torossian define a map
$\tau\colon\calA^w(\uparrow_2)\to\calA^w(\uparrow_2)$ by
$\tau(V)\coloneqq R(1,2)V^{21}\Theta^{-1/2}$,
where $\glosm{Theta}{\Theta}^s=e^{st}$ and
$t=\rightarrowdiagram+\leftarrowdiagram\in\calA^w(\uparrow_2)$. They
then prove that $\tau$ restricts to an involution of the set of solutions
Equations~\eqref{eq:HardR4} and~\eqref{eq:UnzipEquations}. It is not known
if $\tau$ is different from the identity; in other words, it is not known
if every $V$ satisfying~\eqref{eq:HardR4} and~\eqref{eq:UnzipEquations}
also satisfies the ``Twist Equation''
\begin{equation} \label{eq:Twist} V=\tau(V). \end{equation}

  \parpic[l]{%
  \hspace{-6pt}}%
  \noindent%
 In topology, the Twist Equation is essential for the
compatibility between $\glosm{Zu}{Z^u}$ and $Z^w$;
see~\cite[Section~4.7]{WKO2}. So it is not known if ``every $Z^w$ is
compatible with some $Z^u$''.

Below the dark line we verify that to degree 6, ``our'' $V$ satisfies
the Twist Equation~\eqref{eq:Twist}\footnoteC{We define $\Theta${\tt
l[x,y,s]} to be $e^{st}$ in the $E_l$ presentation in a straightforward
manner, then convert it to the $E_s$ presentation, and then print its
value in both the $E_l$ and $E_s$ presentations:

\dialoginclude{Theta}

This done, the computation of $\tau(V_0)$ and the verification that it
is equal to $V_0$ to degree 6 s routine:

\shortdialoginclude{Vtau}
}.

Following that, we reproduce the results of Albert, Harinck, and
Torossian~\cite{AlbertHarinckTorossian:KV78}, who studied the
linearizations
\begin{equation} \label{eq:LinearizedKV}
  [x,A]+[y,B] = 0
  \qquad\text{and}\qquad
  \atdiv_xA+\atdiv_yB = 0
  \qquad\text{with }A,B\in\FL(x,y)
\end{equation}
of Equations~\eqref{eq:KV1} and~\eqref{eq:KV2} (which are
equivalent to~\eqref{eq:HardR4} and~\eqref{eq:UnzipEquations}), and the
linearization of Equation~\eqref{eq:Twist},
\begin{equation} \label{eq:LinearizedTwist}
  A(x,y) = B(y,x).
\end{equation}

We find\footnotemarkC\ that up to degree 16, the dimensions
of the spaces of solutions of~\eqref{eq:LinearizedKV} and of
\eqref{eq:LinearizedKV}$\wedge$\eqref{eq:LinearizedTwist} are the same
and are given by the following table:

\footnotetextC{We solve for series $A$ and $B$
satisfying~\eqref{eq:LinearizedKV}. These equations are linear, so the
printed solution is $0$. Yet we store messages produced by {\tt
LinearSolve} in a stream called {\tt msgs}. As {\tt LinearSolve}
progresses, it outputs messages detailing which coefficients were set in an
arbitrary manner in each degree, and the dimension of the space of
solutions in each degree can be read from that information:

\dialoginclude{Linearized}

Next, we read the stream {\tt msgs}, just to explore its format:

\shortdialoginclude{msgs}

Next we compute $A$ to degree 12, and read only the dimensions information
contained in {\tt msgs}:

\shortdialogincludewithlink{dims}

Finally we do the same, but now adding Equation~\eqref{eq:LinearizedTwist}:

\dialogincludewithlink{dims1}
}

\begin{equation} \label{tab:dims}
\begin{tabular}{|c|c|c|c|c|c|c|c|c|c|c|c|c|c|c|c|c|}
\hline
$\deg A,B$ & 1 & 2 & 3 & 4 & 5 & 6 & 7 & 8 & 9 & 10 & 11 & 12 & 13 & 14 & 15 & 16 \\
\hline
dimension  & 1 & 0 & 0 & 0 & 0 & 0 & 0 & 1 & 0 &  1 &  1 &  2 &  2 &  3 &  3 &  5 \\
\hline
\end{tabular}
\end{equation}

Assuming that every solution of the KV equations to degree $k$
can be extended to a solution at all degrees (and similarly for
KV$\wedge$Twist)\footnoteT{I am not aware that this was ever proven for
KV (and/or KV$\wedge$Twist), yet a similar result holds for Drinfel'd
associators; see~\cite{Drinfeld:QuasiHopf, Drinfeld:GalQQ, Bar-Natan:NAT,
Bar-Natan:Associators}.}, the above table shows the number of degrees of
freedom for the solutions of KV (and/or KV$\wedge$Twist), in each degree.

\draftcut
\subsection{Drinfel'd Associators} \label{subsec:Associators}
It pains me to say so little
about Drinfel'd associators, but this is a computational paper and
everything we need about associators was already said elsewhere; e.g.,
in Drinfel'd's original papers \cite{Drinfeld:QuasiHopf, Drinfeld:GalQQ},
in my \cite{Bar-Natan:NAT, Bar-Natan:Associators}, and in earlier papers in
this series \cite{WKO2,WKO3}. Hence here I will only recall the few things
that are necessary in order to understand the computations below.

Recall that the Drinfel'd-Kohno algebra $\glosm{fraktn}{\frakt_n}$
is the completed graded Lie algebra with degree $1$ generators
$\{\glosm{tij}{t_{ij}} = t_{ji}\colon 1\leq i\neq j\leq n\}$ and relations
$[t_{ij},t_{kl}]=0$ when $i,j,k,l$ are distinct (``locality relations'')
and $[t_{ij}+t_{ik},t_{jk}]=0$ when $i,j,k$ are distinct (``4T
relations'')\footnoteC{We verify these relations, using obvious notation:

\shortdialoginclude{4T}}. For any fixed $2\leq k\leq n$ the $k-1$
elements $\{t_{ik}\colon 1\leq i<k\}$ form a free subalgebra $\FL_{k-1}$
of $\frakt_n$, and $\frakt_n$ is an iterated semi-direct product of
these subalgebras:
\[ \frakt_n \cong
  ((\ldots(\FL_1\ltimes\FL_2)\ltimes\ldots)\ltimes\FL_{n-2})\ltimes\FL_{n-1}.
\]
Hence as a vector space, $\frakt_n$ has a basis with elements ordered
pairs $(k,w)$, where $2\leq k\leq n$ and $w$ is a Lyndon word in the
letters $\{1,\ldots,k-1\}$ (which really stand for
$\{t_{1k},\ldots,t_{k-1,k}\}$)\footnoteC{Hence for example,
$[t_{13},t_{12}]=-[t_{13},t_{23}]$ (the bracket of a generator of $\FL_3$
with the generator of $\FL_2$ is an element of $\FL_3$). In computer speak,
this is

\shortdialoginclude{DKExample}

Note that the head \verb$DK$ represents ``a basis element in a
Drinfel'd-Kohno algebra'', and that the Lyndon word $12$ becomes
$[t_{13},t_{23}]$ when interpreted in $\FL_3\subset\frakt_3$.

We could make the last output a bit friendlier by turning it into a
``Drinfel'd-Kohno Series'' (\verb$DKS$):

\shortdialoginclude{DKSExample}
}. 

The collection $\{\frakt_n\}$ of all Drinfel'd-Kohno algebras forms an
``operad'' (e.g.~\cite{Fresse:OperadsAndGT}). We only need to mention a
part of that structure here: that for any $n$ and $m$, there are many maps
$\frakt_n\to\frakt_m$. Namely, whenever $\left\{s_i\right\}_{i=1}^n$ is a
collection of disjoint subsets of $\{1,\ldots,m\}$ (some of which may be
empty), we have a morphism of Lie algebras
$\Psi\mapsto\Psi^{s_1,\ldots,s_n}$ mapping $\frakt_n$ to $\frakt_m$,
and defined by its values on the generators of $\frakt_n$ as follows:
\[ \left(t_{ij}\right)^{s_1,\ldots,s_n} \coloneqq
  \sum_{\alpha\in s_i,\,\beta\in s_j} t_{\alpha\beta}.\footnotemarkC
\]

\footnotetextC{As an example we repeat a single evaluation of a map
$\frakt_4\to\frakt_9$ twice. First using a complete and somewhat cumbersome
notation, and then using a shortened notation that works only if all
indices are single-digit:

\shortdialoginclude{sigmaExample}
}

Note also that by regarding elements of $\frakt_n$ as formal
exponentials and using the BCH product each $\frakt_n$ also acquires a
(non-commutative) group structure.\footnoteC{For example, in $\frakt_3$ the
elements $t_{12}$ and $t_{23}$ do not commute, and hence the product
$e^{t_{12}/2}e^{t_{23}/2}$ is messy. Yet by a 4T relation the elements
$t_{12}$ and $(t_{12})^{12,3}=t_{13}+t_{23}$ do commute, and hence the
product $e^{t_{12}/2}\left(e^{t_{12}/2}\right)^{12,3}$ is much simpler:

\shortdialoginclude{BCH4DK}
}
By convention, when we think of $\frakt_n$ as a group, we refer to it as
``$\glosm{exptn}{\exp\frakt_n}$''.

We are finally in position to recall the definition of a
Drinfel'd associator. With $R=e^{t_{12}/2}\in\exp\frakt_2$, a
Drinfel'd associator is an element $\glosm{Phi}{\Phi}\in\exp\frakt_3$ which
satisfies the ``unitarity condition''~\eqref{eq:PhiUnitarity},
the pentagon equation~\eqref{eq:PhiPentagon}, and the hexagon
equations~\eqref{eq:PhiHexagons}:

\begin{align}
  \text{Unitarity}&\colon& \Phi^{321}
    &= \Phi^{-1}, \label{eq:PhiUnitarity} \\
  \pentagon&\colon& \Phi\cdot\Phi^{1,23,4}\cdot\Phi^{2,3,4}
    &= \Phi^{12,3,4}\cdot\Phi^{1,2,34}, \label{eq:PhiPentagon} \\
  \hexagon_\pm&\colon& (R^{\pm 1})^{12,3}
    &= \Phi\cdot(R^{\pm 1})^{2,3}\cdot(\Phi^{-1})^{1,3,2}\cdot
       (R^{\pm 1})^{1,3}\cdot\Phi^{3,1,2}. \label{eq:PhiHexagons}
\end{align}

A surprising result by Furusho~\cite{Furusho:Pentagon} (see
also~\cite{Bar-NatanDancso:Furusho}) states that in the context of
$\exp\frakt_n$ the hexagon equations follow from unitarity and the
pentagon, provided $\Phi$ is initialized to degree $2$ by
$\Phi=\exp\left([t_{13},t_{23}]/24+\text{higher
terms}\right)$.\footnoteC{Here's an associator $\Phi_0$, computed to
degree $6$. The data file~\citeweb{Phi.nb} contains a computation
of an associator to degree 10, higher than was previously
computed~\cite{Bar-Natan:NAT, Brochier:DrinfeldAssociators}.

\dialogincludewithlink{Phi}

To be on the safe side, we verify that $\Phi_0$ satisfies the hexagon
equations to degree $6$:

\dialoginclude{Hexagons}}

\draftcut
\parpic[r]{\input{figs/PhiV.pstex_t}}
\subsection{Associators in $\calA^w$} \label{subsec:wAssociators}
We know from~\cite[Section~1]{AT}
that a certain combination of four copies of $V$ makes a solution
of the pentagon equation, with values in $\tder_3$. In the language
of~\cite{WKO2}, this is the statement that $V$ is the $Z^w$-value
of a vertex, that four vertices can make a tetrahedron, and that the
$Z^w$-value $\glosm{PhiV}{\Phi_V}$ of a tetrahedron is an associator in
$\calA^w$ (see the figure on the right). Specifically,
\[ \Phi_V = (V\act\dA)^{12,3}(V\act\dA)^{1,2}V^{2,3}V^{1,23},\footnotemarkC \]
where we use standard notation: $V^{2,3}$, for example, means ``$V$ with
its $x$ strand renamed $2$ and its $y$ strand renamed $3$'' and $V^{1,23}$
means ``V with its $x$ strand renamed $1$ and its $y$ strand doubled to
become strands $2$ and $3$''. With the language of
Definition~\ref{def:Operations}, this is $V^{2,3}=V\act d\sigma^x_2\act
d\sigma^y_3$ and $V^{1,23} = V\act d\sigma^x_1\act d\Delta^y_{23}$.

\footnotetextC{And here is $\Phi_V$, to degree 4:

\shortdialoginclude{PhiV}}

$\Phi_V$ satisfies the pentagon equation.\footnoteC{Indeed,

\shortdialoginclude{PentPhiV}}
If our $V$ also satisfies the Twist Equation, then $\Phi_V$ also satisfies
the hexagon equations (though we do not test that here). Finally, Alekseev
and Torossian~\cite{AT} prove that if the tree part of $\Phi_V$ is written
as an exponential $\exp(l\phi)$ of an element $\phi$ of $\tder_3$,
then in fact $\phi\in\sder_3$, where as in~\cite{AT},
$\glosm{sder}{\sder}_n$ is the space of ``special derivations in
$\tder_n$'', the derivations which annihilate the sum of all
generators on $\FL_n$\footnotemarkC.

\footnotetextC{We convert $\Phi_V$ to the $E_l$ presentation and take
its first (tree) part and call it $\phi$, and then we verify that
$[x_1,\phi_1] + [x_2,\phi_2] + [x_3,\phi_3] = 0$:

\shortdialoginclude{Phi_is_sder}}

\vskip -2mm
\noindent\parbox[b]{\textwidth-2.0in}{
  \parpic[l]{%
  \hspace{-6pt}}%
  \noindent%
 The topological meaning of ``$\phi\in\sder_3$'' is that one
may perform a sequence of four $R4$ moves to slide a strand underneath
a tetrahedron, as shown on the right.
}\hfill\begin{picture}(0,0)%
\includegraphics{figs/Phisder.pstex}%
\end{picture}%
%
%  pstex_opts: -m 0.7 
%
\setlength{\unitlength}{2763sp}%
\begingroup\makeatletter\ifx\SetFigFont\undefined%
\gdef\SetFigFont#1#2#3#4#5{%
  \reset@font\fontsize{#1}{#2pt}%
  \fontfamily{#3}\fontseries{#4}\fontshape{#5}%
  \selectfont}%
\fi\endgroup%
\begin{picture}(3024,1074)(1639,-373)
\put(3151,164){\makebox(0,0)[b]{\smash{{\SetFigFont{8}{9.6}{\rmdefault}{\mddefault}{\updefault}{\color[rgb]{0,0,0}$=$}%
}}}}
\end{picture}%

\vskip 2mm

Recall that there is a map
$\glosm{alpha}{\alpha}\colon\frakt_n\to\calA_{\text{prim}}^w(\uparrow_n)$
(equivalently, $\alpha\colon\calU(\frakt_n)\to\calA^w(\uparrow_n)$),
defined by its values on the generators by sending $t_{ij}$ to a sum of a
single arrow from strand $i$ to strand $j$ plus a single arrow from strand
$j$ to strand $i$: $t_{ij}\mapsto\tensor[_i]{\rightarrowdiagram}{_j}
+ \tensor[_i]{\leftarrowdiagram}{_j}$. Using the map $\alpha$, every
Drinfel'd associator becomes an associator in $\calA^w$.\footnoteC{Indeed,
we define a map {\tt DK2Es} which takes Drinfel'd-Kohno series to elements
of $\calA^w$ given in the $E_s$ presentation by applying the built-in
$\alpha${\tt Map}, adding $0$ wheels, and applying the $E_l$ to $E_s$
conversion $\Gamma$. Applying this map to the Drinfel'd associator
$\Phi_0$ computed before, we get and associators in $\calA^w$:

\shortdialoginclude{DK2Es}

The result matches $\Phi_V$, computed before, to the degree shown. But this
is only because both associators are supported in even degrees, and there's
a unique even associator in $\calA^w$ up to degree $4$. In degree $8$
these two associators diverge.}

  \parpic[l]{%
  \hspace{-6pt}}%
  \noindent%
 In topology, $\alpha$ is the associated graded of the ``do
nothing'' map $\glosm{uva}{a}$ which maps ordinary knots to virtual knots.
$\horizontalchord\mapsto\rightarrowdiagram + \leftarrowdiagram$ because
$\horizontalchord\sim\doublepoint\sim\overcrossing-\undercrossing \mapsto
(\overcrossing-\virtualcrossing) + (\virtualcrossing-\undercrossing) \sim
\semivirtualover + \semivirtualunder \sim
\rightarrowdiagram + \leftarrowdiagram$. See
\cite[Section~\ref{1-subsubsec:RelWithu}]{WKO1} and
\cite[Section~\ref{2-subsec:sder}]{WKO2}.

  \parpic[l]{%
  \hspace{-6pt}}%
  \noindent%
 In Lie theory, the existence of $\alpha$ corresponds to the
fact that the invariant metric on $I\frakg=\frakg\ltimes\frakg^\ast$
(represented by an undirected chord) is the sum of the two possible
contractions of a space with its dual in
$(\frakg\ltimes\frakg^\ast)\otimes(\frakg\ltimes\frakg^\ast)$ (the two
arrows).

  \parpic[l]{%
  \hspace{-8pt}}%
  \noindent%
 The~\cite[Proposition~3.11]{AT} version of $\alpha$ is the
map $\frakt_n\to\atsder_n \subset \attder_n$ taking $t_{ij}$ to
$\partial\left(i\to x_j,\, j\to x_i,\, (k\neq i,j)\to 0\right)$.

\draftcut
\subsection{Solving the Kashiwara-Vergne Equations Using a Drinfel'd Associator}
\label{subsec:KVandAssoc} Following~\cite{WKO3} (in a deeper
sense, following~\cite{AlekseevEnriquezTorossian:ExplicitSolutions}),
we know that an element $V$ solving the KV equations \eqref{eq:HardR4}
and \eqref{eq:UnzipEquations} can be computed from a Drinfel'd associator
$\Phi$ by first computing the invariant $\glosm{ZB}{Z_B} = Z^u(B)$ 
of the ``buckle'' $\glosm{B}{B}$, shown below
both as a knotted trivalent graph and as a product of associators,
then puncturing strands 1 and 3 and capping strands 2 and 4 from below,
and then regarding the result in $\calA^w(\uparrow_2)$ by applying an
``Etingof-Kazhdan (EK) isomorphism'':\footnotemarkC
\[
  B = \begin{array}{c}\input{figs/Buckle.pstex_t}\end{array}
  \mapsto Z_B = (\Phi^{-1})^{13,2,4}\Phi^{1,3,2}R^{23}\Phi^{-1}\Phi^{12,3,4}
  \xrightarrow{\text{puncture, cap, EK}} V.
\]

\footnotetextC{We start with a straightforward computation of $Z_B$:

\shortdialoginclude{ZB}

In the $E_s$ presentation, ``puncture'' is $t\eta$. So we puncture strands
1 and 3:

\shortdialoginclude{VfromPhi}

At this point we would normally need to cap and apply EK. But fortunately,
strands 2 and 4 carry no arrow heads (as can be seen in the above output),
so there is no need to cap them and the EK isomorphisms act by doing
nothing. Hence apart from some obvious renaming, the above is already a
solution of the KV equations. It matches with the previously-computed $V$
to degree 4 but diverges from it in degree 8 (not shown here). This is
consistent with the result in~\eqref{tab:dims}, which shows that
non-uniqueness starts only in degree $8$.}

\Needspace{1in}
\parpic[r]{\input{figs/nu.pstex_t}}
Likewise following~\cite{WKO3}, we know that $\Cap=\alpha(\nu^{1/4})$, where
$\glosm{nu}{\nu}$ is the Kontsevich integral of the unknot, or the
inverse of the associator-combination shown on the right and given by
the formula $\alpha(\nu^{-1})=\Phi\act\alpha\act \dS^2\act dm^{32}_2\act
dm^{21}_1$.\footnotemarkC\ (Note that this computation uses the operation
$\dS^a$, which is not easily available in the $E_l$ presentation).

\footnotetextC{Indeed here is $\nu^{-1}$, followed by a verification that
$\nu^{-1}\Cap^4$ is trivial:

\shortdialoginclude{nu}

\shortdialoginclude{nucap4}}

\draftcut
% The "minipage" here is to prevent a very funny bad interaction of the
% section title with the "computations below" rule.
\noindent\begin{minipage}{\textwidth}
\subsection{A Potential $S_4$ Action on Solutions of KV}
\label{subsec:Trivolution} In~\cite{Bar-NatanDancso:KTG},
Z.~Dancso and I discussed how ``the expansion of a tetrahedron''
can be interpreted as an associator valued in the appropriate
space $\calA^u(\glosm{tetrahedron}{\tetrahedron})\cong\calA^u(\uparrow_3)$ (see
also~\cite{Thurston:Shadow}). The symmetry group of an oriented
tetrahedron is the alternating group $A_4$, and hence $A_4$
acts on the set of all associators in $\calA^u(\uparrow_3)$
(note that while the action of the permutation group $S_3$ on
$\calA^u(\uparrow_3)$ is obvious, its extension to an action of
$S_4$ is non-obvious and is best understood using the isomorphism
$\calA^u(\tetrahedron)\cong\calA^u(\uparrow_3)$). The unitarity
equation~\eqref{eq:PhiUnitarity} means that odd permutations map
associators to objects whose inverses are associators; with some abuse
of language we simply say that ``$S_4$ acts on the set of associators''
(really, it acts on ``associators and inverse-associators''). As there
are bi-directional relations between associators and solutions of the
KV equations, we can expect an action of $S_4$ on the set of solutions
of the KV equations and their inverses.
\end{minipage}

\vskip 1mm
As mathematicians, Z.~Dancso and I only lightly explored this
potential action of $S_4$; we wrote down what we think are the formulas
inherited from the action on associators, but on the formal level, we've
verified almost nothing. Yet computer experiments, described below,
suggest that our formulas are correct and that they have the properties
described below.

\noindent{\bf The first $\bbZ/2$ action} is the involution $\tau$ discussed
in Section~\ref{subsec:tau}. We have nothing further to add.

\noindent{\bf The second $\bbZ/2$ action} is the involution
$\glosm{rho2}{\rho_2}$ of $\calA^w$ which multiplies every degree $d$
element by $(-1)^d$. Solutions $V$ of the KV equations are not invariant
under $\rho_2$. Yet if $V_0$ is the solution computed in this paper
then $V_1\coloneqq R^{-1/2}V_0$ is invariant under $\rho_2$, at least
experimentally. Alternatively, $V_0$ is (experimentally) invariant under
$\rho_2'\coloneqq R\rho_2$.\footnoteC{Indeed,

\dialoginclude{rho2}
}

\noindent{\bf A $\bbZ/3$ action.} For $\xi\in\calA^w({x,y})$
let $\glosm{rho3}{\rho_3}(\xi)\coloneqq \xi\act \dS^y \act d\Delta^y_{yz} \act
dm^{xz}_x \act d\sigma^{xy}_{yx}$, where $d\sigma^{xy}_{yx}$ simply
means ``swap the labels $x$ and $y$''. Then $\rho_3$ is a trivolution
($(\rho_3)^3=1$)\footnoteC{Indeed for a random $\xi_c$,
$\xi_c\act\rho_3\act\rho_3\act\rho_3=\xi_c$:

\dialoginclude{rho3}
}, and a renormalized version of $V_0$, namely $V_2\coloneqq V_0 \ast
\Theta^{-1/4} \ast \exp\left(\frac{\wideparen{x}-\wideparen{y}}{12}\right)
\ast d\Delta^x_{xy}(\Cap^2)$ is, at least experimentally, invariant
under the action of $\rho_3$.\footnoteC{Indeed,

\begin{minipage}{\textwidth}\vskip 1mm\dialoginclude{V2}\end{minipage}}

\draftcut\section{Glossary of notation} \label{sec:glossary}

Icons, then Greek letters, then Latin, and then symbols:

\noindent
{\small\begin{multicols}{2}
  \parpic[l]{\,%
  \,\,
  \hspace{-6pt}}%
  \noindent%
 Links with topology, finite-dimensional Lie theory, and the
Alekseev-Torossian paper~\cite{AT}.

\parpic[l]{
  \imagetop{}%
  \,\,\imagetop{}%
  \,\,\imagetop{}%
  \hspace{-6pt}
}%
\noindent Human input, multi-line human input, and computer output.

\vskip 2mm
\parpic[l]{
  \imagetop{\includegraphics[width=6mm]{figs/NotebookIcon.eps}}%
    \raisebox{-2mm}{\href{\web/AwCalculus.m}{\tiny\tt FL}}%
  \,\,\imagetop{\includegraphics[width=6mm]{figs/NotebookIcon.eps}}%
    \raisebox{-2mm}{\href{\web/AwCalculus.m}{\tiny\tt AC}}%
  \hspace{-6pt}
}%
\noindent Source code quotes from the Mathematica packages
\href{\web/FreeLie.m}{\tt FreeLie.m} and \href{\web/AwCalculus.m}{\tt
AwCalculus.m} \cite{WKO4}.

\raggedright\begin{list}{}{
  \renewcommand{\makelabel}[1]{#1\hfil}
}
\item

% \alpha
\glosi{alpha}{$\alpha$}{a map $\frakt_n\to\calA_{\text{prim}}^w$ / $\calA^u\to\calA^w$}
% \beta

% \gamma
\glosi{TWGamma}{$\Gamma$}{the conversion $\TW_l\to\TW_s$}
\glosi{Gamma}{$\Gamma(\lambda)$}{$\Gamma_1(\lambda)$}
\glosi{Gammat}{$\Gamma_t(\lambda)$}{solution of $e^{-t\partial_\lambda}=C^{\Gamma_t(\lambda)}$}

% \delta
\glosi{Delta}{$\Delta$}{a co-product}
\glosi{delta}{$\delta$}{double all strands $\calA^w(S)\!\to\!\calA^w(S;S)$}

% \epsilon
% \zeta

% \eta
\glosi{eta}{$\eta$}{a co-unit}

% \theta
\glosi{Theta}{$\Theta$}{$\exp(\rightarrowdiagram+\leftarrowdiagram)$}

% \iota
\glosi{ElDef}{$\iota$}{the embedding $\CW\to\calA^w$}

% \kappa

% \lambda
\glosi{expectation}{$\lambda$}{generic element of $\FL(S)^S$}
\glosi{TWLambda}{$\Lambda$}{the conversion $\TW_s\to\TW_l$}
\glosi{Lambda}{$\Lambda(\lambda)$}{$\Lambda_1(\lambda)$}
\glosi{Lambdat}{$\Lambda_t(\lambda)$}{solution of $C^{t\lambda}=e^{-\partial_{\Lambda_t(\lambda)}}$}

% \mu

% \nu
\glosi{nu}{$\nu$}{Kontsevich integral of the unknot}

% \xi
% \omicron

% \pi
\glosi{piA}{$\pi_A$}{projection on ``framing part''}
\glosi{piT}{$\pi_T$}{projection on trees}
\glosi{pidowncap}{$\pi_{\!\downcap}$}{a projection on wheels}

% \rho
\glosi{rho2}{$\rho_2$}{an involution on $\calA^w$}
\glosi{rho3}{$\rho_3$}{a trivolution on $\calA^w(x,y)$}
% \sigma

% \tau
\glosi{tau}{$\tau$}{an involution on $\SolKV$}

% \upsilon

% \phi
\glosi{Phi}{$\Phi$}{a Drinfel'd associator}
\glosi{PhiV}{$\Phi_V$}{an associator in $\calA^w$}

% \chi
% \psi
% \omega
\glosi{expectation}{$\omega$}{generic element of $\CW(S)$}

\item

%a
\glosi{a}{$a,\bar{a},a_i,b,\ldots$}{generic strand labels}
\glosi{uva}{$a$}{the inclusion usual$\hookrightarrow$virtual}
\glosi{A}{$A$}{Abelian lie algebra}
\glosi{fraka}{$\fraka$}{\cite{AT} notation for $A$}
\glosi{adugamma}{$\ad_u^\gamma$}{a derivation on $\FL(T)$}
\glosi{wRels}{$\aAS$}{the directed AS relation}
\glosi{calAw}{$\calA^w$}{arrow-diagram spaces}
\glosi{calAwexp}{$\calA^w_{\exp}$}{exponentials in $\calA^w$}
\glosi{calAwHT}{$\calA^w(H;T)$}{arrow-diagram space on heads-tails skeleton}
\glosi{AwHTexp}{$\calA^w_{\exp}(H;T)$}{exponentials in $\calA^w(H;T)$}

%b
\glosi{B}{$B$}{the ``buckle'' KTG}
\glosi{BCH}{$\BCH$}{the Baker-Campbell-Hausdorff series}
\glosi{BCHtb}{$\BCH_{tb}$}{$\BCH$ relative to $tb$}

%c
\glosi{C}{$C^\lambda$}{conjugating generators by exponentials}
\glosi{Cap}{$\Cap$}{$Z^w$ of a knot-theoretic cap}
\glosi{CP}{$CP$}{the $CP$ relation}
\glosi{Cu}{$C_u^{\gamma}$}{$C^{(u\to\gamma)}$}
\glosi{CW}{$\CW$}{cyclic words}

%d
\glosi{D}{$D$}{a diagram in $\calA^w$}
\glosi{dDelta}{$d\Delta$}{strand doubling in $\calA^w(S)$}
\glosi{ldDelta}{$d\Delta$}{``strand doubling'' in $\TW_l$}
\glosi{deta}{$d\eta$}{strand deletion in $\calA^w(S)$}
\glosi{ldeta}{$d\eta$}{``strand deletion'' in $\TW_l$}
\glosi{dsigma}{$d\sigma$}{strand renaming in $\calA^w(S)$}
\glosi{ldsigma}{$d\sigma$}{``strand renaming'' in $\TW_l$}
\glosi{dA}{$\dA$}{strand adjoint in $\calA^w(S)$}
\glosi{ldA}{$\dA$, $dA^S$}{``strand adjoint'' in $\TW_l$}
\glosi{der}{$\der$}{derivations of $\FL$}
\glosi{atder}{$\atder$}{\cite{AT} notation for $\der$}
\glosi{atdiv}{$\atdiv$}{$\sum_u\atdiv_u$}
\glosi{atdivu}{$\atdiv_u$}{a ``self-action'' map $\FL(S)\to\CW(S)$}
\glosi{dm}{$dm$}{strand stitching in $\calA^w(S)$}
\glosi{dS}{$\dS$}{strand antipode in $\calA^w(S)$}
\glosi{ldS}{$\dS$, $dS^S$}{``strand antipode'' in $\TW_l$}

%e
\glosi{calE}{$\calE$}{the Euler operator}
\glosi{Ef}{$E_f$}{the factored presentation}
\glosi{El}{$E_l$}{the lower-interlaced presentation}
\glosi{Es}{$E_s$}{the split presentation}
\glosi{Eu}{$E_u$}{the upper-interlaced presentation}
\glosi{es}{$e_s$}{a map $\FL(T)^H\to\calA^w_{\exp}(H;T)$}
\glosi{exptn}{$\exp\frakt_n$}{the exponential group of $\frakt_n$}

%f
\glosi{F}{$F$}{solution of the generalized KV equations}
\glosi{fg}{$f,g$}{solution of the original KV equations}
\glosi{FL}{$\FL$}{free Lie algebra}

%g
\glosi{frakg}{$\frakg$}{a finite-dimensional Lie algebra}

%h
\glosi{H}{$H$}{a set of head labels}
\glosi{h}{$h_i$}{head labels}
\glosi{hdeg}{$h^{\deg}$}{degree-scaling}
\glosi{hDelta}{$h\Delta$}{head-strand doubling in $\calA^w(H;T)$}
\glosi{shDelta}{$h\Delta$}{``head-strand doubling'' in $\TW_s$}
\glosi{heta}{$h\eta$}{deleting a head-strand in $\calA^w(H;T)$}
\glosi{sheta}{$h\eta$}{``deleting a head-strand'' in $\TW_s$}
\glosi{hsigma}{$h\sigma$}{head-strand renaming in $\calA^w(H;T)$}
\glosi{shsigma}{$h\sigma$}{``head-strand renaming'' in $\TW_s$}
\glosi{hA}{$hA$}{head-strand adjoint in $\calA^w(H;T)$}
\glosi{shA}{$hA$}{``head-strand adjoint'' in $\TW_s$}
\glosi{hm}{$hm$}{head-strand stitching in $\calA^w(H;T)$}
\glosi{shm}{$hm$}{``head-strand stitching'' in $\TW_s$}
\glosi{hS}{$hS$}{head-strand antipode in $\calA^w(H;T)$}
\glosi{shS}{$hS$}{``head-strand antipode'' in $\TW_s$}

%i
\glosi{Ifrakg}{$I\frakg$}{$\frakg\ltimes\frakg^\ast$}
\glosi{wRels}{$\aIHX$}{the directed IHX relation}

%j
\glosi{j}{$j$}{a ``$\log$-Jacobian'' $\FL\to\CW$}
\glosi{Ju}{$J_u$}{a ``partial Jacobian'' $\FL\to\CW$}
%k

%l
\glosi{ElDef}{$l$}{the lower embedding $\FL(S)^S\to\calA^w$}
\glosi{lie}{$\lie$}{\cite{AT} notation for $\FL$}

%m
%n
%o

%p
\glosi{Aprimw}{$\calA_{\text{prim}}^w$}{the primitives in $\calA^w$}
\glosi{AprimwHT}{$\calA_{\text{prim}}^w(H;T)$}{the primitives in $\calA^w(H;T)$}

%q

%r
\glosi{R}{$R$}{$R(1,2)$}
\glosi{Rab}{$R^{\pm 1}(a,b)$}{$Z^w$ of a single $\pm$ crossing}
\glosi{Rl}{$R_l^\pm$}{$R^{\pm 1}$ in $\TW_l$}
\glosi{Rs}{$R_s^\pm$}{$R^{\pm 1}$ in $\TW_s$}
\glosi{RC}{$RC^{-\lambda}$}{inverse of $C^\lambda$}
\glosi{RCu}{$RC_u^{\gamma}$}{$RC^{(u\to\gamma)}$}

%s
\glosi{S}{$S$}{a set of strands}
\glosi{calS}{$\calS$}{a symmetric algebra}
\glosi{sder}{$\sder$}{``special'' derivations}
\glosi{wRels}{$\aSTU$}{a directed STU relation}

%t
\glosi{T}{$T$}{a set of tail labels}
\glosi{t}{$t_i$}{head labels}
\glosi{tij}{$t_{ij}$}{generators of $t_{ij}$}
\glosi{fraktn}{$\frakt_n$}{the Drinfel'd-Kohno algebra}
\glosi{tDelta}{$t\Delta$}{tail-strand doubling in $\calA^w(H;T)$}
\glosi{stDelta}{$t\Delta$}{``tail-strand doubling'' in $\TW_s$}
\glosi{teta}{$t\eta$}{deleting a tail-strand in $\calA^w(H;T)$}
\glosi{steta}{$t\eta$}{``deleting a tail-strand'' in $\TW_s$}
\glosi{tsigma}{$t\sigma$}{tail-strand renaming in $\calA^w(H;T)$}
\glosi{stsigma}{$t\sigma$}{``tail-strand renaming'' in $\TW_s$}
\glosi{tA}{$tA$}{tail-strand adjoint in $\calA^w(H;T)$}
\glosi{stA}{$tA$}{``tail-strand adjoint'' in $\TW_s$}
\glosi{TAut}{$\TAut$}{the exponential group of $\tder$}
\glosi{tb}{$tb$}{tangential bracket}
\glosi{wRels}{$TC$}{the tails-commute relation}
\glosi{tder}{$\tder$}{tangential derivations}
\glosi{attder}{$\attder$}{\cite{AT} notation for $\tder$}
\glosi{tha}{$tha$}{tail-head action in $\calA^w(H;T)$}
\glosi{stha}{$tha$}{``tail-head action'' in $\TW_s$}
\glosi{thm}{$thm$}{tail-head stitching in $\calA^w(H;T)$}
\glosi{tm}{$tm$}{tail-strand stitching in $\calA^w(H;T)$}
\glosi{stm}{$tm$}{``tail-strand stitching'' in $\TW_s$}
\glosi{tru}{$\tr_u$}{a trace map $\FL(S)\to\CW(S)$}
\glosi{attr}{$\attr$}{\cite{AT} notation for $\CW$}
\glosi{tS}{$tS$}{tail-strand antipode in $\calA^w(H;T)$}
\glosi{stS}{$tS$}{``tail-strand antipode'' in $\TW_s$}
\glosi{expectation}{$\TW$}{trees and wheels}
\glosi{TWl}{$\TW_l$}{domain of $E_l$}
\glosi{Es}{$\TW_s$}{domain of $E_s$}

%u
\glosi{u}{$u$}{the upper embedding $\FL(S)^S\to\calA^w$}
\glosi{unzip}{$u$}{unzip operations}
\glosi{uinT}{$u,v,w$}{tail labels}
\glosi{calU}{$\calU$}{universal enveloping algebra}

%v
\glosi{V}{$V$}{$Z^w$ of a knot-theoretic vertex}
%w

%x
\glosi{xinH}{$x,y,z$}{head labels}

%y

%z
\glosi{ZB}{$Z_B$}{$Z^u$ of the buckle $B$}
\glosi{Zu}{$Z^u$}{the $\calA^u$ counterpart of $Z^w$}
\glosi{Zw}{$Z^w$}{a (universal) $\calA^w_{\exp}$-valued invariant}

\item
\glosi{act}{$\act$}{postfix operator application, ``composition done right''}
\glosi{rightarrowdiagram}{$\rightarrowdiagram$}{a single-arrow diagram}
\glosi{ast}{$\ast$}{the stacking product in $\calA^w(S)$}
\glosi{last}{$\ast$}{the ``stacking product'' in $\TW_l$}
\glosi{jail}{$\#$}{the stacking product in $\calA^w(H;T)$}
\glosi{sjail}{$\#$}{the ``stacking product'' in $\TW_s$}
\glosi{wjail}{$\#$}{a product on $\calA^w(S)$}
\glosi{Box}{$\Box$}{the co-product in $\calA^w(S)$}
\glosi{htBox}{$\Box$}{the co-product in $\calA^w(H;T)$}
\glosi{hdeg}{${-1}^{\deg}$}{degree-scaling with $h=-1$}
\glosi{ob}{$\ob{xy}$}{top-bracket notation}
\glosi{partial}{$\partial$}{the map $\FL(S)^S\to\der_S$}
\glosi{ldeta}{$\remove$}{set minus, array key removal}
\glosi{sqcup}{$\sqcup$}{a disjoint union in $\calA^w(S)$}
\glosi{lsqcup}{$\sqcup$}{``disjoint union'' in $\TW_l$}
\glosi{ssqcup}{$\sqcup$}{``disjoint union'' in $\TW_s$}
\glosi{Brutesqcup}{$\sqcup$}{a union made disjoint}
\glosi{htsqcup}{$\sqcup$}{a disjoint union in $\calA^w(H;T)$}
\glosi{uparrow}{$\uparrow_n$}{a skeleton labelled $S=\{1,\ldots,n\}$}
\glosi{wideparen}{$\wideparen{uvw}$}{a cyclic word}
\glosi{parenl}{$(\lambda;\,\omega)_l$}{generic element in $\TW_l$}
\glosi{parens}{$(\lambda;\,\omega)_s$}{generic element in $\TW_s$}
\glosi{parenu}{$(\lambda;\,\omega)_u$}{element in the domain of $E_u$}
\glosi{tb}{$[\cdot,\cdot]_{tb}$}{tangential bracket}

\item
\glosi{overcrossing}{$\overcrossing$}{an over-crossing}
\glosi{undercrossing}{$\undercrossing$}{an under-crossing}
\glosi{virtualcrossing}{$\virtualcrossing$}{a ``virtual'' crossing}
\glosi{PlusVertex}{$\raisebox{-1mm}{}$}{the
  knot-theoretic ``vertex''}
\glosi{upcap}{${\raisebox{-1mm}{}}$}{a knot-theoretic ``cap''}
\glosi{tetrahedron}{$\tetrahedron$}{unknotted tetrahedron}

\end{list}
\end{multicols}}

\draftcut

\if\draft y
  \newpage
  Everything below is to be blanked out before the completion of this paper.
  
\begin{discussion} \label{disc:onlie} For use in the next section, note
that both $\calA^w(\uparrow_S)$ and $\calA(S;\,S)$ are associative algebras
(the former using the stacking product of Equation~\eqref{eq:TubeProduct} and
the latter using that of Equation~\eqref{eq:AHTStacking}), yet $\delta$ is
not multiplicative and hence it does not restrict to a Lie morphism on
primitives. Instead, on primitives
$(\lambda_1;\omega_1),(\lambda_2;\omega_2)\in\TW(S)$ we have

\begin{equation} \label{eq:BracketComparissons}
  \delta[l\lambda_1+\iota\omega_1,\,l\lambda_2+\iota\omega_2]
  = [\delta(l\lambda_1+\iota\omega_1),\,\delta(l\lambda_2+\iota\omega_2)]
  + e_s(\partial_{\lambda_1}\lambda_2;\,\partial_{\lambda_1}\omega_2)
  - e_s(\partial_{\lambda_2}\lambda_1;\,\partial_{\lambda_2}\omega_1).
\end{equation}
\end{discussion}

\noindent{\em Proof of Equation~\eqref{eq:BracketComparissons}.}

\begin{proof} $E_l$ and $E_f$ both plant wheels at the top, and as tails
commute, they do so in the same manner. So $\omega'=\omega$ and we only
need to show Equation~\eqref{eq:convertion} at tree level (meaning, modulo
wheels). We will show that for every scalar $t$,
\begin{equation} \label{eq:treelevel}
  \exp(l(t\lambda))=\exp_\#(e_s(\Gamma(\lambda,t)))\act\delta^{-1};
\end{equation}
the desired result is the specialization of Equation~\eqref{eq:treelevel}
to $t=1$. It is clear that Equation~\eqref{eq:treelevel} holds for some
unique $\Gamma_0=\{a\to\gamma_{0a}(t)\}$, that $\gamma_{0a}(0)=0$, and that
each coefficient of each $\gamma_{0a}(t)$ depends polynomialy on $t$, and
hence it is enough to show that $\Gamma_0$ satisfies the differential
equation in~\eqref{eq:GammaODE}.

MORE.

\end{proof}
 
  \section*{To Do}

{\bf Comments by Zsuzsi.}

The word ``certain'' is used too many times in the abstract. I think it
would sell better to be more specific there, even at the cost of a longer
abstract.

Similarly, I think you could make the first paragraph of the introduction
sound less enigmatic.

In the first topology paragraph on page 5, you could say ``for S, T, H some
sets'', or something along those lines to make it less cryptic?

In general I think the introduction and abstract should cater more to
readers not initiated, I think in general if someone doesn't understand
the introduction to a paper they are much less likely to bother reading
the rest. (Ps: I do love Figure 1.)

Occasionally I think it might make sense to group the math on one or two
full pages and then the corresponding computation on one or two full
pages.

{\bf {\tt FreeLie.m} to do.}
\FLQuote{ToDo}

\fi

\end{document}